\newcommand{\Sym}                     {\mathrm{S}}
\newcommand{\lie}[1]                  {\mathfrak{#1}}
\newcommand{\algebra}[1]      {\mathscr{#1}}
\DeclareMathOperator{\Pol}    {\mathrm{Pol}}
\newcommand{\I}              {\mathrm{i}}
\newcommand{\E}              {\mathrm{e}}
\newcommand{\Tensor}                  {\mathrm{T}}
\newcommand{\tensor}[1][{}]           {\mathbin{\otimes_{\scriptscriptstyle{#1}}}}
\DeclareMathOperator{\Symmetrizer}    {\mathscr{S}}
\DeclareMathOperator{\image} {\mathrm{im}}
\newcommand{\id}             {\mathsf{id}}
\DeclareMathOperator{\ad}     {\mathrm{ad}}
\newcommand{\argument}       {\,\cdot\,}
\newcommand{\At}[1]          {\Big|_{#1}}
\newcommand{\Bounded}[1][{}]   {\mathfrak{B}_{\scriptscriptstyle{#1}}}
\newcommand{\Unit}           {\mathbb{1}}
\newcommand{\ostar}{\star}
\newcommand{\ocoproduct}{\Delta}
\newcommand{\Eta}{\mathrm{H}}
\newcommand{\bch}[2]{\mathrm{BCH}\left(#1, #2\right)}
\newcommand{\bchpart}[3]{\mathrm{BCH}_{#1}\left(#2, #3\right)}
\newcommand{\bchparts}[4]{\mathrm{BCH}_{#1, #2}\left(#3, #4\right)}
\newcommand{\bchtilde}[4]{\widetilde{\mathrm{BCH}}_{#1, #2}\left(#3; #4\right)}
\newcommand\ot[2]{\stackrel{\mathclap{#1}}{#2}}
\newcommand{\refitem}[1] {\textit{\ref{#1}.)}}
\newtheorem{lemma}{Lemma}[section]
\newtheorem{proposition}[lemma]{Proposition}
\newtheorem{theorem}[lemma]{Theorem}
\newtheorem{corollary}[lemma]{Corollary}
\newtheorem{definition}[lemma]{Definition}
\newtheorem{example}[lemma]{Example}
\newtheorem{remark}[lemma]{Remark}
\newtheorem{question}[lemma]{Question}
\newtheorem{maintheorem}{Main Theorem}
\newenvironment{lemmalist}{\begin{compactenum}[\itshape i.)]}{\end{compactenum}}
\newenvironment{theoremlist}{\begin{compactenum}[\itshape i.)]}{\end{compactenum}}
\newenvironment{propositionlist}{\begin{compactenum}[\itshape i.)]}{\end{compactenum}}
\newenvironment{definitionlist}{\begin{compactenum}[\itshape i.)]}{\end{compactenum}}
\newenvironment{corollarylist}{\begin{compactenum}[\itshape i.)]}{\end{compactenum}}
\theoremstyle{nonumberplain}
\newtheorem{proof}{Proof}
\newtheorem{subproof}{Proof}
\title{Convergence of the Gutt Star Product}
\author{
  \textbf{Chiara Esposito$^1$}\thanks{\texttt{chiara.esposito@mathematik.uni-wuerzburg.de}},
  \addtocounter{footnote}{2}
  \textbf{Paul Stapor$^2$}\thanks{\texttt{paul.stapor@helmholtz-muenchen.de}},
  \addtocounter{footnote}{2}
  \textbf{Stefan Waldmann$^1$}\thanks{\texttt{stefan.waldmann@mathematik.uni-wuerzburg.de}}\\[0.5cm]
  \begin{minipage}{6cm}
      \centering
      ${}^1$Institut für Mathematik \\
      Lehrstuhl für Mathematik X \\
      Universität Würzburg \\
      Campus Hubland Nord \\
      Emil-Fischer-Straße 31 \\
      97074 Würzburg \\
      Germany
  \end{minipage}
  \begin{minipage}{8cm}
      \centering
      ${}^2$Helmholtz Zentrum München \\
      Deutsches Forschungszentrum für \\
      Gesundheit und Umwelt (GmbH) \\
      Institute of Computational Biology \\
      Ingolstädter Landstr. 1 \\
      85764 Neuherberg \\
      Germany
  \end{minipage}
}
\date{September 2015}
\begin{document}

%
%

\maketitle

%
%

\begin{abstract}
    In this work we consider the Gutt star product viewed as an
    associative deformation of the symmetric algebra
    $\Sym^\bullet(\lie{g})$ over a Lie algebra $\lie{g}$ and discuss
    its continuity properties: we establish a locally convex topology
    on $\Sym^\bullet(\lie{g})$ such that the Gutt star product becomes
    continuous. Here we have to assume a mild technical condition on
    $\lie{g}$: it has to be an \emph{Asymptotic Estimate} Lie
    algebra. This condition is e.g. fulfilled automatically for all
    finite-dimensional Lie algebras.  The resulting completion of the
    symmetric algebra can be described explicitly and yields not only
    a locally convex algebra but also the Hopf algebra structure maps
    inherited from the universal enveloping algebra are continuous.
    We show that all Hopf algebra structure maps depend analytically
    on the deformation parameter. The construction enjoys good
    functorial properties.
\end{abstract}

\newpage

%
%

\tableofcontents
\newpage

%
%

\section{Introduction}
\label{sec:Introduction}

Formal deformation quantization as introduced in
\cite{bayen.et.al:1978a} has reached a remarkable state where the
existence and the classification of formal star products is by now
understood very well: Kontsevich's formality theorem gives both, the
general existence of formal star products on Poisson manifolds as well
as their classification up to equivalence. In the symplectic case,
earlier results gave the existence
\cite{dewilde.lecomte:1983b,fedosov:1994a} as well as the
classification \cite{nest.tsygan:1995a, bertelson.cahen.gutt:1997a,
  deligne:1995a}.

Beside the symplectic situation, the linear Poisson structures,
i.e. the Kirillov-Kostant-Souriau bracket on the dual of a Lie algebra
$\lie{g}$, remained the only example which allowed a formal
deformation quantization for a long time: the existence of a star
product on $\lie{g}^*$ is contained in the construction of Gutt
\cite{gutt:1983a}. The basic idea of the construction is rather
simple. The algebra of polynomial functions on $\lie{g}^*$ is
isomorphic to the symmetric algebra $\Sym^\bullet(\lie{g})$ which is,
as filtered vector space, isomorphic to the universal enveloping
algebra $\algebra{U}(\lie{g})$ via the Poincaré-Birkhoff-Witt
theorem. Taking the canonical PBW isomorphism, i.e. the total
symmetrization map, one can pull-back the product of
$\algebra{U}(\lie{g})$ to obtain a product on $\Sym^\bullet(\lie{g})$
and hence on $\Pol^\bullet(\lie{g}^*)$. Taking into account the
degrees of the polynomials in the right way allows to plug in a
deformation parameter $z$ such that one ends up with a star product
$\star_z$ on $\lie{g}^*$. This star product has many remarkable
features, one of them is that it converges for trivial reasons on the
polynomial functions $\Pol^\bullet(\lie{g}^*)$ for all $z$. This is
clear by the very construction. From the quantization point of view,
$z$ is something like $\I\hbar$ with $\hbar$ being Planck's constant.

In general, formal star products are very hard to control from an
analytic point of view: not much is known about the convergence
properties of the formal power series. The reason is rather simple: in
a formal star product the number of involved derivatives of the two
factors is typically equal or higher as the power of the deformation
parameter: hence the classical Borel lemma allows to construct smooth
functions for which the star product has radius of convergence
zero. The interesting question is of course whether one can find a
reasonable subalgebra where the star product has a nontrivial radius
of convergence. Beside very few examples, related to the Weyl-Moyal
product \cite{waldmann:2014a} and the Kähler structure on the Poincaré
disk \cite{beiser.waldmann:2014a}, not much is known in this
direction. Alternatively, there are various approaches to strict
deformation quantization based on integral formulas for the deformed
product. The formal star product then arises as asymptotic expansion
of the integrals. Here we refer to e.g. \cite{rieffel:1993a} as well
as to \cite{bieliavsky:2002a, bieliavsky.gayral:2015a}.  However, the
usage of integral formulas bounds these approaches to finite
dimensions whereas a direct investigation of the convergence of series
may still be applicable in infinite dimensions as needed for (quantum)
field-theoretic models.

In this paper we want to add yet another example where the convergence
of a formal star product can be controlled in an efficient way: the
Gutt star product on $\lie{g}^*$.

In fact, our construction will work even in some infinite-dimensional
cases. Therefore we need a slight reformulation to incorporate these
situations, too: instead of the polynomial functions we focus on the
symmetric algebra $\Sym^\bullet(\lie{g})$ of the Lie algebra. In
finite dimensions this will make no difference but in infinite
dimensions, $\Sym^\bullet(\lie{g})$ only injects into
$\Pol^\bullet(\lie{g}^*)$ but is strictly smaller. Since the Gutt star
product on $\Sym^\bullet(\lie{g})$ can be constructed in any
dimension, this seems to be a reasonable framework. Our basic idea is
now to establish a locally convex topology on $\Sym^\bullet(\lie{g})$
in such a way that the Gutt star product becomes continuous. Then it
automatically extends to the completion which we want to be as large
as possible. However, we want the completion to be small enough so
that its elements are still functions on the (topological) dual
$\lie{g}'$ of $\lie{g}$. This requires the evaluation functionals on
points in $\lie{g}'$ to be continuous. While in finite dimensions
this will work for all Lie algebras equally well, in infinite
dimensions we have to add some technical continuity properties on the
Lie bracket of $\lie{g}$.

In the following, we consider a locally convex Lie algebra $\lie{g}$
over $\mathbb{K} = \mathbb{R}$ or $\mathbb{C}$, i.e. a real or complex
locally convex topological vector space with a continuous Lie
bracket. We focus on a honestly continuous Lie bracket instead of a
separately continuous one throughout this work. This means that for
every continuous seminorm $p$ there exists another continuous seminorm
$q$ such that for all $\xi, \eta \in \lie{g}$ one has
\begin{equation}
    \label{eq:ContinuousLieBracket}
    p([\xi, \eta])
    \leq
    q(\xi) q(\eta).
\end{equation}
For our study of the Gutt star product, this will not be enough, since
we will have to control an arbitrarily high number of nested brackets
without getting a new seminorm for each bracket. Thus, we will need an
estimate which does not depend on the number of Lie brackets implied.
This motivates the following definition, see also
\cite{boseck.czichowski.rudolph:1981a}:
\begin{definition}[Asymptotic estimate algebra]
    \label{def:AE}
    Let $\algebra{A}$ be a Hausdorff locally convex algebra (not
    necessarily associative) with $\cdot$ denoting the multiplication,
    and let $p$ be a continuous seminorm.
    \begin{definitionlist}
    \item \label{item:AsymptoticEstimate} A continuous seminorm $q$ is
        said to be an asymptotic estimate for $p$, if
        \begin{equation}
            \label{Intro:AE}
            p\left(
                w_n(x_1, \ldots, x_n)
            \right)
            \leq
            q(x_1) \cdots q(x_n)
        \end{equation}
        for all words $w_n(x_1, \ldots, x_n)$ made out of $n-1$
        products of the elements $x_1, \ldots, x_n \in \algebra{A}$
        with arbitrary position of placing brackets.
    \item \label{item:AEAlgebra} A locally convex algebra is said to be
        an asymptotic estimate algebra (AE-algebra), if every continuous
        seminorm has an asymptotic estimate.
    \end{definitionlist}
\end{definition}
We are mainly interested in the case of an AE-Lie algebra but of
course also associative AE-algebras are of interest. Given an
associative AE-algebra its commutator Lie algebra is a AE-Lie
algebra. Also, all finite-dimensional Lie algebras are AE-Lie
algebras. More generally, all \emph{locally multiplicatively convex}
algebras are of this type, i.e. those where one finds a defining set
of continuous seminorms $p$ such that
\begin{equation}
    \label{eq:lmc}
    p(x \cdot y) \le p(x)p(y)
\end{equation}
for all algebra elements $x, y$. Clearly, all finite-dimensional Lie
algebras are locally multiplicatively convex.

In order to formulate our continuity results we have to specify the
topology on $\Sym_R^\bullet(\lie{g})$. Here we rely on the results of
\cite{waldmann:2014a}, where a definition for a locally convex
topology on the tensor algebra $\Tensor^\bullet(V)$ of a locally
convex vector space $V$ was given: given $R \in \mathbb{R}$, one can
define the locally convex vector spaces $\Tensor_R^\bullet(V)$ and
$\Sym_R^\bullet(V)$, where $\Sym^\bullet(V)$ denotes the symmetric
tensor algebra viewed as subspace of the tensor algebra. The basic
idea is to control the growth of tensor powers of a seminorm $p^n$
applied to the homogeneous tensor parts of degree $n$ by the $R$-th
power of $n!$. There were also definitions given for the projective
limits $\Tensor_{R^-}^\bullet(V)$ and $\Sym_{R^-}^\bullet(V)$. We will
define those spaces more precisely in Section~\ref{sec:Prelim}. Using
this topology on the symmetric algebra, we can now state the main
results:
\begin{maintheorem}
    \label{theorem:MainTheoremI}%
    Let $\lie{g}$ be an AE-Lie algebra and let $R \ge 1$.
    \begin{theoremlist}
    \item The Gutt star product $\star_z$ is continuous with respect
        to the $\Sym_R$-topology for every $z \in \mathbb{K}$.
    \item The completion $\widehat{\Sym}_R^\bullet(\lie{g})$ becomes a
        locally convex Hopf algebra with respect to the Gutt star
        product and the undeformed coproduct, antipode, and counit.
    \item The Gutt star product is convergent as series in $z \in
        \mathbb{K}$.
    \item The construction is functorial for continuous Lie algebra
        homomorphisms.
    \end{theoremlist}
\end{maintheorem}
\begin{maintheorem}
    \label{theorem:MainTheoremII}%
    Let $\lie{g}$ be a nilpotent locally convex Lie algebra. Then the
    statement of Main Theorem~\ref{theorem:MainTheoremI} holds for all
    $R \ge 1$ and for the projective limit $R \rightarrow 1^-$.
\end{maintheorem}

Remarkably, in the general case of Main
Theorem~\ref{theorem:MainTheoremI}, the completion will \emph{not}
contain the exponentials of elements in $\lie{g}$: this is in some
sense to be expected as otherwise the product of two such exponentials
would again be defined as an element of the completion. This way, one
would be able to reconstruct a convergent BCH series for all elements
in $\lie{g}$, which is known to be impossible. However, in
the nilpotent case, we
have the exponentials inside the completion. Of course, this matches
well with the fact that the BCH series is non-problematic in this
case.
\begin{remark}
    \label{remark:FiniteDimCases}%
    In the finite-dimensional case one can directly consider the
    universal enveloping algebra and establish topologies on it. The
    analogous topology to our $\Sym_R$-topology has been used and
    investigated by \cite{rasevskii:1966a} and
    \cite{goodman:1971a}. From that point of view, our results can be
    seen as a generalization to the possibly infinite-dimensional case
    including the much more involved proofs for the AE Lie algebra
    case. The strategy in \cite{goodman:1971a} is based very much on
    the fact that in finite-dimensions one has a Banach-Lie
    algebra. Moreover, the topology is obtained by a quotient
    procedure starting with the $\Tensor_R$-topology on the tensor
    algebra and proving that the ideal generated by the Lie relations
    is actually closed in order to give a Hausdorff topology on the
    universal enveloping algebra. What is missing is of course the
    continuity of the coefficient maps $C_n$ of the Gutt star product
    since there is only the filtration of the universal enveloping
    algebra but not the grading of the symmetric algebra available.
    This makes the question on analytic dependence on a deformation
    parameter meaningless: at the time of \cite{rasevskii:1966a} and
    \cite{goodman:1971a} the notion of star products was not yet
    commonly known.
\end{remark}

The paper is organized as follows: In Section~\ref{sec:Prelim} we
first outline the construction of the locally convex topologies on the
tensor algebra and the symmetric algebra according to
\cite{waldmann:2014a}. Then we recall the basic construction of the
Gutt star product and provide several equivalent descriptions. The
most useful for our purposes is the one based on the
Baker-Campbell-Hausdorff series
\cite{drinfeld:1983a}. Section~\ref{sec:LCTopolgy} contains the heart
of this work: we establish the continuity of the Gutt star product
with respect to the $\Sym_R$-topology for $R \ge 1$. The first
approach works for general asymptotic estimate Lie algebras. We also
include a second and easier proof which, however, works only in the
locally multiplicatively convex case. Then we show that we in fact
obtain an entire deformation enjoying good functorial properties with
respect to continuous Lie algebra
homomorphisms. Section~\ref{sec:Nilpotent} is devoted to the nilpotent
case.  Here we can improve the previous continuity statements to the
projective limit $R \longrightarrow 1^-$ since the BCH series has
significantly less terms in this situation. The completion will now
include exponential functions. In Section~\ref{sec:Hopf} we show the
continuity of the remaining Hopf algebra maps. This is now much
simpler as they are the classical maps not depending on the
deformation parameter $z$. Finally, Section~\ref{sec:outlook} contains
some open questions and an outlook on further research we want to
pursue in the future. In Appendix~\ref{sec:TheProofPBWGDBCH} we have
included algebraic proofs of the equivalence of various forms of the
Gutt star product, statements which are folklore knowledge but hard to
trace down in the literature.

%
%

\medskip

\noindent
\textbf{Acknowledgements:} We would like to thank Matthias Schötz for
various discussions and suggestions. Moreover, we would like to thank
Martin Bordemann, Simone Gutt, Friedrich Wagemann, and the referee for
valuable remarks and suggestions.  Finally, we thank Jochen Wengenroth
for pointing out \cite{mitiagin.rolewicz.zelazko:1962a} via
MathOverflow.

%
%

\section{Preliminaries}
\label{sec:Prelim}

In this section we collect some preliminary results on the locally
convex topologies we use as well as on the Gutt star product.

%
%

\subsection{The Topologies on $\Tensor^\bullet(V)$ and $\Sym^\bullet(V)$}
\label{subsec:TheTopologies}

Let $V$ be a locally convex vector space over $\mathbb{K}$ where
$\mathbb{K}$ stands for either $\mathbb{R}$ or $\mathbb{C}$. We want
to recall the definition of the locally convex topology on the tensor
algebra from \cite{waldmann:2014a} and some of its consequences. We
endow every tensor power $V^{\tensor n}$ with the $\pi$-topology: we
will denote for a given (continuous) seminorm $p$ its tensor power by
$p^n = p^{\tensor n}$ for $n \ge 1$. For $n = 0$ we take $p^0$ to be
the absolute value on the field $\mathbb{K}$. Then the $\pi$-topology
on $V^{\tensor_{\pi} n}$ is obtained by taking all $p^n$ for all
continuous seminorms $p$.

In order to define the Gutt star product, we need the symmetric
algebra over the underlying vector space. Recall that the
symmetrization map
\begin{equation}
    \label{eq:Symmetrizer}
    \Symmetrizer_n
    \colon
    V^{\tensor_{\pi} n}
    \longrightarrow
    V^{\tensor_{\pi} n}
    , \quad
    (v_1 \tensor \ldots \tensor v_n)
    \longmapsto
    \frac{1}{n!}
    \sum\limits_{\sigma \in S_n}
    v_{\sigma(1)}
    \tensor \ldots \tensor
    v_{\sigma(n)}
\end{equation}
is continuous and we have for all $v \in V^{\tensor_{\pi} n}$ the
estimate
\begin{equation}
    \label{eq:ContinuityForSn}
    p^n(\Symmetrizer_n(v))
    \leq
    p^n(v).
\end{equation}
Since $\Symmetrizer_n$ is idempotent, it turns out that each symmetric
tensor power
\begin{equation}
    \label{eq:SymmetricTensorPower}
    \Sym_{\pi}^n(V)
    = \image \Symmetrizer_n
    = \ker(\id - \Symmetrizer_n)
    \subseteq V^{\tensor_{\pi} n}
\end{equation}
is a closed subspace with respect to the $\pi$-topology. The symmetric
tensor product is then given by
\begin{equation}
    \label{eq:SymmetricTensorProduct}
    vw = \Symmetrizer_{n+m}(v \tensor w)
\end{equation}
for $v \in \Sym_{\pi}^n(V)$ and $w \in \Sym_{\pi}^m(V)$. Since the
tensor product obeys $p^{n+m}(v \tensor w) \le p^n(v)p^m(w)$ we get
the continuity
\begin{equation}
    \label{eq:ContinuityOfSymmetricProduct}
    p^{n + m}(vw)
    =
    p^{n+m}(\Symmetrizer_{n+m}(v \tensor w))
    \leq
    p^n(v) p^m(w)
\end{equation}
of the symmetric tensor product as well. Then the symmetric algebra
$\Sym^\bullet(V) = \bigoplus_{n=0}^\infty \Sym^n(V)$ becomes a
commutative associative unital algebra.

We now want to set up a topology on $\Tensor^\bullet(V)$ and
$\Sym^\bullet(V)$, which yields the $\pi$-topology on each component
such that the (symmetric) tensor product becomes continuous.  We
recall the following definition \cite[Def.~3.5 and
Def.~3.12]{waldmann:2014a}:
\begin{definition}[$\Tensor_R$-, $\Sym_R$-, and $\Sym_{R^-}$-topology]
    \label{definition:Topologies}%
    Let $R \in \mathbb{R}$.
    \begin{definitionlist}
    \item For every continuous seminorm $p$ on $V$ we define
        \begin{equation}
            \label{eq:pRSeminorm}
            p_R
            =
            \sum\limits_{n=0}^{\infty}
            n!^R p^n
        \end{equation}
        on the tensor algebra $\Tensor^\bullet(V)$.
    \item The locally convex topology arising from all such seminorms
        $p_R$ is called the $\Tensor_R$-topology on
        $\Tensor^\bullet(V)$, which we denote by
        $\Tensor_R^\bullet(V)$ when equipped with this topology.
    \item The induced topology on the subspace $\Sym^\bullet(V)
        \subseteq \Tensor^\bullet(V)$ is called the $\Sym_R$-topology,
        and we write $\Sym_R^\bullet(V)$.
    \item The $\Sym_{R^-}$-topology is defined as the projective limit
        of the $\Sym_{R-\epsilon}$-topologies for $\epsilon
        \longrightarrow 0$ and we set
        \begin{equation}
            \label{eq:ProjectiveLimit}
            \Sym_{R^-}^\bullet(V)
            =
            \projlim\limits_{\epsilon \longrightarrow 0}
            \Sym_{R - \epsilon}^\bullet(V).
        \end{equation}
    \end{definitionlist}
\end{definition}

We now want to collect the most important results on the locally
convex algebras $\Tensor_R^\bullet(V)$ and $\Sym_R^\bullet(V)$ which
we will later use. Proofs and more detailed explanations can be found
in \cite[Sect.~3 and Sect.~4]{waldmann:2014a}.
\begin{proposition}
    \label{proposition:Projections}%
    Let $R' \ge R \ge 0$ and let $q, p$ be continuous seminorms.
    \begin{propositionlist}
    \item \label{item:EstimateForSeminorms} We have $p_{R'} \ge p_R$
        and if $q \ge p$ then $q_R \ge p_R$.
    \item \label{item:TensorProductContinuous} The (symmetric) tensor
        product is continuous and satisfies
        \begin{equation}
            \label{eq:TensorContinuous}
            p_R(vw)
            \le
            p_R(v \tensor w)
            \le
            (2^R p)_R(v)
            (2^R p)_R(w).
        \end{equation}
    \item \label{item:PitopologyOnComponents} For all $n \in
        \mathbb{N}$ the induced topology on $\Tensor^n(V) \subseteq
        \Tensor_R^\bullet(V)$ and on $\Sym^n(V) \subseteq
        \Sym_R^\bullet(V)$ is the $\pi$-topology.
    \item \label{item:ComponentProjectionsContinuous} For all $n \in
        \mathbb{N}$ the projection and the inclusion maps $\pi_n$ and $\iota_n$
        \begin{equation}
            \label{eq:ProjectionInclusionContinuous}
            \Tensor_R^\bullet(V)
            \stackrel{\pi_n}{\longrightarrow}
            V^{\tensor_{\pi} n}
            \stackrel{\iota_n}{\longrightarrow}
            \Tensor^\bullet(V)
            \quad
            \textrm{and}
            \quad
            \Sym_R^\bullet(V)
            \stackrel{\pi_n}{\longrightarrow}
            \Sym_{\pi}^n(V)
            \stackrel{\iota_n}{\longrightarrow}
            \Sym_R^\bullet(V)
        \end{equation}
        are continuous.
    \item \label{item:CompletionExplicitly} The completions
        $\widehat{\Tensor}_R^\bullet(V)$ of $\Tensor_R^\bullet(V)$ and
        $\widehat{\Sym}_R^\bullet(V)$ of $\Sym_R^\bullet(V)$ can be
        described explicitly as
        \begin{equation}
            \label{eq:CompletionTR}
            \widehat{\Tensor}_R^\bullet(V)
            =
            \left\{
                \left.
                    v
                    =
                    \sum\limits_{n=0}^{\infty}
                    v_n
                    \ \right| \
                p_R(v)
                <
                \infty,
                \textrm{ for all continuous }
                p
            \right\}
            \subseteq
            \prod\limits_{n=0}^{\infty}
            V^{\hat{\tensor}_{\pi} n}
        \end{equation}
        and
        \begin{equation}
            \label{eq:CompletionSR}
            \widehat{\Sym}_R^\bullet(V)
            =
            \left\{
                \left.
                    v
                    =
                    \sum\limits_{n=0}^{\infty}
                    v_n
                    \ \right| \
                p_R(v)
                <
                \infty,
                \textrm{ for all continuous }
                p
            \right\}
            \subseteq
            \prod\limits_{n=0}^{\infty}
            \widehat{\Sym^n_\pi}(V),
        \end{equation}
        where the $p_R$ are extended to the Cartesian product allowing
        the value $+\infty$.
    \item \label{item:StrictlyFinerForBiggerR} If $R' > R$, then the
        topology on $\Tensor_{R'}^\bullet(V)$ is strictly finer than
        the one on $\Tensor_R^\bullet(V)$, the same holds for
        $\Sym_{R'}^\bullet(V)$ and $\Sym_R^\bullet(V)$. Therefore the
        completions for $R'$ are smaller than the ones for $R$.
    \item \label{item:ComponentInclusionsContinuous} The inclusion
        maps $\widehat{\Tensor}_{R'}^\bullet(V) \longrightarrow
        \widehat{\Tensor}_R^\bullet(V)$ and
        $\widehat{\Sym}_{R'}^\bullet(\lie{g}) \longrightarrow
        \widehat{\Sym}_R^\bullet(\lie{g})$ are continuous.
    \item \label{item:LmcJustForZero} The $\Tensor_R$-topology on
        $\Tensor_R^\bullet(V)$ and the $\Sym_R$-topology on
        $\Sym_R^\bullet(V)$ are locally multiplicatively convex with
        respect to the (symmetric) tensor product iff $R = 0$.
    \item \label{item:NuclearStuff} The space $V$ is nuclear iff
        $\Sym^\bullet_R(V)$ is nuclear iff $\Tensor^\bullet_R(V)$ is
        nuclear. In particular, this is the case if $\dim V < \infty$.
    \item \label{item:PointsArePoints} The evaluation functionals
        $\delta_\varphi\colon \Sym^\bullet_R(V) \longrightarrow
        \mathbb{K}$ for $\varphi \in V'$ are continuous.
    \end{propositionlist}
\end{proposition}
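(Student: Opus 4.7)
The proposition collects ten routine but interlocking facts about the locally convex topologies $\Tensor_R^\bullet(V)$, $\Sym_R^\bullet(V)$, and their completions. Since the authors themselves point to \cite{waldmann:2014a} for detailed proofs, my plan is to organize the arguments so that the earlier items feed directly into the later ones, with a single common toolkit of seminorm estimates.

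I would start with \refitem{item:EstimateForSeminorms}, which is immediate: $n!^{R'}\ge n!^R$ for $R'\ge R$ and $q\ge p$ implies $q^n\ge p^n$ on $V^{\tensor_\pi n}$ by the universal property of the $\pi$-topology, so both monotonicity claims fall out of the defining series \eqref{eq:pRSeminorm} term by term. For \refitem{item:TensorProductContinuous}, I would use the submultiplicativity $p^{n+m}(v\tensor w)\le p^n(v)p^m(w)$ together with the Vandermonde-type bound $\binom{n+m}{n}\le 2^{n+m}$; writing $p_R(v\tensor w)=\sum_N N!^R p^N\!\bigl(\sum_{n+m=N} v_n\tensor w_m\bigr)$ and expanding $N!^R\le (n!m!)^R 2^{R(n+m)}\cdot (\binom{N}{n})^R$-style estimates reduces to a product of two $R$-series in $2^R p$. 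The symmetrizer bound \eqref{eq:ContinuityForSn} then gives the same estimate on $\Sym^\bullet_R(V)$.

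For \refitem{item:PitopologyOnComponents} and \refitem{item:ComponentProjectionsContinuous}, restricting $p_R$ to $V^{\tensor n}$ yields exactly $n!^R p^n$, which is equivalent to $p^n$ as a seminorm on a fixed finite tensor power; this both identifies the induced topology and gives the continuity of $\pi_n$ and $\iota_n$. \refitem{item:CompletionExplicitly} follows by the standard sequential characterization of the completion of a countably seminormed space: a Cauchy net with respect to every $p_R$ converges componentwise in each $V^{\hat\tensor_\pi n}$, and summability of $p_R$ precisely characterizes which formal series $\sum v_n$ lift to elements of the completion. \refitem{item:StrictlyFinerForBiggerR} is monotonicity plus a counterexample: pick $\xi\in V$ with $p(\xi)=1$ and consider $v_n=\xi^{\tensor n}/n!^{R}$, which lies in $\widehat{\Tensor}_R^\bullet(V)$ but not in $\widehat{\Tensor}_{R'}^\bullet(V)$. \refitem{item:ComponentInclusionsContinuous} then is just \refitem{item:EstimateForSeminorms} lifted to completions.

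The two items that require more thought are \refitem{item:LmcJustForZero} and \refitem{item:NuclearStuff}. For \refitem{item:LmcJustForZero}, the direction $R=0$ gives lmc directly from \eqref{eq:TensorContinuous}; conversely, for $R>0$ I would argue that for $\xi\in V$ with $p(\xi)=1$ one has $p_R(\xi^{\tensor n})=n!^R$, which grows super-exponentially, so no submultiplicative equivalent seminorm can exist. For \refitem{item:NuclearStuff}, nuclearity of each $V^{\tensor_\pi n}$ (respectively $\Sym^n_\pi(V)$) follows from the fact that nuclearity is preserved by $\pi$-tensor products and closed subspaces, and nuclearity is preserved under countable projective/inductive limits of the form needed for the completion; the reverse implication uses that $V$ is a complemented subspace of $\Sym^\bullet_R(V)$ via $\pi_1\circ\iota_1=\id_V$. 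Finally, \refitem{item:PointsArePoints} is easy: for $\varphi\in V'$ choose a continuous seminorm $p$ with $|\varphi|\le p$, then $|\delta_\varphi(v)|\le \sum_n p^n(v_n)\le p_R(v)$ for any $R\ge 0$. The only real obstacle is the careful combinatorics in \refitem{item:TensorProductContinuous}, since the factor $2^R$ must be tracked precisely to get the sharp estimate \eqref{eq:TensorContinuous}; everything else is bookkeeping once this estimate is in hand.
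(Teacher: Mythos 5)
The paper itself offers no proof of this proposition: it is quoted verbatim from \cite{waldmann:2014a} (Sect.~3 and 4), so there is nothing internal to compare you against. Judged on its own merits, your sketch is sound for items \refitem{item:EstimateForSeminorms}--\refitem{item:ComponentInclusionsContinuous}, \refitem{item:LmcJustForZero} and \refitem{item:PointsArePoints}, and your treatment of \refitem{item:TensorProductContinuous} via $N!^R=\binom{n+m}{n}^R n!^R m!^R\le 2^{R(n+m)}n!^R m!^R$ is exactly the right computation. Two points need repair, however. In \refitem{item:StrictlyFinerForBiggerR} your witness $v=\sum_n \xi^{\tensor n}/n!^{R}$ does \emph{not} lie in $\widehat{\Tensor}_R^\bullet(V)$: one has $q_R(v)=\sum_n q(\xi)^n$, which diverges for any continuous seminorm with $q(\xi)\ge 1$, and such seminorms always exist by rescaling. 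You need an exponent strictly between $R$ and $R'$ (inclusive on the right), e.g. $v=\sum_n \xi^{\tensor n}/n!^{R'}$, for which $q_R(v)=\sum_n n!^{R-R'}q(\xi)^n<\infty$ for every $q$ while $p_{R'}(v)=\sum_n p(\xi)^n=\infty$ once $p(\xi)\ge 1$; the single elements $w_k=\xi^{\tensor k}/k!^{R'}$ then also show that $p_{R'}$ is not $\Tensor_R$-continuous, which is the actual content of ``strictly finer'' on the uncompleted space.

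The genuine gap is in \refitem{item:NuclearStuff}. The $\Tensor_R$-topology on $\bigoplus_n V^{\tensor_\pi n}$ is neither the product topology nor the locally convex direct-sum topology, nor a countable projective or inductive limit of the components in any sense to which the standard permanence theorems apply: it is strictly coarser than the direct-sum topology (only the coupled seminorms $\sum_n n!^R p^n$ are allowed), and nuclearity is not inherited by passing to a coarser topology. So ``preserved under countable projective/inductive limits of the form needed for the completion'' does not close the argument. The correct proof is quantitative: given a continuous Hilbert seminorm $p$ on a nuclear $V$, choose $q\ge p$ so that the linking map $\hat V_q\longrightarrow \hat V_p$ is Hilbert--Schmidt (or nuclear) with norm at most, say, $\tfrac12$; since these norms are submultiplicative under $\pi$-tensor powers, the linking map for $q_R\longrightarrow p_R$ decomposes as a direct sum over $n$ of maps with nuclear norm $\lesssim 2^{-n}$, and the summability over $n$ is what makes the total linking map nuclear. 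That summation over the grading is the one idea in the whole proposition that is not bookkeeping, and your sketch skips it. The converse direction (nuclearity of $\Sym_R^\bullet(V)$ implies that of $V$) via $\pi_1\circ\iota_1=\id_V$ and item \refitem{item:PitopologyOnComponents} is fine.
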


%
%

\subsection{The Gutt Star Product}
\label{subsec:GuttStarProduct}

Let us now briefly recall the basic construction of the Gutt star
product according to \cite{gutt:1983a}: originally, this was just an
intermediate step to get a star product on the cotangent bundle of a
Lie group $G$ with Lie algebra $\lie{g}$. However, the resulting star
product on $\lie{g}^*$ can be described entirely algebraic as follows:
first we replace the polynomials on $\lie{g}^*$ by the symmetric
algebra $\Sym^\bullet(\lie{g})$ which behaves better in infinite
dimensions. Then we use the explicit PBW isomorphism
$\mathfrak{q}\colon \Sym^\bullet (\lie{g}) \longrightarrow
\algebra{U}(\lie{g})$ via
\begin{equation}
    \label{eq:QuantizationMap}
    \mathfrak{q}_n
    \colon
    \Sym^n(\lie{g})
    \longrightarrow
    \algebra{U}(\lie{g})
    , \quad
    \xi_1 \cdots \xi_n
    \longmapsto
    \frac{1}{n!}
    \sum\limits_{\sigma \in S_n}
    \xi_{\sigma(1)}
    \odot \cdots \odot
    \xi_{\sigma(n)}
    , \quad
    \mathfrak{q}
    =
    \sum\limits_{n = 0}^{\infty}
    \mathfrak{q}_n.
\end{equation}
We always denote the multiplication in $\algebra{U}(\lie{g})$ by
$\odot$ to avoid confusion. For $z \in \mathbb{K}$ the Gutt star
product $\star_z$ is then given by the pull-back of the product of
$\algebra{U}(\lie{g})$ together with a degree-dependent rescaling by
$z$. In detail, one defines
\begin{equation}
    \label{eq:xikStaretaell}
    x \star_z y
    =
    \sum_{n=0}^{k+\ell-1} z^n
    \pi_{k+\ell-n}
    \left(
        \mathfrak{q}^{-1}(\mathfrak{q}(x) \odot \mathfrak{q}(y))
    \right)
\end{equation}
for homogeneous $x \in \Sym^k(\lie{g})$ and $y \in \Sym^\ell(\lie{g})$
and extends this bilinearly to $\Sym^\bullet(\lie{g})$, where
$\pi_r$ projects on the homogeneous part of degree
$r$. Another approach to this star product is due to Drinfel'd who
based the construction on the Baker-Campbell-Hausdorff series
\cite{drinfeld:1983a}:
\begin{proposition}
    \label{proposition:PBWGuttDrinfeldBCHWhoElseStarProduct}%
    Let $\lie{g}$ be a Lie algebra. Then we have
    \begin{equation}
        \label{eq:DrinfeldIsGuttStar}
        \exp(\xi) \star_z \exp(\eta)
        =
        \exp \left(
            \frac{1}{z}
            \bch{z \xi}{z \eta}
	\right),
    \end{equation}
    where we consider the exponentials as formal power series in the
    variables $\xi$ and $\eta$. Conversely, by differentiating the
    right hand side with respect to these variables, one can determine
    $\star_z$ completely.
\end{proposition}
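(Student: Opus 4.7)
The plan is to deduce \eqref{eq:DrinfeldIsGuttStar} from the classical Baker--Campbell--Hausdorff identity in $\algebra{U}(\lie{g})$ by two reductions: one that removes the deformation parameter $z$, and one that translates $\mathfrak{q}$-images of exponentials into exponentials of BCH series.

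First, I would reduce to the case $z=1$ via the grading automorphism $\alpha_z \colon \Sym^\bullet(\lie{g}) \to \Sym^\bullet(\lie{g})$ defined by $\alpha_z|_{\Sym^k(\lie{g})} = z^k\id$. A direct comparison with \eqref{eq:xikStaretaell} shows that for $x \in \Sym^k(\lie{g})$ and $y \in \Sym^\ell(\lie{g})$,
\begin{equation*}
    x \star_z y
    =
    \alpha_{1/z}\bigl(\alpha_z(x) \star_1 \alpha_z(y)\bigr),
\end{equation*}
so it suffices to compute $\exp(z\xi) \star_1 \exp(z\eta)$ and then apply $\alpha_{1/z}$ to the result.

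Second, I would observe that $\mathfrak{q}_n(\xi^n) = \xi^{\odot n}$ is immediate from the definition \eqref{eq:QuantizationMap}, so as formal power series in $\xi$ one has $\mathfrak{q}(\exp(\xi)) = \exp_\odot(\xi)$, where $\exp_\odot$ denotes the exponential with respect to $\odot$. The classical BCH identity $\exp_\odot(\xi) \odot \exp_\odot(\eta) = \exp_\odot(\bch{\xi}{\eta})$ in $\algebra{U}(\lie{g})$, applied with $z\xi$, $z\eta$ in place of $\xi, \eta$, then yields
\begin{equation*}
    \exp(z\xi) \star_1 \exp(z\eta)
    =
    \mathfrak{q}^{-1}\bigl(\mathfrak{q}(\exp(z\xi)) \odot \mathfrak{q}(\exp(z\eta))\bigr)
    =
    \exp\bigl(\bch{z\xi}{z\eta}\bigr).
\end{equation*}
The crucial observation is that $\bch{z\xi}{z\eta}$, being a Lie series in $\xi,\eta$, lies entirely in $\Sym^1(\lie{g}) = \lie{g}$; consequently each power $(\bch{z\xi}{z\eta})^n$ sits in $\Sym^n(\lie{g})$ and $\alpha_{1/z}$ acts on it simply as $z^{-n}$. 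Since $\alpha_{1/z}$ is an algebra homomorphism of the commutative symmetric algebra, it commutes with $\exp$ and produces $\exp\bigl(\tfrac{1}{z}\bch{z\xi}{z\eta}\bigr)$, which is the desired identity.

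For the converse direction, note that $\star_z$ is bilinear and that $\Sym^\bullet(\lie{g})$ is spanned by products $\xi_1 \cdots \xi_k$, which can be recovered from powers $(s_1 \xi_1 + \cdots + s_k \xi_k)^k$ by polarization. Hence every product $\xi_1 \cdots \xi_k \star_z \eta_1 \cdots \eta_\ell$ is obtained by taking mixed partial derivatives in the $s_i, t_j$ of $\exp(\sum_i s_i \xi_i) \star_z \exp(\sum_j t_j \eta_j)$ at $s = t = 0$. The main subtlety I anticipate is that the formulas involve infinite sums and must be interpreted consistently in a formal-power-series framework (for instance in $\Sym^\bullet(\lie{g})[[s,t]]$) so that the exponentials and their $\star_z$-products are well-defined; once this bookkeeping is in place, the two reductions above complete the proof.
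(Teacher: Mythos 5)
Your proof is correct, but it follows a genuinely different route from the one in the paper. You reduce to $z=1$ via the grading automorphism $\alpha_z$ (and your identity $x \star_z y = \alpha_{1/z}(\alpha_z(x) \star_1 \alpha_z(y))$ does check out against \eqref{eq:xikStaretaell}), and then you invoke the classical identity $\exp_{\odot}(\xi) \odot \exp_{\odot}(\eta) = \exp_{\odot}\left(\bch{\xi}{\eta}\right)$ in the degree-completed enveloping algebra together with $\mathfrak{q}(\exp(\zeta)) = \exp_{\odot}(\zeta)$, which is immediate from \eqref{eq:QuantizationMap}. The paper instead proves in Appendix~\ref{sec:TheProofPBWGDBCH} that three a priori different products (the Gutt product \eqref{eq:xikStaretaell}, the $\mathfrak{q}_z$-pullback product \eqref{eq:DeformedLieIsGuttStar}, and the Drinfel'd product \eqref{eq:DrinfeldIsGuttStar}) all coincide, by reducing everything to products with one linear factor and verifying that each yields $\sum_j \binom{k}{j} z^j B_j^* \xi^{k-j} (\ad_\xi)^j(\eta)$; the hard combinatorial core there is the kernel identity $\mathcal{K}(k,s) = \delta_{s,0}$, proved with the Carlitz identity for Bernoulli numbers. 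Your argument is shorter and more conceptual, at the price of taking the BCH exponential identity in $\widehat{\algebra{U}}(\lie{g})$ as a known input (which is legitimate: it is a purely combinatorial statement, valid over any characteristic-zero field and in infinite dimensions, consistent with the paper's own reliance on the Goldberg--Thompson form of BCH). What your route does not deliver, and what the paper's longer computation buys as a by-product, is Proposition~\ref{proposition:DeformedLieAlgebraStarProduct}, i.e.\ the identification $\star_z = \mathfrak{q}_z^{-1}(\mathfrak{q}_z(\argument) \odot_z \mathfrak{q}_z(\argument))$, which the paper needs elsewhere; that equality would still require a separate argument in your setup. Your treatment of the converse direction (polarization in auxiliary parameters, working bidegree by bidegree so that all sums are finite) matches the paper's use in Lemma~\ref{lemma:2MonomialsFormula1} and is fine.
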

This result seems to be well-known folklore. One can find proofs based
on differential geometric arguments e.g. in
\cite[Lemma~10]{bordemann.neumaier.waldmann:1998a}. Since those
arguments do not work in infinite dimensions any more, for convenience
we give an entirely combinatorial proof in
Appendix~\ref{sec:TheProofPBWGDBCH}.  There is yet another way to
define the Gutt star product, since one can take the universal
enveloping algebra $\algebra{U}(\lie{g}_z)$ of $\lie{g}$ with the Lie
bracket rescaled by $z$: there, we have the relation
\begin{equation}
    \label{eq:bulletProductGivesBracket}
    \xi \odot \eta - \eta \odot \xi - z[\xi, \eta]
    =
    0.
\end{equation}
We use again the Poincar\'e-Birkhoff-Witt isomorphism $\mathfrak{q}_z \colon
\Sym^{\bullet}(\lie{g}) \longrightarrow \algebra{U}(\lie{g}_z)$ from Equation
\eqref{eq:QuantizationMap} and get the next, well-known result:
\begin{proposition}
    \label{proposition:DeformedLieAlgebraStarProduct}%
    Let $\lie{g}$ be a Lie algebra. Then we have for $x, y \in
    \Sym^{\bullet}(\lie{g})$ and all $z \in \mathbb{K}$
    \begin{equation}
        \label{eq:DeformedLieIsGuttStar}
        x \star_z y
        =
        \mathfrak{q}_z^{-1}
        \left(
            \mathfrak{q}_z(x)
            \odot_z
            \mathfrak{q}_z(y)
        \right).
    \end{equation}
\end{proposition}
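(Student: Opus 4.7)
The plan is to reduce the identity to the Drinfel'd/BCH characterization of $\star_z$ given in Proposition~\ref{proposition:PBWGuttDrinfeldBCHWhoElseStarProduct}, evaluated on formal exponentials $\exp(\xi)$ and $\exp(\eta)$ of elements $\xi, \eta \in \lie{g}$. Since that proposition tells us that the action on such formal exponentials determines $\star_z$ completely, it suffices to verify \eqref{eq:DeformedLieIsGuttStar} in this case. So first I would reduce the problem to checking
\[
    \mathfrak{q}_z^{-1}\bigl(
        \mathfrak{q}_z(\exp \xi)
        \odot_z
        \mathfrak{q}_z(\exp \eta)
    \bigr)
    =
    \exp\!\left(\tfrac{1}{z} \bch{z\xi}{z\eta}\right).
\]

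The next step is to unwind the PBW-isomorphism on exponentials. From the definition \eqref{eq:QuantizationMap} we have $\mathfrak{q}_z(\xi^n/n!) = \xi^{\odot n}/n!$, hence $\mathfrak{q}_z$ sends the formal exponential $\exp(\xi) \in \Sym^\bullet(\lie{g})\llbracket \xi \rrbracket$ to the corresponding formal exponential $\exp_{\odot_z}(\xi) = \sum_{n \geq 0} \xi^{\odot n}/n!$ in the rescaled enveloping algebra $\algebra{U}(\lie{g}_z)$. Dually, $\mathfrak{q}_z^{-1}\bigl(\exp_{\odot_z}(\zeta)\bigr) = \exp(\zeta)$ for any Lie algebra element $\zeta$. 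Thus the computation on the left-hand side reduces to a computation of the product of two $\odot_z$-exponentials inside $\algebra{U}(\lie{g}_z)$.

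Now I would invoke the classical BCH identity inside the (undeformed) enveloping algebra $\algebra{U}(\lie{g}_z)$, namely
\[
    \exp_{\odot_z}(\xi) \odot_z \exp_{\odot_z}(\eta)
    =
    \exp_{\odot_z}\!\bigl(\bch{\xi}{\eta}_{[\cdot,\cdot]_z}\bigr),
\]
where the subscript indicates that the BCH series is taken with respect to the bracket $[\cdot,\cdot]_z = z[\cdot,\cdot]$ of $\lie{g}_z$. Since every homogeneous component of BCH of bracket-degree $k$ scales by $z^k$ when the bracket is rescaled, one easily checks the scaling identity
\[
    \bch{\xi}{\eta}_{[\cdot,\cdot]_z}
    =
    \tfrac{1}{z}\, \bch{z\xi}{z\eta},
\]
where the right-hand side uses the original bracket of $\lie{g}$. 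Combining these ingredients and applying $\mathfrak{q}_z^{-1}$ yields exactly $\exp\!\bigl(\tfrac{1}{z}\bch{z\xi}{z\eta}\bigr)$, which matches the Drinfel'd formula \eqref{eq:DrinfeldIsGuttStar}.

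The main obstacle is nothing conceptual but rather the careful treatment of the formal power series: one has to make sure that the identities above make sense componentwise in the formal variables $\xi, \eta$ (so that no convergence is assumed), and one must cite or verify the classical BCH identity inside $\algebra{U}(\lie{g}_z)$ in this purely formal/algebraic setting. An alternative, equivalent route -- perhaps slightly cleaner -- would be to prove the same identity by matching \eqref{eq:xikStaretaell} directly with the PBW isomorphism $\mathfrak{q}_z$ via the scaling automorphism $\Sym^\bullet(\lie{g}) \to \Sym^\bullet(\lie{g})$ sending $\xi \mapsto z\xi$ on $\Sym^1$; this makes the appearance of the degree-dependent factor $z^n$ in \eqref{eq:xikStaretaell} transparent and avoids formal exponentials altogether. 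Either way the argument is essentially a combinatorial bookkeeping of powers of $z$, and the author's own appendix is exactly the right place to host the full calculation.
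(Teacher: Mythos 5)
Your argument is mathematically sound in substance but follows a genuinely different route from the paper's, and it has one structural weakness you should address. The paper's appendix never forms exponentials at all: it reduces everything, by polarization and iteration, to products $\xi^k \star \eta$ with one linear factor, proves by a direct computation in $\algebra{U}(\lie{g}_z)$ that $\mathfrak{q}_z\bigl(\sum_j \binom{k}{j} z^j B_j^*\, \xi^{k-j}(\ad_{\xi})^j(\eta)\bigr) = \mathfrak{q}_z(\xi^k)\odot_z\mathfrak{q}_z(\eta)$ (Proposition~\ref{proposition:App:DrinfeldIsDeformedStar}, whose Lemmas~\ref{App:Lemma1} and~\ref{App:Lemma2} hinge on a nontrivial Bernoulli-number identity of Carlitz), and then matches this against the same explicit formula for $\star_z$, obtained from $\star_1 = \widehat{\star}_1$ and the degree-dependent insertion of $z^n$ in \eqref{eq:xikStaretaell}. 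You replace the entire Carlitz computation by the abstract BCH identity $\exp_{\odot}(\xi)\odot_z\exp_{\odot}(\eta) = \exp_{\odot}\bigl(\bch{\xi}{\eta}_z\bigr)$ in the degree-completed enveloping algebra together with the rescaling covariance of the BCH series. That is cleaner and remains purely algebraic, hence valid in infinite dimensions as the paper requires, at the price of justifying the formal-exponential manipulations componentwise (which you acknowledge) and of importing the BCH theorem as a black box where the paper is self-contained.

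The weakness: you invoke Proposition~\ref{proposition:PBWGuttDrinfeldBCHWhoElseStarProduct} to pass from the Drinfel'd product $\ast_z$ back to $\star_z$ as defined by \eqref{eq:xikStaretaell}. In the paper both propositions are proved simultaneously in the appendix, and the proof of Proposition~\ref{proposition:PBWGuttDrinfeldBCHWhoElseStarProduct} goes precisely through the identification with $\widehat{\star}_z$; so, measured against the paper's own logical structure, your argument is circular. What your computation actually establishes is $\widehat{\star}_z = \ast_z$; you still owe the link to $\star_z$, which amounts to relating \eqref{eq:xikStaretaell} at general $z$ to the $z=1$ case by counting the degree drop (a term with $n$ brackets loses $n$ symmetric degrees and hence acquires exactly $z^n$). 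Your own sketched alternative, the scaling automorphism $\xi \mapsto z\xi$, which intertwines $\mathfrak{q}_z$ with $\mathfrak{q}$ and turns \eqref{eq:xikStaretaell} directly into the claimed identity for $z \neq 0$ (with $z=0$ trivial), supplies precisely this missing link and would make the proof self-contained without exponentials; I recommend promoting it from a closing remark to the actual argument.
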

Since we will use the isomorphism $\mathfrak{q}_z$, we also give a proof of
this in Appendix~\ref{sec:TheProofPBWGDBCH}.
\begin{remark}[Integral formula]
	Note that there is also an integral formula for the Gutt star product, which
	of course only holds in finite dimensions. This approach can be found in
	Berezin's work \cite[Formula (24)]{berezin:1967a}, for example. One can
	understand $\algebra{U}(\lie{g})$ as the distributions on the Lie Group $G$
	of $\lie{g}$ with compact support near the unit element using the convolution
	as multiplication. One uses the exponential map to get a star product on
	$\lie{g}^*$ from this. Since this formula uses the Fourier transform and the
	Baker-Campbell-Hausdorff series, one has to make sure that the Fourier
	transformed functions only have support in an area, where the exponential map
	is diffeomorphic and where the BCH series converges. For our purpose, the
	way via the formal series is more suitable since on one hand we can
	avoid those difficulties and on the other hand, our approach remains valid in
	infinite dimensions.
\end{remark}

Most of our analysis of $\star_z$ is based on properties of the BCH
series. To this end, we briefly recall the relevant facts and
establish some notation. First of all, we set for $\xi, \eta \in
\lie{g}$
\begin{equation}
    \label{Prelim:BCH}
    \bch{\xi}{\eta}
    =
    \sum\limits_{n = 1}^{\infty}
    \bchpart{n}{\xi}{\eta}
    =
    \sum\limits_{a, b = 0}^{\infty}
    \bchparts{a}{b}{\xi}{\eta},
\end{equation}
where $\bchpart{n}{\xi}{\eta}$ gives all the BCH terms which have
exactly $n$ letters and therefore $n-1$ brackets. Moreover,
$\bchparts{a}{b}{\xi}{\eta}$ stands for all BCH terms which contain
exactly $a$ times the letter $\xi$ and $b$ times the letter $\eta$. We
hence have
\begin{equation}
    \label{subsec:BCHnSumBCHab}
    \bchpart{n}{\xi}{\eta}
    =
    \sum\limits_{a + b = n}
    \bchparts{a}{b}{\xi}{\eta}
\end{equation}
with $a, b \ge 1$ for $n > 1$ and $a, b \ge 0$ for $n = 1$. The
difficulty with the BCH series is that there is no unique way to write
$\bchparts{a}{b}{\xi}{\eta}$ since one can re-arrange terms by
antisymmetry and Jacobi identity without changing the number of
$\xi$'s and $\eta$'s. Luckily, we will need only estimates for
$\bchparts{a}{b}{\xi}{\eta}$ later on. Before going into these
details, we just mention the following well-known formula (e.g. see \cite[part 2.8.12 (c)]{dixmier:1977a}) for the
lowest order terms of the BCH series:
\begin{lemma}
    \label{lemma:BCHFirstOrder}%
    Let $\lie{g}$ be a Lie algebra and $\xi, \eta \in \lie{g}$. Then
    we can write the Baker-Campbell-Hausdorff series up to first order
    in $\eta$ as
    \begin{equation}
        \label{Prelim:BCHFirstOrder}
        \bch{\xi}{\eta}
        =
        \xi
        +
        \sum\limits_{n = 0}^{\infty}
        \frac{B_n^*}{n!}
        \left( \ad_{\xi} \right)^n (\eta)
        +
        \mathcal{O}(\eta^2),
    \end{equation}
    where the Bernoulli numbers $B_n^*$ are defined by the series
    \begin{equation}
        \label{Prelim:BernoulliNumbers}
        \frac{z}{1 - e^{-z}}
        =
        \sum\limits_{n = 0}^{\infty}
        \frac{B_n^*}{n!}
        z^n.
    \end{equation}
\end{lemma}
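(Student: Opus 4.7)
My plan is to work in the completed free Lie algebra on two generators so that all manipulations are purely formal and no convergence questions arise; then the result can be read off from the differential equation satisfied by the BCH series. Concretely, set $Z(t) = \bch{\xi}{t\eta}$, so that formally $\E^{\xi} \E^{t\eta} = \E^{Z(t)}$ with $Z(0) = \xi$. Writing $Z(t) = \xi + t W + \mathcal{O}(t^2)$, the claim reduces to identifying the first derivative
\[
    W = \frac{\mathrm{d}}{\mathrm{d}t}\Big|_{t=0} Z(t)
    = \sum_{n=0}^{\infty} \frac{B_n^*}{n!} (\ad_\xi)^n(\eta).
\]

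The key ingredient is the standard formula for the derivative of the exponential in an associative algebra, namely
\[
    \frac{\mathrm{d}}{\mathrm{d}t} \E^{Z(t)} \cdot \E^{-Z(t)}
    =
    \frac{\E^{\ad_{Z(t)}} - \id}{\ad_{Z(t)}} \bigl( \dot Z(t) \bigr),
\]
which I would either cite from the standard references on Lie theory or prove by a short induction on the coefficients of $\E^{Z(t)}$ expanded as a formal series. Applying this to $\E^{Z(t)} = \E^\xi \E^{t\eta}$ and evaluating at $t = 0$, the left hand side becomes $\E^\xi \eta \E^{-\xi} = \E^{\ad_\xi}(\eta)$, while on the right hand side $Z(0) = \xi$, so
\[
    \E^{\ad_\xi}(\eta)
    =
    \frac{\E^{\ad_\xi} - \id}{\ad_\xi}(W).
\]
Solving formally for $W$ (which is legitimate since $\frac{\E^{\ad_\xi}-\id}{\ad_\xi}$ has the invertible leading term $\id$ in the filtration by bracket length) yields
\[
    W
    =
    \frac{\ad_\xi}{\id - \E^{-\ad_\xi}}(\eta),
\]
after the identity $\frac{z}{1-\E^{-z}} = \frac{z\E^z}{\E^z - 1}$, and expanding the generating function gives exactly $\sum_n \frac{B_n^*}{n!}(\ad_\xi)^n(\eta)$.

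The one step that actually needs care is the derivative-of-the-exponential formula, since everything else is direct book-keeping of power series; but in the free associative algebra modulo the ideal $(t^2)$ this reduces to a combinatorial identity that is established by expanding $\E^{Z(t)} = \sum_k Z(t)^k/k!$ and comparing coefficients. Once that is in place, the expansion of $\frac{\ad_\xi}{\id - \E^{-\ad_\xi}}$ as a formal power series in $\ad_\xi$ with coefficients $B_n^*/n!$ is immediate from the defining generating function \eqref{Prelim:BernoulliNumbers}, and we obtain the claim. Alternatively, one could extract the same expression directly from Dynkin's explicit formula by collecting all summands that are linear in $\eta$ and recognising the resulting series as the expansion of $z/(1-\E^{-z})$, but the differential-equation route is considerably shorter.
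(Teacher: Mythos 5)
Your argument is correct: the right-logarithmic-derivative formula $\frac{\mathrm{d}}{\mathrm{d}t}\E^{Z(t)}\,\E^{-Z(t)} = \frac{\E^{\ad_{Z(t)}}-\id}{\ad_{Z(t)}}\bigl(\dot Z(t)\bigr)$ applied to $Z(t)=\bch{\xi}{t\eta}$ at $t=0$ gives $\E^{\ad_\xi}(\eta)=\frac{\E^{\ad_\xi}-\id}{\ad_\xi}(W)$, and inverting (legitimate in the completed free Lie algebra, since the operator has leading term $\id$) yields $W=\frac{\ad_\xi}{\id-\E^{-\ad_\xi}}(\eta)$, which is the series \eqref{Prelim:BernoulliNumbers} in $\ad_\xi$. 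The paper does not prove this lemma at all but cites \cite[part 2.8.12 (c)]{dixmier:1977a}, and your differential-equation derivation is essentially the standard argument given there, so there is nothing to compare beyond noting that you have supplied the proof the paper delegates to the literature.
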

Recall that the series \eqref{Prelim:BernoulliNumbers} converges
absolutely for $|z| < 2\pi$ and that we have the (quite rough)
estimate
\begin{equation}
    \label{eq:BnEstimate}
    |B_n^*| \le n!
\end{equation}
for the Bernoulli numbers.

%
%

\subsection{A Formula for the Gutt Star Product}
\label{subsec:FormulaGuttStarProduct}

In a next step, we develop some technical tools which will be useful
for proving the continuity of the Gutt star product. First we write
the Gutt star product for $x, y \in \Sym^\bullet(\lie{g})$ as
\begin{equation}
    \label{eq:xGuttStary}
    x \star_z y
    =
    \sum_{n=0}^\infty z^n C_n(x, y)
\end{equation}
with bilinear operators $C_n\colon \Sym^\bullet(\lie{g}) \times
\Sym^\bullet(\lie{g}) \longrightarrow \Sym^\bullet(\lie{g})$ as
usual. Now, the main goal is to use
Proposition~\ref{proposition:PBWGuttDrinfeldBCHWhoElseStarProduct} and
the BCH series to obtain fairly explicit formulas for the
contributions $C_n(x, y)$, up to the knowledge of the BCH series.

From the original work of Gutt \cite[Prop.~1]{gutt:1983a}, see also
\cite[2.8.12 (c)]{dixmier:1977a} as well as \cite[Rem.~5.2.8]{neumaier:1998a}
and \cite[Eq.~2.23]{kathotia:1998a:pre}, we get the following formula for
$\star_z$ whenever one factor, say the second, is linear:
\begin{proposition}
    \label{proposition:GuttStarOneLinearFactor}%
    Let $z \in \mathbb{K}$.
    \begin{propositionlist}
    \item For all $\xi, \eta \in \lie{g}$ and $k \in \mathbb{N}$ we
        have
        \begin{equation}
            \label{Formulas:LinearMonomial1}
            \xi^k \star_z \eta
            =
            \sum\limits_{j=0}^k
            \binom{k}{j} z^j B_j^*
            \xi^{k-j}
            \left( \ad_{\xi} \right)^j (\eta).
        \end{equation}
    \item For  all $k \in \mathbb{N}$ and $\xi_1, \ldots, \xi_k, \eta
        \in \lie{g}$ we have
        \begin{equation}
            \label{Formulas:LinearMonomial2}
            \xi_1 \cdots \xi_k \star_z \eta
            =
            \sum\limits_{j=0}^k
            \frac{1}{k!} \binom{k}{j}
            z^j B_j^*
            \sum\limits_{\sigma \in S_k}
            \xi_{\sigma(1)} \cdots \xi_{\sigma(k - j)}
            [\xi_{\sigma(k - j + 1)},
            [ \ldots [\xi_{\sigma(k)}, \eta] \ldots ]
            ]
            .
        \end{equation}
    \end{propositionlist}
\end{proposition}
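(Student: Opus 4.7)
The plan is to derive both formulas from the Drinfeld description in Proposition \ref{proposition:PBWGuttDrinfeldBCHWhoElseStarProduct} by introducing auxiliary scalar parameters $s, t$ and reading off appropriate coefficients. Substituting $s\xi$ and $t\eta$ yields the formal identity
\begin{equation*}
    \exp(s\xi) \star_z \exp(t\eta)
    =
    \exp\Bigl(\frac{1}{z}\, \bch{zs\xi}{zt\eta}\Bigr),
\end{equation*}
and on the left-hand side the coefficient of $s^k t$ is precisely $\frac{1}{k!}\, \xi^k \star_z \eta$. The whole problem reduces to computing the coefficient of $s^k t$ on the right.

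For part (i), Lemma \ref{lemma:BCHFirstOrder} applied after the rescaling gives
\begin{equation*}
    \frac{1}{z}\, \bch{zs\xi}{zt\eta}
    =
    s\xi
    + t \sum_{n=0}^{\infty} \frac{B_n^*\, (zs)^n}{n!}\, (\ad_{\xi})^n(\eta)
    + \mathcal{O}(t^2).
\end{equation*}
Denoting the coefficient of $t$ by $G_s(\xi,\eta)$, and using that the symmetric algebra is commutative so that the formal identity $\exp(A+B) = \exp(A)\exp(B)$ is legal inside $\Sym^\bullet(\lie{g})[[s,t]]$, one obtains
\begin{equation*}
    \exp\Bigl(\frac{1}{z}\bch{zs\xi}{zt\eta}\Bigr)
    =
    \exp(s\xi)\bigl(1 + t\, G_s(\xi,\eta) + \mathcal{O}(t^2)\bigr).
\end{equation*}
Extracting the coefficient of $s^k t$ on both sides and multiplying by $k!$ produces formula \eqref{Formulas:LinearMonomial1}.

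For part (ii), I would polarize the formula from part (i) by substituting $\xi \rightsquigarrow \lambda_1 \xi_1 + \cdots + \lambda_k \xi_k$ and extracting the coefficient of $\lambda_1 \lambda_2 \cdots \lambda_k$. On the left, the multinomial expansion of $\xi^k$ combined with the commutativity of the symmetric product gives multilinear coefficient $k!\, \xi_1 \cdots \xi_k$, so the left side contributes $k!\, \xi_1 \cdots \xi_k \star_z \eta$. On the right, $\xi^{k-j}(\ad_{\xi})^j(\eta)$ expands as a sum over ordered tuples $(i_1,\ldots,i_k)\in\{1,\ldots,k\}^k$, and its coefficient of $\lambda_1 \cdots \lambda_k$ is $\sum_{\sigma\in S_k} \xi_{\sigma(1)} \cdots \xi_{\sigma(k-j)} [\xi_{\sigma(k-j+1)},[\ldots,[\xi_{\sigma(k)},\eta]\ldots]]$. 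Dividing by $k!$ yields formula \eqref{Formulas:LinearMonomial2}.

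No genuine analytic obstacle arises, since everything takes place in the formal power series ring $\Sym^\bullet(\lie{g})[[s,t,\lambda_1,\ldots,\lambda_k]]$ and the exponential series converges as a formal series because its argument vanishes at $s=t=0$. The only mildly subtle point worth flagging is the use of $\exp(A+B)=\exp(A)\exp(B)$ inside the symmetric algebra, which is justified precisely because the symmetric product is commutative.
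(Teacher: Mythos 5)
Your proposal is correct and follows essentially the same route as the paper: part \textit{(i)} by combining Lemma~\ref{lemma:BCHFirstOrder} with the exponential identity of Proposition~\ref{proposition:PBWGuttDrinfeldBCHWhoElseStarProduct} and reading off the coefficient of $s^k t$, and part \textit{(ii)} by polarization (your extraction of the coefficient of $\lambda_1 \cdots \lambda_k$ is the same operation as the paper's differentiation $\partial^k / \partial t_1 \cdots \partial t_k$ at zero). Your version is in fact more detailed than the paper's sketch, and you correctly identify and justify the one subtle step, namely the use of $\exp(A+B)=\exp(A)\exp(B)$ in the commutative algebra $\Sym^\bullet(\lie{g})[[s,t]]$.
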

\begin{proof}
    For convenience, we sketch the proof: the first part is
    essentially Lemma~\ref{lemma:BCHFirstOrder} together with
    Proposition~\ref{proposition:PBWGuttDrinfeldBCHWhoElseStarProduct}.
    The second is then obtained by polarization from the first:
    Set
    \begin{equation*}
        \Xi
        =
        \Xi(t_1, \ldots, t_k)
        =
        \sum\limits_{j=1}^k
        t_j \xi_j
    \end{equation*}
    for parameters $t_1, \ldots, t_k \in \mathbb{R}$. Then we get by
    differentiating
    \begin{equation*}
        \frac{\partial^k}{\partial t_1 \cdots \partial t_k}
        \Xi^k
        =
        k! \xi_1 \cdots \xi_k.
    \end{equation*}
    This formal differentiation in Equation~\eqref{Formulas:LinearMonomial1}
    gives the result.
\end{proof}

As a consequence, the previous proposition determines the explicit form of the
bilinear operators $C_n$ of the Gutt star product whenever one factor
is linear. In particular, by associativity we have
\begin{equation}
    \label{Eq:Formulas:MultipleStars}
    \xi_1 \star_z \cdots \star_z \xi_k
    =
    \sum\limits_{\substack{
        1 \leq j \leq k-1 \\
        i_j \in \{0, \ldots, j\}
      }
    }
    z^{i_1 + \cdots + i_{k-1}}
    C_{i_{k-1}}
    \left(
        \cdots
        C_{i_2}\left(
            C_{i_1}\left(\xi_1, \xi_2 \right), \xi_3
        \right),
        \ldots, \xi_{k}
    \right)
\end{equation}
for $2\leq k \in \mathbb{N}$ and $\xi_1, \ldots, \xi_k \in \lie{g}$.
While this formula is easy to handle for small $k$, it becomes quite
tedious in general.

For our continuity estimates we need to go beyond linear terms: both
factors in the product have to be general. By differentiating the identity
form Proposition~\ref{proposition:PBWGuttDrinfeldBCHWhoElseStarProduct},
we get the next result.
\begin{lemma}
    \label{lemma:2MonomialsFormula1}%
    Let $\lie{g}$ be a Lie algebra.
    \begin{lemmalist}
    \item \label{item:Gstar2Monomials}
        Let $\xi, \eta \in \lie{g}$. We have
        \begin{equation}
            \xi^k \star_z \eta^\ell
            =
            \sum\limits_{n=0}^{k + \ell - 1}
            z^n
            C_n \left(\xi^k, \eta^\ell \right),
        \end{equation}
        with $C_0(\xi^k, \eta^\ell) = \xi^k\eta^\ell$ and
        \begin{equation}
            \label{Formulas:2MonomialsFormula1}
            C_n \left(\xi^k, \eta^{\ell} \right)
            =
            \frac{k! \ell!}{(k + \ell - n)!}
            \sum\limits_{\substack{a_1, b_1, \ldots, a_r, b_r \geq 0 \\
                a_i + b_i \geq 1 \\
                a_1 + \cdots + a_r = k \\
                b_1 + \cdots + b_r = \ell
              }
            }
            \bchparts{a_1}{b_1}{\xi}{\eta}
            \cdots
            \bchparts{a_r}{b_r}{\xi}{\eta}
        \end{equation}
        for $k, \ell \in \mathbb{N}$ and $n \ge 1$, where we set $r =
        k + \ell - n$ for abbreviation.
    \item \label{item:Gstar2MonomialsPolarized} Denote by
        $\bchtilde{a}{b}{\argument}{\argument}$ the unique
        $a+b$-linear map, symmetric in the first $a$ and in the last
        $b$ arguments, such that
        \begin{equation}
            \label{Formulas:BCHTilde}
            \bchtilde{a}{b}{\xi, \ldots, \xi}{\eta, \ldots, \eta}
            =
            \bchparts{a}{b}{\xi}{\eta}
        \end{equation}
        for $\xi, \eta \in \lie{g}$. Then we have
        \begin{align}
            \label{Formulas:2MonomialsFormula2}
            C_n \left(
                \xi_1 \cdots \xi_k; \eta_1 \cdots \eta_{\ell}
            \right)
            & =
            \frac{1}{(k + \ell - n)!}
            \sum\limits_{\sigma \in S_k, \tau \in S_{\ell}}
            \sum\limits_{\substack{a_1, b_1, \ldots, a_r, b_r \geq 0 \\
                a_i + b_i \geq 1 \\
                a_1 + \cdots + a_r = k \\
                b_1 + \cdots + b_r = \ell
              }
            }
            \\
            \nonumber
            & \qquad
            \bchtilde{a_i}{b_i}
            {\xi_{\sigma(1)}, \ldots, \xi_{\sigma(a_1)}}
            {\eta_{\tau(1)}, \ldots, \eta_{\tau(b_1)}}
            \cdots
            \\
            \nonumber
            & \qquad
            \bchtilde{a_r}{b_r}
            {\xi_{\sigma(k - a_r + 1)}, \ldots, \xi_{\sigma(k)}}
            {\eta_{\tau(\ell - b_r + 1)}, \ldots, \eta_{\tau(\ell)}}.
        \end{align}
        for $\xi_1, \ldots, \xi_k, \eta_1, \ldots, \eta_{\ell} \in
        \lie{g}$.
    \end{lemmalist}
\end{lemma}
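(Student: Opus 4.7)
The plan is to deduce both formulas from the Drinfel'd identity \eqref{eq:DrinfeldIsGuttStar} by comparing coefficients of formal power series, and then obtain Part \emph{ii.)} from Part \emph{i.)} by polarization.

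First I would prove Part \emph{i.)}. The idea is to expand both sides of $\exp(\xi) \star_z \exp(\eta) = \exp\bigl(\tfrac{1}{z}\bch{z\xi}{z\eta}\bigr)$ as formal power series in $\xi$, $\eta$ and $z$. The left-hand side is $\sum_{k,\ell,n}\frac{z^n}{k!\,\ell!} C_n(\xi^k,\eta^\ell)$ by definition of the $C_n$. For the right-hand side I would first use the homogeneity $\bchparts{a}{b}{z\xi}{z\eta} = z^{a+b}\bchparts{a}{b}{\xi}{\eta}$ to rewrite $\tfrac{1}{z}\bch{z\xi}{z\eta}$ as a power series in $z$ starting at $z^0$, and then expand the exponential as
\[
\sum_{r=0}^\infty \frac{1}{r!} \sum_{\substack{a_1,b_1,\ldots,a_r,b_r\ge 0 \\ a_i+b_i\ge 1}} z^{\sum_i(a_i+b_i)-r}\, \bchparts{a_1}{b_1}{\xi}{\eta}\cdots\bchparts{a_r}{b_r}{\xi}{\eta}.
\]
Isolating the contributions with total $\xi$-degree $k$, total $\eta$-degree $\ell$ and total $z$-degree $n$ forces $\sum a_i = k$, $\sum b_i = \ell$ and $r = k+\ell-n$. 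Comparing coefficients and multiplying by $k!\,\ell!$ then yields \eqref{Formulas:2MonomialsFormula1}. For $n = 0$ one checks directly that only tuples with each $(a_i,b_i) \in \{(1,0),(0,1)\}$ contribute, giving $C_0(\xi^k,\eta^\ell) = \xi^k\eta^\ell$.

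Next I would derive Part \emph{ii.)} from \eqref{Formulas:2MonomialsFormula1} by polarization. Substitute $\xi = \sum_{i=1}^k s_i\xi_i$ and $\eta = \sum_{j=1}^\ell t_j \eta_j$, apply $\frac{1}{k!\,\ell!}\partial_{s_1}\!\cdots\partial_{s_k}\partial_{t_1}\!\cdots\partial_{t_\ell}$, and evaluate at $s = t = 0$. The standard polarization identity $\partial_{s_1}\!\cdots\partial_{s_k}\bigl(\sum_i s_i\xi_i\bigr)^k\big|_{s=0} = k!\,\xi_1\cdots\xi_k$ in the symmetric algebra turns the left-hand side into $C_n(\xi_1\cdots\xi_k, \eta_1\cdots\eta_\ell)$. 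On the right-hand side, each factor $\bchparts{a_i}{b_i}{\sum s\xi}{\sum t\eta}$ expands by multilinearity and the defining equation \eqref{Formulas:BCHTilde} as
\[
\sum_{i_1,\ldots,i_{a_i}}\sum_{j_1,\ldots,j_{b_i}} s_{i_1}\cdots s_{i_{a_i}} t_{j_1}\cdots t_{j_{b_i}}\,\bchtilde{a_i}{b_i}{\xi_{i_1},\ldots,\xi_{i_{a_i}}}{\eta_{j_1},\ldots,\eta_{j_{b_i}}},
\]
and the Leibniz rule extracts only monomials $s_1\cdots s_k\,t_1\cdots t_\ell$, which correspond to ordered partitions of $\{1,\ldots,k\}$ into blocks of sizes $a_1,\ldots,a_r$ together with ordered partitions of $\{1,\ldots,\ell\}$ into blocks of sizes $b_1,\ldots,b_r$. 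Reparametrising by $\sigma \in S_k$ and $\tau \in S_\ell$ overcounts by $\prod_i a_i!\,b_i!$, but this is precisely cancelled by the factor $\prod_i a_i!\,b_i!$ arising from the coefficient extraction in each individual factor. Combined with the prefactor $k!\,\ell!/(k+\ell-n)!$ from \eqref{Formulas:2MonomialsFormula1} and the $1/(k!\,\ell!)$ from the polarization operator, this produces exactly \eqref{Formulas:2MonomialsFormula2}.

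The main obstacle is the combinatorial bookkeeping in the polarization step, where three different sources of factorials interact: the multinomial expansion of each $\bchparts{a_i}{b_i}{\cdot}{\cdot}$, the Leibniz rule distributing $k+\ell$ derivatives over $r = k+\ell-n$ factors, and the symmetry of $\bchtilde{a_i}{b_i}{}{}$ in its first $a_i$ and last $b_i$ arguments. A disciplined accounting via ordered set-partitions, or equivalently via decorated permutations modulo a symmetry overcount, makes these cancellations transparent; the remaining steps are routine formal-power-series manipulations.
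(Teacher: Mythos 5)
Your proposal is correct and follows essentially the same route as the paper: both parts are extracted from the Drinfel'd identity $\exp(\xi) \star_z \exp(\eta) = \exp\bigl(\tfrac{1}{z}\bch{z\xi}{z\eta}\bigr)$ by expanding the exponential of the BCH series, using the bihomogeneity of $\bchparts{a}{b}{\argument}{\argument}$ to sort by powers of $z$, and matching degrees, with Part \emph{ii.)} then obtained by the same polarization in auxiliary parameters $s_i$, $t_j$. Your extra care with the factorial bookkeeping in the polarization step is a welcome elaboration of what the paper leaves implicit, but it does not change the argument.
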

\begin{proof}
    We consider $z \ne 0$ since the claim is trivial for $z =
    0$. Using
    Proposition~\ref{proposition:PBWGuttDrinfeldBCHWhoElseStarProduct}
    we get the star product of $\xi^k$ and $\eta^\ell$ by
    differentiating
    \begin{align*}
        \xi^k \star_z \eta^\ell
        &=
        \frac{\partial^k}{\partial t^k}
        \frac{\partial^{\ell}}{\partial s^{\ell}}
        \At{t,s = 0}
        \exp\left(
            \frac{1}{z}
            \bch{zt\xi}{sz\eta}
        \right) \\
        &=
        \frac{\partial^k}{\partial t^k}
        \frac{\partial^{\ell}}{\partial s^{\ell}}
        \At{t,s = 0}
        \sum_{r=0}^\infty \frac{1}{r!} \frac{1}{z^r}
        \left(\bch{zt\xi}{zs\eta}\right)^r \\
        &=
        \frac{\partial^k}{\partial t^k}
        \frac{\partial^{\ell}}{\partial s^{\ell}}
        \At{t,s = 0}
        \sum_{r=0}^\infty \frac{1}{r!} \frac{1}{z^r}
        \left(
            \sum_{j=0}^{k+\ell}
            \bchpart{j}{zt\xi}{zs\eta}
        \right)^r \\
        &=
        \sum_{r=0}^{k+\ell} \frac{1}{r!} \frac{z^{k+\ell}}{z^r}
        k!\ell!
        \sum_{\substack{
            a_1, b_1, \ldots, a_r, b_r \geq 0 \\
            a_i + b_i \geq 1 \\
            a_1 + \cdots + a_r = k \\
            b_1 + \cdots + b_r = \ell
          }
        }
        \bchparts{a_i}{b_i}{\xi}{\eta}
        \cdots
        \bchparts{a_r}{b_r}{\xi}{\eta}.
    \end{align*}
    Here we have used that the $k$-th derivative by $t$ at $t = 0$
    gives $k!$ times the coefficient of $\xi^k$ and analogously for
    the $\ell$-th derivative by $s$. The bounds on the parameters
    $a_1, \ldots, a_r, b_1, \ldots, b_r$ and thereby the bounds on $r$
    originate from the fact that the BCH series has no constant
    term. Moreover, higher powers of the BCH series as $k + \ell$ will
    clearly not contain terms we need. The second formula is obtained by
    polarizing the first: We introduce again parameters $t_i, s_j \in
    \mathbb{R}$ with $i = 1, \ldots, k$ and $j = 1, \ldots, \ell$ setting
    \begin{equation*}
    	    \Xi
    	    =
    	    \sum\limits_{i=1}^k
    	    t_i \xi_i
    	    \quad \text{ and } \quad
    	    \Eta
    	    =
    	    \sum\limits_{j=1}^{\ell}
    	    s_j \eta_j.
    \end{equation*}
    Differentiating $C_n(\Xi, \Eta)$ as in
    Formula~\eqref{Formulas:2MonomialsFormula1} once by every
    parameter and dividing by $k! \ell!$, we get permutations of all
    $\xi_i$ and all $\eta_j$ and hence
    Equation~\eqref{Formulas:2MonomialsFormula2}.
\end{proof}
\begin{remark}
    The importance of this formula is that we have reduced the
    complexity of $\star_z$ to the difficulties to compute the
    homogeneous parts of the BCH series. This is of course still a
    complicated and tedious problem but luckily we are only interested
    in estimating the terms $\bchparts{a}{b}{\xi}{\eta}$ instead of
    computing them explicitly.
\end{remark}

%
%

\section{The Continuity of $\star_z$}
\label{sec:LCTopolgy}

The next step is finding continuity estimates for $\star_z$. All
estimates which are done in the next three sections follow mostly the
same scheme: We extend maps from $\Sym_R^\bullet(\lie{g})$ to
$\Tensor_R^\bullet(\lie{g})$ by using the symmetrization map
$\Symmetrizer$ beforehand. Our first examples for this are the Gutt
star product and the $C_n$-operators from Equation
\eqref{eq:xGuttStary}. Set
\begin{equation}
    \label{eq:TheOstarMap}
    \ostar_z \colon
    \Tensor_R^\bullet(\lie{g})
    \tensor
    \Tensor_R^\bullet(\lie{g})
    \longrightarrow
    \Sym_R^\bullet(\lie{g})
    \quad
    \textrm{with}
    \quad
    \ostar_z
    =
    \star_z
    \circ
    (\Symmetrizer \tensor \Symmetrizer)
\end{equation}
and analogously for the $C_n$. It is clear that all extended maps
coincide with the original maps on $\Sym_R^\bullet(\lie{g})$. Then, we
use the AE-property for the seminorms (which is always valid for
locally convex nilpotent Lie algebras), to estimate Lie
brackets. Finally, we use a feature of the projective tensor product,
in order to generalize statements about factorizing tensors to arbitrary
ones. This is done once explicitly at the end of the proof of
Proposition~\ref{Prop:LCAna:Continuity1}. We then just refer to this
construction since it always works analogously.

%
%

\subsection{A Direct Continuity Result}
\label{subsec:DirectContinuity}

For a word $w$ in the two letters $\xi$ and $\eta$ we denote by $[w]$
the unique Lie bracket expression of this word, where we have nested
the Lie brackets to the left in the sense that
\begin{equation}
    \label{eq:LieLeftNestedWord}
    [\xi \eta \eta \ldots ]
    =
    [ \ldots [[\xi, \eta], \eta], \ldots ].
\end{equation}
Moreover, $|w|$ denotes the number of letters, i.e. the length of the
word.
\begin{lemma}
    \label{lemma:BCHTermsEstiamte}%
    Let $\lie{g}$ be a AE-Lie algebra, $p$ a continuous seminorm, $q$
    an asymptotic estimate for it.
    \begin{lemmalist}
    \item \label{item:GoldbergThompsonBCH} For $n \in \mathbb{N}$
        there are numbers $g_w \in \mathbb{Q}$ such that for $\xi,
        \eta \in \lie{g}$ one has
        \begin{equation}
            \label{LCAna:GoldbergThompsonBCH}
            \bchpart{n}{\xi}{\eta}
            =
            \sum\limits_{|w| = n}
            \frac{g_w}{n}
            [w].
        \end{equation}
    \item \label{item:ThompsonEstimate} The coefficients $g_w$ can be
        chosen to fulfil the estimate
        \begin{equation}
            \label{LCAna:ThompsonEstimate}
            \sum\limits_{|w| = n}
            \left| \frac{g_w}{n} \right|
            \leq
            \frac{2}{n}.
        \end{equation}
    \item \label{item:AEonWords} For every word $w$ which consists
        of $a$ times the letter $\xi$ and $b$ times the letter $\eta$,
        we have
        \begin{equation}
            \label{LCAna:AEonWords}
            p([w])
            \leq
            q(\xi)^a q(\eta)^b.
        \end{equation}
    \item \label{item:BCHEstimate}
        Let $a,b \in \mathbb{N}$ and $\xi_1, \ldots, \xi_a, \eta_1, \ldots,
        \eta_b \in \lie{g}$. We have the estimate
        \begin{equation}
            \label{LCAna:BCHEstimate}
            p \left(
                \bchtilde{a}{b}
                {\xi_1, \ldots, \xi_a}
                {\eta_1, \ldots, \eta_b}
            \right)
            \leq
            \frac{2}{a + b}
            q(\xi_1) \cdots q(\xi_a)
            q(\eta_1) \cdots q(\eta_b).
        \end{equation}
    \end{lemmalist}
\end{lemma}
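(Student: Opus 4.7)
The plan is to handle the four parts in order. Parts~\refitem{item:GoldbergThompsonBCH} and~\refitem{item:ThompsonEstimate} are the classical Goldberg-Thompson theorem about the combinatorics of the Baker-Campbell-Hausdorff series, which I would quote from the literature rather than reprove. The underlying idea for~\refitem{item:GoldbergThompsonBCH} is to start from any standard explicit presentation of $\bchpart{n}{\xi}{\eta}$ (for instance the Dynkin formula) and use antisymmetry of the Lie bracket to rewrite every bracket monomial in the left-nested form $[w]$; collecting the resulting rational coefficients and factoring out $1/n$ gives the displayed form. The $\ell^1$-estimate in~\refitem{item:ThompsonEstimate} is the more delicate statement and follows from Goldberg's integral representation of the coefficients $g_w$.

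Part~\refitem{item:AEonWords} is immediate from Definition~\ref{def:AE}: the left-nested bracket $[w]$ is a word $w_{a+b}(x_1,\dots,x_{a+b})$ in the sense of~\refitem{item:AsymptoticEstimate}, made out of $a+b-1$ Lie products with $a$ of the arguments equal to $\xi$ and $b$ equal to $\eta$. Applying the AE-estimate with asymptotic estimate $q$ for $p$ yields $p([w]) \leq q(\xi)^a q(\eta)^b$ at once, the value being independent of the order in which the arguments are listed.

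For~\refitem{item:BCHEstimate}, my plan is to polarize the identity
\[
\bchparts{a}{b}{\xi}{\eta}
=
\sum_{\substack{|w| = a+b \\ w \text{ has exactly } a\ \xi\text{'s}}}
\frac{g_w}{a+b}\,[w]
\]
obtained from~\refitem{item:GoldbergThompsonBCH}. Applying the standard polarization formula in the $\xi$- and $\eta$-slots separately and invoking the uniqueness characterization~\eqref{Formulas:BCHTilde} produces
\[
\bchtilde{a}{b}{\xi_1,\dots,\xi_a}{\eta_1,\dots,\eta_b}
=
\frac{1}{a!\,b!}
\sum_{\substack{|w| = a+b \\ w \text{ has exactly } a\ \xi\text{'s}}}
\frac{g_w}{a+b}
\sum_{\sigma \in S_a,\ \tau \in S_b}
[w]^{\sigma,\tau},
\]
where $[w]^{\sigma,\tau}$ denotes the left-nested bracket obtained from $[w]$ by filling its $\xi$-positions in order with $\xi_{\sigma(1)},\dots,\xi_{\sigma(a)}$ and its $\eta$-positions in order with $\eta_{\tau(1)},\dots,\eta_{\tau(b)}$. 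The triangle inequality together with~\refitem{item:AEonWords} then bounds each $[w]^{\sigma,\tau}$ uniformly by $q(\xi_1)\cdots q(\xi_a)\,q(\eta_1)\cdots q(\eta_b)$, independently of $\sigma$, $\tau$, and $w$; the $a!\,b!$ terms of the symmetrization cancel the $1/(a!b!)$ prefactor, and~\refitem{item:ThompsonEstimate} controls the remaining sum over $w$ by $2/(a+b)$.

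The main obstacle is less technical than organizational: one has to quote parts~\refitem{item:GoldbergThompsonBCH} and~\refitem{item:ThompsonEstimate} precisely from the literature and track the polarization bookkeeping carefully. Once these are in place, the AE-property of $\lie{g}$ closes the argument by controlling each bracket expression uniformly in its bracketing pattern and in the ordering of its arguments.
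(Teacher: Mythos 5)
Your proposal is correct and follows essentially the same route as the paper: parts \refitem{item:GoldbergThompsonBCH} and \refitem{item:ThompsonEstimate} are quoted from Goldberg and Thompson, part \refitem{item:AEonWords} is read off from Definition~\ref{def:AE}, and part \refitem{item:BCHEstimate} is obtained by polarizing the word expansion and combining the AE-bound with the Thompson estimate. If anything, your explicit bookkeeping of the $\frac{1}{a!\,b!}\sum_{\sigma,\tau}$ symmetrization in part \refitem{item:BCHEstimate} is more detailed than the paper's, which asserts the corresponding inequality directly.
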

\begin{proof}
    Goldberg found a form of writing the BCH series as a series of
    words in two letters $X$ and $Y$ with certain coefficients
    \cite{goldberg:1956a} called $c_X(s_1, \ldots, s_m)$ or $c_Y(s_1,
    \ldots, s_m)$ depending whether a word begins with the letter $X$
    or $Y$. Here, $c_X(s_1, \ldots, s_m)$ belongs to the word
    \begin{equation*}
        X^{s_1} Y^{s_2} \ldots (X \textrm{ or } Y)^{s_m},
    \end{equation*}
    where the word ends with the letter $X$ or $Y$ if $m$ is odd or
    even, respectively. In \cite{thompson:1982a}, Thompson put this
    into Lie bracket form and proved that using $g_w = c_X$ [or $g_w =
    c_Y$] one gets identity \eqref{LCAna:GoldbergThompsonBCH}. In
    \cite{thompson:1989a} Thompson put estimates on these coefficients
    and proved the estimate \eqref{LCAna:ThompsonEstimate}. The
    inequality \eqref{LCAna:AEonWords} is due to the AE-property,
    which does not see the way how brackets are set but just counts
    the number of $\xi$'s and $\eta$'s in the whole expression.  We
    use the notation $|w|_{\xi}$ for the number of $\xi$'s appearing
    in a word $w$ and $|w|_{\eta}$ for the number of
    $\eta$'s. Clearly, $|w| = |w|_{\xi} + |w|_{\eta}$. With
    \eqref{LCAna:ThompsonEstimate} and the AE-property of $\lie{g}$,
    we get
    \begin{align*}
        p \left(
            \bchtilde{a}{b}
            {\xi_1, \ldots, \xi_a}
            {\eta_1, \ldots, \eta_b}
        \right)
        & \leq
        \sum\limits_{\substack{
            |w|_{\xi} = a \\
            |w|_{\eta} = b
          }}
        p \left(
            \frac{g_w}{a + b}
            [w]
        \right)
        \\
        & \leq
        \sum\limits_{\substack{
            |w|_{\xi} = a \\
            |w|_{\eta} = b
          }}
        \left| \frac{g_w}{a + b} \right|
        q(\xi_1) \cdots q(\xi_a)
        q(\eta_1) \cdots q(\xi_b)
        \\
        & \leq
        \frac{2}{a + b}
        q(\xi_1) \cdots q(\xi_a)
        q(\eta_1) \cdots q(\eta_b).
    \end{align*}
\end{proof}

In a next step, we want to approach the estimate via the formula
\begin{equation}
    \label{eq:GstarOfXisAndEtas}
    \xi_1 \cdots \xi_k \star_z \eta_1 \cdots \eta_{\ell}
    =
    \sum\limits_{n=0}^{k + \ell -1}
    z^n
    C_n (\xi_1 \cdots \xi_k, \eta_1 \cdots \eta_{\ell}).
\end{equation}
To shorten the very long expression from
Equation~\eqref{Formulas:2MonomialsFormula2}, we occasionally
abbreviate the summations by
\begin{equation}
    C_n\left(
        \xi_1 \cdots \xi_k,
        \eta_1 \cdots \eta_{\ell}
    \right)
    =
    \frac{1}{r!}
    \sum\limits_{\sigma, \tau}
    \sum\limits_{a_i, b_j}
    \bchtilde{a_1}{b_1}{\xi_{\sigma(i)}}{\eta_{\tau(j)}}
    \cdots
    \bchtilde{a_r}{b_r}{\xi_{\sigma(i)}}{\eta_{\tau(j)}},
\end{equation}
meaning the summations as given in
Lemma~\ref{lemma:2MonomialsFormula1} and using $r = k + \ell - n$.
\begin{proposition}
    \label{Prop:LCAna:Continuity1}%
    Let $\lie{g}$ be an AE-Lie algebra, $R \geq 0$, $p$ a continuous
    seminorm with an asymptotic estimate $q$, and $z \in \mathbb{K}$.
    \begin{propositionlist}
    \item \label{item:CnOperatorEstimate} For $n \in \mathbb{N}$, the
        operator $C_n$ is continuous and for all $x, y \in
        \Tensor_R^{\bullet}(\lie{g})$ we have the estimate
    	\begin{equation}
            \label{LCAna:CnOperators}
            p_R \left( C_n(x,y) \right)
            \leq
            \frac{n!^{1 - R}}{2 \cdot 8^n}
            (32 q)_R (x)
            (32 q)_R (y).
    	\end{equation}
    \item \label{item:LCAna:Continuity1} For $R \geq 1$, the Gutt star
        product is continuous and for all $x, y \in
        \Tensor_R^{\bullet}(\lie{g})$ we have the estimate:
        \begin{equation}
            \label{LCAna:Continuity1}
            p_R(x \ostar_z y)
            \leq
            (c q)_R(x) (c q)_R(y)
        \end{equation}
        with $c = 32(|z| + 1)$. Hence, the estimate
        \eqref{LCAna:Continuity1} holds on
        $\widehat{\Sym}_R^\bullet(\lie{g})$ for all $z \in
        \mathbb{K}$, too.
    \end{propositionlist}
\end{proposition}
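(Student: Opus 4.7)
The plan is to prove (i) on homogeneous factorizing tensors and extend by the universal property of $\tensor_\pi$, using the polarized formula~\eqref{Formulas:2MonomialsFormula2} together with the BCH estimate from Lemma~\ref{lemma:BCHTermsEstiamte}. Part (ii) will then follow by summing $|z|^n p_R(C_n)$ and controlling the resulting finite partial geometric series via a binomial bound.

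The key steps for (i) are as follows. First, since $C_n$ has been extended to $\Tensor_R^\bullet(\lie{g})$ via pre-composition with $\Symmetrizer \tensor \Symmetrizer$, and $\Symmetrizer$ does not enlarge the $\pi$-seminorms by~\eqref{eq:ContinuityForSn}, one may assume $x$ and $y$ are homogeneous of degrees $k$ and $\ell$; the universal property of the projective tensor topology on $\lie{g}^{\tensor_\pi k}$ then reduces the estimate to the case $x = \xi_1 \tensor \cdots \tensor \xi_k$, $y = \eta_1 \tensor \cdots \tensor \eta_\ell$. Second, substituting into formula~\eqref{Formulas:2MonomialsFormula2} writes $C_n(x, y)$ as a sum indexed by permutations $\sigma \in S_k$, $\tau \in S_\ell$, and partitions $(a_i, b_i)_{i=1}^r$ (with $r = k+\ell-n$) satisfying $a_i+b_i \geq 1$, $\sum a_i = k$, $\sum b_i = \ell$. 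Third, each factor $\widetilde{\mathrm{BCH}}_{a_i,b_i}$ is bounded by $\frac{2}{a_i+b_i}$ times products of $q$'s via Lemma~\ref{lemma:BCHTermsEstiamte}, and the continuity $p^r(v_1 \cdots v_r) \leq p(v_1) \cdots p(v_r)$ of the symmetric product for $v_i \in \lie{g}$ converts this into a bound of the form
\begin{equation*}
    r!^R p^r(C_n(x, y)) \leq r!^{R-1} k!\,\ell! \cdot S_{k,\ell,n} \cdot q(\xi_1)\cdots q(\xi_k)\, q(\eta_1)\cdots q(\eta_\ell),
\end{equation*}
where the factor $r!^R$ comes from the weight of $p_R$ on the degree-$r$ component and $S_{k,\ell,n} = \sum_{(a_i,b_i)} \prod_{i=1}^r \frac{2}{a_i+b_i}$.

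The main obstacle is the combinatorial estimate on $S_{k,\ell,n}$, which must be sharp enough that the resulting bound matches the target $\frac{n!^{1-R}}{2 \cdot 8^n} k!^R \ell!^R 32^{k+\ell}$. My plan is to group partitions by $m_i = a_i+b_i$: for each composition $(m_1,\ldots,m_r)$ of $k+\ell$ into $r$ positive parts, the number of $(a_i,b_i)$ with $a_i+b_i = m_i$ is at most $\prod_i(m_i+1)$, and $\prod_i \frac{2(m_i+1)}{m_i} \leq 4^r$. Summing over the $\binom{k+\ell-1}{r-1} \leq 2^{k+\ell}$ such compositions yields $S_{k,\ell,n} \leq 4^r \cdot 2^{k+\ell} = 8^{k+\ell}/4^n$. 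The constants $32$, $8$, and $2$ on both sides of the target are then chosen precisely so that, after substituting this bound and using the identity $r+n = k+\ell$ to rearrange the factorial quotients $r!^{R-1}$, $n!^{R-1}$ against $k!^{R-1}\ell!^{R-1}$, the inequality closes.

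For part (ii), I sum the estimate from (i) over $n$ to obtain $p_R(x \ostar_z y) \leq \sum_n |z|^n p_R(C_n(x,y))$; on any pair $(x_k, y_\ell)$ of homogeneous components, $C_n(x_k, y_\ell) = 0$ for $n \geq k+\ell$, so the $n$-sum is finite. For $R \geq 1$, $n!^{1-R} \leq 1$, hence the $n$-sum reduces to $\frac{1}{2}\sum_{n=0}^{k+\ell-1}(|z|/8)^n \leq \frac{1}{2}(1+|z|/8)^{k+\ell}$ by comparison with the binomial expansion. Multiplying by $32^{k+\ell}$ yields $(32+4|z|)^{k+\ell} \leq c^{k+\ell}$ for $c = 32(|z|+1)$, producing the desired estimate $p_R(x \ostar_z y) \leq (cq)_R(x)(cq)_R(y)$; the extension to $\widehat{\Sym}_R^\bullet(\lie{g})$ is then immediate by continuity.
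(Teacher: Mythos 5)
Your proposal is correct and follows essentially the same route as the paper: reduction to factorizing tensors via $\Symmetrizer$ and the projective-tensor infimum argument, the polarized formula~\eqref{Formulas:2MonomialsFormula2}, the Goldberg--Thompson bound $\frac{2}{a_i+b_i}$ from Lemma~\ref{lemma:BCHTermsEstiamte}, and for (ii) the observation that $n!^{1-R}\leq 1$ for $R\geq 1$ together with a finite geometric/binomial bound absorbed into the constant $c=32(|z|+1)$. The only deviation is cosmetic: you bound the partition sum by grouping over $m_i=a_i+b_i$ and keeping the factors $2/m_i$, which gives $4^r\,2^{k+\ell}$, slightly sharper than the paper's cruder $2^r\binom{k+\ell+2r-1}{k+\ell}\leq 2^{4(k+\ell)-3n-1}$, but both close with the same final constants.
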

\begin{proof}
    Let us use $r = k + \ell - n$ as before and recall that the
    products are taken in the symmetric algebra.  Then we can use
    Equation \eqref{Formulas:2MonomialsFormula1} from
    Lemma~\ref{lemma:2MonomialsFormula1} and put estimates on it.  Let
    $p$ be a continuous seminorm and let $q$ be an asymptotic estimate
    for it. Then we get
    \begin{align*}
        p_R \big(
            C_n \big(
                \xi_1 \tensor \cdots \tensor \xi_k, &
                \eta_1 \tensor \cdots \tensor \eta_{\ell}
            \big)
        \big)
        =
        p_R \bigg(
         	\frac{1}{r!}
			\sum\limits_{\sigma, \tau}
			\sum\limits_{a_i, b_j}
			\bchtilde{a_1}{b_1}{\xi_{\sigma(i)}}{\eta_{\tau(j)}}
			\cdots
			\bchtilde{a_r}{b_r}{\xi_{\sigma(i)}}{\eta_{\tau(j)}}
        \bigg)
        \\
        & \ot{(a)}{\leq}
        \frac{1}{r!}
        r!^R
        \sum\limits_{\sigma, \tau}
		\sum\limits_{a_i, b_j}
        p \left(
            \bchtilde{a_1}{b_1}{\xi_{\sigma(i)}}{\eta_{\tau(j)}}
        \right)
        \cdots
        p \left(
            \bchtilde{a_r}{b_r}{\xi_{\sigma(i)}}{\eta_{\tau(j)}}
        \right)
        \\
        & \ot{(b)}{\leq}
        \frac{1}{r!^{1-R}}
        \sum\limits_{\sigma, \tau}
		\sum\limits_{a_i, b_j}
		\frac{2}{a_1 + b_1}
		\ldots
		\frac{2}{a_r + b_r}
		q(\xi_1) \cdots q(\xi_k)
		q(\eta_1) \cdots q(\eta_{\ell})
        \\
        & \ot{(c)}{\leq}
		q(\xi_1) \cdots q(\xi_k)
		q(\eta_1) \cdots q(\eta_{\ell})
        2^r
        \frac{k! \ell!}{r!^{1-R}}
        \sum\limits_{a_i, b_j}
		1,
    \end{align*}
    where we just used the continuity estimate for the symmetric
    tensor product in ($a$), Lemma~\ref{lemma:BCHTermsEstiamte},
    \refitem{item:BCHEstimate}, in ($b$) and $\frac{2}{a_i + b_i} \leq
    2$ in ($c$). We estimate the number of terms in the sum and get
    \begin{equation*}
        \sum\limits_{\substack{a_1, b_1, \ldots, a_r, b_r \geq 0 \\
            a_i + b_i \geq 1 \\
            a_1 + \cdots + a_r = k \\
            b_1 + \cdots + b_r = \ell
          }}
		1
		\leq
        \sum\limits_{\substack{a_1, b_1, \ldots, a_r, b_r \geq 0 \\
            a_1 + b_1 + \cdots + a_r + b_r = k + \ell
          }}
		1
		=
		\binom{k + \ell + 2r - 1}{k + \ell}
		\leq
		2^{3(k + \ell) - 2n - 1}.
    \end{equation*}
    Using this estimate, we get
    \begin{align*}
        &p_R \big(
            C_n \big(
                \xi_1 \tensor \cdots \tensor \xi_k,
                \eta_1 \tensor \cdots \tensor \eta_{\ell}
            \big)
        \big) \\
        &\quad\leq
        q(\xi_1) \cdots q(\xi_k)
		q(\eta_1) \cdots q(\eta_{\ell})
        2^{k + \ell - n}
        \frac{k! \ell!}{(k + \ell - n)!^{1-R}}
        2^{3(k + \ell) - 2n - 1}
        \\
        &\quad=
        q_R \left(
            \xi_1 \tensor \cdots \tensor \xi_k
        \right)
        q_R \left(
            \eta_1 \tensor \cdots \tensor \eta_{\ell}
        \right)
        2^{4(k + \ell) - 3n - 1}
        \left(
        	\frac{k! \ell! n!}{(k + \ell - n)! n!}
        \right)^{1-R}
        \\
        &\quad\leq
        q_R \left(
            \xi_1 \tensor \cdots \tensor \xi_k
        \right)
        q_R \left(
            \eta_1 \tensor \cdots \tensor \eta_{\ell}
        \right)
        2^{4(k + \ell) - 3n - 1}
        2^{(1 - R)(k + \ell)}
        n!^{1 - R}
        \\
        &\quad=
        \frac{n!^{1 - R}}{2 \cdot 8^n}
        (32 q)_R \left(
            \xi_1 \tensor \cdots \tensor \xi_k
        \right)
        (32 q)_R \left(
            \eta_1 \tensor \cdots \tensor \eta_{\ell}
        \right).
    \end{align*}
    The estimate \eqref{LCAna:CnOperators} is now proven on factorizing
    tensors. For general tensors $x,y \in \Tensor_R^{\bullet}(\lie{g})$,
    we use the following argument: let
    \begin{equation*}
     	x
     	=
     	\sum\limits_{m=0}^k
     	x^{(m)}
     	=
     	\sum\limits_{m=0}^k
     	\sum_i
     	x_i^{(m)}
     	, \quad
     	y
     	=
     	\sum\limits_{n=0}^{\ell}
     	y^{(n)}
     	=
     	\sum\limits_{n=0}^{\ell}
     	\sum_j
     	y_j^{(n)},
	\end{equation*}
	where the $x_i^{(m)}$ and the $y_j^{(n)}$ are factorizing
        tensors of homogeneous degrees $m$ and $n$, respectively, with
        the maximal degree of $x$ and $y$ being $k$ and $\ell$,
        respectively. Hence we have
	\begin{equation*}
		x_i^{(m)}
		=
		x_i^{(m),1}
     	\tensor \cdots \tensor
     	x_i^{(m),m}
     	, \quad
		y_j^{(n)}
		=
		y_j^{(n),1}
     	\tensor \cdots \tensor
     	y_j^{(n),n}.
    \end{equation*}
	Now we get the following estimate:
    \begin{align*}
     	p_R \left( C_n (x, y) \right)
    	    & =
    	    p_R
    	    \left(
    		    C_n
    		    \left(
		     	\Bigg(
   			 		\sum\limits_{m=0}^k
	   		 		\sum_i
	    			    x_i^{(m)}
		    	    \Bigg)
	    		    ,
	    		    \Bigg(
		    		    \sum\limits_{n=0}^{\ell}
		    		    \sum_j
	    			    y_j^{(n)}
	    		    \Bigg)
	     	\right)
     	\right)
     	\\
     	& =
     	p_R
     	\left(
   		 	\sum\limits_{m=0}^k
   		 	\sum\limits_{n=0}^{\ell}
   		 	\sum_i
    		    \sum_j
    		    C_n
    		    \left(
    			    x_i^{(m)}
    			    ,
    			    y_j^{(n)}
    		    \right)
     	\right)
     	\\
     	& \leq
   		\sum\limits_{m=0}^k
   		\sum\limits_{n=0}^{\ell}
   		\sum_i
     	\sum_j
     	\frac{n!^{1 - R}}{2 \cdot 8^n}
     	(32 q)_R \left( x_i^{(m)} \right)
     	(32 q)_R \left( y_j^{(n)} \right)
     	\\
     	& =
     	\frac{n!^{1 - R}}{2 \cdot 8^n}
   		\left(
	   		\sum\limits_{m=0}^k
   			\sum_i
   			(32 q)_R \left( x_i^{(m)} \right)
   		\right)
   		\left(
	   		\sum\limits_{n=0}^{\ell}
   			\sum_j
    		(32 q)_R \left( y_j^{(n)} \right)
     	\right)
     	\\
     	& =
     	\frac{n!^{1 - R}}{2 \cdot 8^n}
   		\left(
	   		\sum\limits_{m=0}^k
	   		32^m m!^R
   			\sum_i
   			q \left( x_i^{(m), 1} \right)
   			\cdots
   			q \left( x_i^{(m), m} \right)
   		\right)
   	    \\
   	    & \qquad
   	    \cdot
   		\left(
	   		\sum\limits_{n=0}^{\ell}
   			32^n n!^R
   			\sum_j
    		q \left( y_j^{(n), 1} \right)
    		\cdots
    		q \left( y_j^{(n), n} \right)
     	\right).
    \end{align*}
    We have to take the infimum on both sides over all
    representations of the $x^{(m)}$ and the $y^{(n)}$. On the
    right hand side, we get for the $x$-terms
    \begin{equation*}
     	\inf \left\{ \left.
    			\sum_i
    			q \left( x_i^{(m), 1} \right)
    			\cdots
    			q \left( x_i^{(m), m} \right)
    		\right|
    			\sum_i
    			x_i^{(m), 1}
    			\tensor \cdots \tensor
    			x_i^{(m), m}
    			=
    			x^{(m)}
     	\right\}
     	=
     	q^m \left( x^{(m)} \right).
    \end{equation*}
    The $y$-terms give in the same way $q^n \left( y^{(n)} \right)$.
    This is exactly the definition of the tensor power of a seminorm
    as needed for the projective tensor product.  We can recollect the
    factorials and the coefficients and get
    \begin{align*}
     	p_R \left( C_n(x, y) \right)
     	& \leq
    	    \frac{n!^{1 - R}}{2 \cdot 8^n}
   		\left(
	   		\sum\limits_{m=0}^k
	   		32^k k!^R
   			\sum_i
   			q^m \left( x_i^{(m)} \right)
   		\right)
   		\left(
	   		\sum\limits_{n=0}^{\ell}
   			32^n n!^R
   			\sum_j
    		q^n \left( y_j^{(n)} \right)
        \right)
    	    \\
        & =
        \frac{n!^{1 - R}}{2 \cdot 8^n}
        \left(
            \sum\limits_{m=0}^k
   			(32 q)_R \left( x^{(m)} \right)
        \right)
        \left(
   			\sum\limits_{n=0}^{\ell}
   			(32 q)_R \left( y^{(n)} \right)
   		\right)
    		\\
     	& =
     	\frac{n!^{1 - R}}{2 \cdot 8^n}
     	(32 q)_R(x)
     	(32 q)_R(y),
    \end{align*}
    which proves \eqref{LCAna:CnOperators} on general tensors.
    For the second statement, let $x$ and $y$ be tensors of degree
    at most $k$ and $\ell$ respectively. We have
    \begin{align}
    \nonumber
    	p_R \left(
    	x \star_z y
    	\right)
    	& =
    	p_R \left(
            \sum\limits_{n=0}^{k + \ell - 1}
            z^n C_n(x, y)
    	\right)
    	\\
    \nonumber
    	& \leq
    	\sum\limits_{n=0}^{k + \ell - 1}
    	p_R \left(
    		z^n C_n(x, y)
    	\right)
    	\\
    \label{LCAna:2MonomialEstimate}
    	& \ot{(a)}{\leq}
    	\sum\limits_{n=0}^{k + \ell - 1}
    	\frac{|z|^n}{2 \cdot 8^n}
    	n!^{1 - R}
    	(32 q)_R(x)
    	(32 q)_R(y)
    	\\
    \nonumber
    	& \ot{(b)}{\leq}
    	\frac{(|z| + 1)^{k + \ell}}{2}
    	(32 q)_R(x)
    	(32 q)_R(y)
    	\sum\limits_{n = 0}^{\infty}
    	\frac{1}{8^n}
		\\
	\nonumber
		& \leq
    	(32(|z| + 1) q)_R(x)
    	(32(|z| + 1) q)_R(y),
    \end{align}
    by using \eqref{LCAna:CnOperators} in (a) and $R \geq 1$ in (b).
    Since estimates on $\Sym_R^{\bullet}(\lie{g})$ also hold for the
    completion, the second part is done and hence the first part of
    our Main Theorem \ref{theorem:MainTheoremI} is proven.
\end{proof}

It is easy to see that we need at least $R \geq 1$ to get rid of the
factorials which come up because of the combinatorics of the star
product, but it is interesting to know that the Gutt star product
really fails continuity, if $R < 1$:
\begin{example}
    \label{Ex:LCAna:HeisenbergAlgebra}%
    Let $0 \leq R < 1$ and $\lie{g}$ be the Heisenberg algebra in three
    dimensions, i.e. the Lie algebra generated by the elements
    $P$, $Q$ and $E$ with the bracket $[P,Q] = E$ and all other brackets
    vanish. We impose on $\lie{g}$ the $\ell^1$-topology with the norm $n$ and
    $n(P) = n(Q) = n(E) = 1$. Then we consider the sequences
    \begin{equation*}
        a_k
        =
        \frac{P^k}{k!^{R + \epsilon}}
        \quad
        \textrm{and}
        \quad
        b_k
        =
        \frac{Q^k}{k!^{R + \epsilon}}
    \end{equation*}
    with $2 \epsilon < 1 - R$. It is easy to see that
    \begin{equation*}
        n_R(a_k)
        =
        n_R(b_k)
        =
        k!^{- \epsilon}
    \end{equation*}
    and hence we get the limit for any $c > 0$ by
    \begin{equation*}
     	\lim_{k \longrightarrow \infty}
     	(cn)_R(a_k)
     	=
     	\lim_{k \longrightarrow \infty}
     	(cn)_R(a_k)
     	=
     	0
    \end{equation*}
    We want to show that there is no $c > 0$ such that
    \begin{equation*}
        n_R(a_k \star_z b_k)
        \leq
        (c n)_R(a_k) (c n)_R(b_k).
    \end{equation*}
    In other words, $n_R(a_k \star_z b_k)$ grows faster than
    exponentially. But this is the case, since we can calculate the star
    product explicitly and see
    \begin{align*}
        n_R(a_k \star_z b_k)
        & =
        n_R \left(
        \sum\limits_{j=0}^k
        \binom{k}{j}
        \binom{k}{j}
        j! \frac{1}{k!^{2R + 2 \epsilon}}
        P^{k-j} Q^{k-j} E^j
        \right)
        \\
        & =
        \sum\limits_{j=0}^k
        \frac{k!^2 j! (2k - j)!^R}{(k-j)!^2 j!^2 k!^{2R + 2 \epsilon}}
        \underbrace{
        n^{2k-j}
        ( P^{k-j} Q^{k-j} E^j )
        }_{= 1}
        \\
        & =
        \sum\limits_{j=0}^k
        \underbrace{
        \binom{k}{j}^2 \binom{2k}{k} \binom{2k}{j}^{-1}
        }_{\geq 1}
        \frac{j!^{1-R}}{k!^{2 \epsilon}}
        \\
        & \geq
        \sum\limits_{j=0}^k
        \frac{j!^{1-R}}{k!^{2 \epsilon}}
        \\
        & \geq
        k!^{1 - R - 2 \epsilon}.
    \end{align*}
    Hence for every continuous seminorm $p_R$ in the $\Tensor_R$-topology, we
    have
    \begin{equation*}
     	p_R\left( a_k \star_z b_k \right)
     	\longrightarrow
     	\infty
     	\quad \text{ and } \quad
     	n_R(a_k), n_R(b_k)
     	\longrightarrow
     	0,
    \end{equation*}
    so the star product is not continuous.
\end{example}

%
%

\subsection{An Inductive Continuity Result}

As already mentioned, we also get continuity via Proposition
\ref{proposition:GuttStarOneLinearFactor} by imposing the
submultiplicativity of the seminorms:
\begin{equation}
    p([\xi, \eta])
    \leq
    p(\xi)p(\eta).
\end{equation}
This is fulfilled for a big class of Lie algebras but, for example,
no longer for general nilpotent ones. In any case, it gives an alternative
proof of the most important part of our Main Theorem and is therefore
given here.
\begin{lemma}
    \label{Lemma:LCAna:PreContinuity2}%
    Let $\lie{g}$ be a locally multiplicatively convex Lie algebra and
    $R \geq1$.  Then if $|z| < 2 \pi$ or $R > 1$ there exists, for $x
    \in \Tensor^{\bullet}(\lie{g}), \eta \in \lie{g}$ of degree at
    most $k$ and each continuous submultiplicative seminorm $p$, a
    constant $c_{z,R}$ only depending on $z$ and $R$ such that the
    following estimate holds:
    \begin{equation}
        \label{LCAna:PreContinuity2}
        p_R(x \ostar_z \eta)
        \leq
        c_{z,R} (k+1)^R p_R(x) q(\eta)
    \end{equation}
\end{lemma}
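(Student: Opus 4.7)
The plan is to apply the explicit polarized formula~\eqref{Formulas:LinearMonomial2} term-by-term, exploiting that it splits $\Symmetrizer(x) \ostar_z \eta$ into pieces of known homogeneous degree. As in the proof of Proposition~\ref{Prop:LCAna:Continuity1}, it is enough to establish the estimate when $x = \xi_1 \tensor \cdots \tensor \xi_k$ is a factorizing tensor of pure degree $k$; the general case follows by decomposing $x$ into its homogeneous components and using the infimum characterization of the projective tensor seminorms. Note that in the lmc situation the submultiplicative seminorm $p$ is its own asymptotic estimate, so one may take $q = p$ throughout.

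For such an $x$, Equation~\eqref{Formulas:LinearMonomial2} writes $\Symmetrizer(x) \ostar_z \eta = \sum_{j=0}^{k} T_j$, where each $T_j$ lies in $\Sym^{k+1-j}(\lie{g})$ and is a linear combination, with coefficient $\tfrac{1}{k!}\binom{k}{j} z^j B_j^*$, of the $k!$ symmetric products built from $\sigma \in S_k$. Each such product consists of $k-j$ letters $\xi_{\sigma(i)}$ and one $j$-fold iterated Lie bracket ending in $\eta$. Iterating the lmc estimate $p([\,\cdot\,,\,\cdot\,]) \le p(\cdot)p(\cdot)$ bounds the bracket by $p(\xi_{\sigma(k-j+1)}) \cdots p(\xi_{\sigma(k)}) p(\eta)$, and the continuity of the symmetric product then gives
\begin{equation*}
    p^{k+1-j}(T_j)
    \le
    \binom{k}{j} |z|^j |B_j^*| \, p(\xi_1) \cdots p(\xi_k) \, p(\eta).
\end{equation*}
Multiplying by $(k+1-j)!^R$, summing over $j$, and using $p_R(\Symmetrizer(x)) \le p_R(x) = k!^R p(\xi_1)\cdots p(\xi_k)$, the desired inequality~\eqref{LCAna:PreContinuity2} reduces to the purely numerical claim
\begin{equation*}
    \sum_{j=0}^{k} \binom{k}{j} |z|^j |B_j^*| (k+1-j)!^R
    \le
    c_{z,R} \, (k+1)^R \, k!^R.
\end{equation*}

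Rewriting the left-hand side by means of the identity $\binom{k}{j}(k+1-j)!^R = \tfrac{k!}{j!}(k-j+1)^R (k-j)!^{R-1}$ and dividing by $k!^R(k+1)^R$, the claim becomes
\begin{equation*}
    \sum_{j=0}^{k}
    \frac{|B_j^*|}{j!} \, |z|^j
    \left(\frac{k-j+1}{k+1}\right)^{R}
    \left(\frac{(k-j)!}{k!}\right)^{R-1}
    \le
    c_{z,R}.
\end{equation*}
Since $R \ge 1$, the last two factors are bounded by $1$. If $|z| < 2\pi$, then $\sum_{j} \tfrac{|B_j^*|}{j!}|z|^j$ is finite by the radius of convergence of~\eqref{Prelim:BernoulliNumbers}, and one may take $c_{z,R}$ to equal this sum. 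If instead $R > 1$, one uses $k!/(k-j)! \ge j!$ to refine $\left(\tfrac{(k-j)!}{k!}\right)^{R-1} \le j!^{1-R}$, together with the rough bound $|B_j^*| \le j!$, yielding the dominating series $\sum_{j} |z|^j/j!^{R-1}$, which converges for every $z \in \mathbb{K}$.

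The main technical obstacle is the combinatorial bookkeeping needed to separate the $(k+1)^R$ factor from $p_R(x) = k!^R p^k(x)$: the factorials $(k+1-j)!^R$ that arise naturally from the degree-$(k+1-j)$ piece of $\Symmetrizer(x) \ostar_z \eta$ must be reorganized so that the $k$-dependence collapses to $k!^R(k+1)^R$, while the residual dependence on $j$ is absorbed either by the Bernoulli generating function (when $|z| < 2\pi$) or by the extra factorial decay (when $R > 1$). This dichotomy is precisely the reason the hypothesis of the lemma splits into the two cases.
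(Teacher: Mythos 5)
Your proposal is correct and follows essentially the same route as the paper: both reduce to factorizing tensors via the infimum argument, apply the polarized formula~\eqref{Formulas:LinearMonomial2} with $q=p$ by submultiplicativity, bound $(k+1-j)!^R \le (k+1)^R(k-j)!^R$, and absorb the remaining $j$-dependence into the convergent series $\sum_j |B_j^*||z|^j/j!$ when $|z|<2\pi$ or $\sum_j |z|^j/j!^{R-1}$ (via $|B_j^*|\le j!$ and $\binom{k}{j}\ge 1$) when $R>1$. The only cosmetic difference is that you isolate the purely numerical inequality before estimating, whereas the paper carries the seminorms through the chain; the dichotomy of cases and the constants obtained are the same.
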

\begin{proof}
    We have for $\xi_1, \ldots, \xi_k, \eta \in \lie{g}$
    \begin{align*}
        p_R \left(
            \xi_1 \tensor
        \right.
        &
        \left.
             \cdots \tensor \xi_k \ostar_z \eta
        \right)
        =
        p_R \left(
            \sum\limits_{n=0}^k
            \frac{B_n^* z^n}{n! (k-n)!}
            \sum\limits_{\sigma \in S_k}
            \xi_{\sigma(1)} \cdots \xi_{\sigma(k - n)}
            \left(
                \ad_{\xi_{\sigma(k - n + 1)}}
                \circ \cdots \circ
                \ad_{\xi_{\sigma(k)}}
            \right)
            (\eta)
        \right)
        \\
        & =
        \sum\limits_{n=0}^k
        \frac{|B_n^*| |z|^n}{n! (k - n)!}
        \sum\limits_{\sigma \in S_k}
        (k + 1 - n)!^R
        p^{k + 1 - n}
        \left(
            \xi_{\sigma(1)} \cdots \xi_{\sigma(k - n)}
            \left(
                \ad_{\xi_{\sigma(k - n + 1)}}
                \circ \cdots \circ
                \ad_{\xi_{\sigma(k)}}
            \right)
            (\eta)
        \right)
        \\
        & \leq
        (k + 1)^R
        \sum\limits_{n=0}^k
        \frac{|B_n^*| |z|^n}{n!}
        (k - n)!^{R - 1}
        k! p(\xi_1) \cdots p(\xi_k) p(\eta)
        \\
        & =
        (k + 1)^R
        \sum\limits_{n=0}^k
        \frac{|B_n^*| |z|^n}{n!^R}
        \left( \frac{(k - n)! n!}{k!} \right)^{R - 1}
        p_R \left(
         	\xi_1 \tensor \cdots \tensor \xi_k
        \right)
        p(\eta)
        \\
        & \leq
        (k + 1)^R
		p_R \left(
         	\xi_1 \tensor \cdots \tensor \xi_k
        \right)
        p(\eta)
        \sum\limits_{n=0}^k
        \frac{|B_n^*| |z|^n}{n!^R}.
    \end{align*}
    Now if $|z| < 2 \pi$ the sum can be estimated by extending it to a
    series which converges. So we get a constant $c_{z, R}$ depending
    on $R$ and on $z$ such that
    \begin{equation*}
        p_R \left(
            \xi_1
            \tensor \cdots \tensor \xi_k
            \ostar_z
            \eta
        \right)
        \leq
        (k + 1)^R c_{z, R}
		p_R \left(
         	\xi_1 \tensor \cdots \tensor \xi_k
        \right)
        p(\eta).
    \end{equation*}
    On the other hand, if $|z| \geq 2 \pi$ and $R > 1$ we can estimate
    \begin{equation*}
        p_R \left(
            \xi_1
            \tensor \cdots \tensor \xi_k
            \ostar_z
            \eta
        \right)
        \leq
        (k + 1)^R
		p_R \left(
         	\xi_1 \tensor \cdots \tensor \xi_k
        \right)
        p(\eta)
        \left(
            \sum\limits_{n=0}^k
            \frac{|B_n^*|}{n!}
        \right)
        \left(
            \sum\limits_{n=0}^k
            \frac{|z|^n}{n!^{R - 1}}
        \right).
    \end{equation*}
    Again, both series will converge and give constants depending only
    on $z$ and $R$. Hence, we have the estimate on factorizing tensors
    and can extend this to generic tensors of degree at most $k$ by taking
    the infimum as in the proof of Proposition~\ref{Prop:LCAna:Continuity1}.
\end{proof}

In the following, we assume again that either $R > 1$ or $R \geq 1$
and $|z| < 2\pi$ in order the use
Lemma~\ref{Lemma:LCAna:PreContinuity2}.  Now we can give a simpler
proof of Proposition~\ref{Prop:LCAna:Continuity1} for the case of a
locally multiplicatively convex Lie algebra:
\begin{proof}[Alternative Proof of Proposition~\ref{Prop:LCAna:Continuity1}]
    Assume that $\lie{g}$ is now even locally multiplicatively convex.
    We want to replace $\eta$ in the foregoing lemma by an arbitrary
    tensor $y$ of degree at most $\ell$. Let $\eta_1, \ldots,
    \eta_\ell \in \lie{g}$.  On factorizing tensors we get
    \begin{align*}
        p_R \left(
            \xi_1 \tensor \cdots \tensor \xi_k
            \ostar
            \right.
        &
            \left.
            \eta_1 \tensor \cdots \tensor \eta_{\ell}
        \right)
        =
        p_R \left(
         	\frac{1}{\ell!}
         	\sum\limits_{\tau \in S_{\ell}}
            \xi_1 \tensor \cdots \tensor \xi_k
            \ostar
            \eta_{\tau(1)}
            \ostar \cdots \ostar
            \eta_{\tau(\ell)}
        \right)
        \\
        & \leq
        c_{z,R} (k + \ell)^R
        \frac{1}{\ell!}
        \sum\limits_{\tau \in S_{\ell}}
        p_R \left(
            \xi_1 \tensor \cdots \tensor \xi_k
            \ostar
            \eta_{\tau(1)}
            \ostar \cdots \ostar
            \eta_{\tau(\ell - 1)}
        \right)
        p \left( \eta_{\tau(\ell)} \right)
        \\
        & \leq
        c_{z,R}^{\ell} ((k + \ell) \cdots (k + 1))^R
        p_R \left(
            \xi_1 \tensor \cdots \tensor \xi_k
        \right)
        p(\eta_1) \cdots p(\eta_{\ell})
        \\
        & =
        c_{z,R}^{\ell} \left(
        \frac{(k + \ell)!}{k! \ell!}
        \right)^R
        p_R \left(
            \xi_1 \tensor \cdots \tensor \xi_k
        \right)
        p_R \left(
            \eta_1 \tensor \cdots \tensor \eta_{\ell}
        \right)
        \\
        & \leq
        (2^R p)_R \left(
            \xi_1 \tensor \cdots \tensor \xi_k
        \right)
        (2^R c_{z,R} p)_R \left(
            \eta_1 \tensor \cdots \tensor \eta_{\ell}
        \right).
    \end{align*}
    Once again, we have the estimate on factorizing tensors via
    polarization and extend it via the infimum argument on the whole
    tensor algebra, since the estimate depends no longer on the degree
    of the tensors.
\end{proof}

While the above proof is of course much simpler, we had to invest a
slightly stronger assumption compared to the AE Lie algebra case.

%
%

\subsection{Dependence on the Formal Parameter}
\label{sec:Holomorphic}

Let us consider the completion $\widehat{\Sym}_R^\bullet(\lie{g})$ of
the symmetric algebra with the Gutt star product $\star_z$.
Exponentials do not belong to this completion, as we can show.
\begin{proposition}
    \label{proposition:NoExponentialsSorry}%
    Let $\xi \in \lie{g}$ and $R \geq 1$, then $\exp(\xi) \not\in
    \widehat{\Sym}_R^\bullet(\lie{g})$, where $\exp(\xi) =
    \sum_{n=0}^{\infty} \frac{\xi^n}{n!}$.
\end{proposition}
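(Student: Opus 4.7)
The plan is to use the explicit description of the completion from Proposition~\ref{proposition:Projections}~\refitem{item:CompletionExplicitly}: an element $v = \sum_n v_n$ belongs to $\widehat{\Sym}_R^\bullet(\lie{g})$ if and only if $p_R(v) = \sum_{n=0}^\infty n!^R\, p^n(v_n) < \infty$ for every continuous seminorm $p$ on $\lie{g}$. So it suffices to exhibit one continuous seminorm $p$ for which $p_R(\exp(\xi)) = \infty$; of course we may tacitly assume $\xi \ne 0$ since otherwise $\exp(\xi)$ reduces to the unit and lies trivially in the completion.

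Next I would compute $p^n$ applied to the $n$-th homogeneous component $\frac{\xi^n}{n!}$ of $\exp(\xi)$. Since the symmetric $n$-th power of a single element coincides with the pure tensor power $\xi^{\otimes n}$ (the symmetrizer acts as the identity on it), and since the $\pi$-tensor seminorm of a pure tensor is the product of the seminorms of the factors, we obtain
\[
p^n\!\left(\tfrac{\xi^n}{n!}\right) \;=\; \tfrac{p(\xi)^n}{n!},
\]
and therefore
\[
p_R(\exp(\xi))
\;=\; \sum_{n=0}^\infty n!^{R}\,\tfrac{p(\xi)^n}{n!}
\;=\; \sum_{n=0}^\infty n!^{R-1}\, p(\xi)^n.
\]

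It remains to choose $p$ so that this last series diverges. By the Hausdorff property of $\lie{g}$ there exists some continuous seminorm $p_0$ with $p_0(\xi) > 0$. If $R > 1$, the factor $n!^{R-1}$ grows faster than any geometric series, so the individual terms $n!^{R-1} p_0(\xi)^n$ already tend to $\infty$ and the sum diverges for $p = p_0$. If $R = 1$, the expression collapses to $\sum_n p(\xi)^n$; here I would simply rescale, setting $p = c\, p_0$ with $c \ge 1/p_0(\xi)$, which is still a continuous seminorm and satisfies $p(\xi) \ge 1$, so that the geometric series diverges. In either case $p_R(\exp(\xi)) = \infty$, which by the completion criterion proves $\exp(\xi) \notin \widehat{\Sym}_R^\bullet(\lie{g})$.

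There is no real obstacle in this proof: the whole argument is a direct computation with the defining seminorms, and the only small subtlety is the need to rescale the seminorm in the boundary case $R = 1$, relying on the fact that a positive multiple of a continuous seminorm remains continuous.
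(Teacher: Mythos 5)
Your proof is correct and follows essentially the same route as the paper: both arguments reduce to the computation $p_R(\exp(\xi)) = \sum_n n!^{R-1} p(\xi)^n$ and then rescale the seminorm by $c = p(\xi)^{-1}$ (the paper does this uniformly for all $R \geq 1$, obtaining $\sum_n n!^{R-1} \geq N$ on partial sums, rather than splitting into the cases $R>1$ and $R=1$ as you do). The only difference is cosmetic, and your explicit remark that $\xi \neq 0$ must be assumed is a point the paper leaves implicit.
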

\begin{proof}
    Take $p$ a seminorm such that $p(\xi) \neq 0$. Then set $c =
    p(\xi)^{-1}$. For $\xi^n$ the powers in the sense of either the
    usual tensor product, or the symmetric product or the star product
    are the same. So we have for $N \in \mathbb{N}$
    \begin{equation*}
        (cp)_R \left(
        \sum\limits_{n=0}^N
        \frac{c^n}{n!} \xi^n
        \right)
        =
        \sum\limits_{n=0}^N
        \frac{n!^R}{n!}
        c^n
        p_R \left( \xi^n
        \right)
        =
        \sum\limits_{n=0}^N
        n!^{R - 1}
        \geq
        N.
    \end{equation*}
    Hence $(cp)_R (\exp(\xi))$ does not converge.
\end{proof}

Since the formal series converges to the star product on
$\widehat{\Sym}_R^\bullet(\lie{g})$ and all the projections on the
homogeneous components are continuous from Lemma
\ref{proposition:Projections}, we can reinterpret the continuity
result we found in Proposition~\ref{Prop:LCAna:Continuity1}.
\begin{proposition}
    \label{corollary:HolomorphicDependence}%
    Let $R \geq 1$, then for all $x, y \in
    \widehat{\Sym}_R^\bullet(\lie{g})$ the map
    \begin{equation}
        \label{LCAna:Holomorphicity}
        \mathbb{K} \ni z
        \longmapsto
        x \star_z y \in
        \widehat{\Sym}_R^\bullet(\lie{g})
    \end{equation}
    is analytic with (absolutely convergent) Taylor expansion at $z = 0$
    given by Equation~\eqref{Formulas:2MonomialsFormula1}. The collection
    of algebras
    $\left\{ \left( \widehat{\Sym}_R^\bullet(\lie{g}), \star_z \right)
    \right\}_{z \in \mathbb{K}}$ is an entire deformation of the completed
    symmetric tensor algebra $\widehat{\Sym}_R^\bullet(\lie{g})$.
\end{proposition}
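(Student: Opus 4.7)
My plan is to realize $f(z) := x \star_z y$ as a uniform limit on compact subsets of $\mathbb{K}$ of polynomial-in-$z$ approximants, which immediately gives that $f$ is entire, and then to identify the Taylor coefficients.

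For $x, y \in \widehat{\Sym}_R^\bullet(\lie{g})$ with homogeneous decompositions $x = \sum_k x_k$ and $y = \sum_\ell y_\ell$, I set $x^{(N)} = \sum_{k \leq N} x_k$ and $y^{(N)} = \sum_{\ell \leq N} y_\ell$. Since $C_n$ lowers total degree by $n$, the approximant $x^{(N)} \star_z y^{(N)} = \sum_{n = 0}^{2N-1} z^n C_n(x^{(N)}, y^{(N)})$ is a polynomial in $z$ of degree at most $2N - 1$ with coefficients in $\Sym^\bullet(\lie{g}) \subset \widehat{\Sym}_R^\bullet(\lie{g})$, hence trivially entire. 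The main work is to show $x^{(N)} \star_z y^{(N)} \to x \star_z y$ uniformly on every compact $K \subset \mathbb{K}$. Fixing such a $K$, letting $M = \max_{z \in K} |z|$ and $c_M = 32(M + 1)$, bilinearity together with the continuity estimate from Proposition~\ref{Prop:LCAna:Continuity1}~\refitem{item:LCAna:Continuity1}, applied uniformly in $z \in K$ via the $z$-independent constant $c_M \geq 32(|z| + 1)$, gives for every continuous seminorm $p$ on $\lie{g}$ with asymptotic estimate $q$
\begin{equation*}
    p_R\bigl(x \star_z y - x^{(N)} \star_z y^{(N)}\bigr)
    \leq
    (c_M q)_R(x - x^{(N)})\,(c_M q)_R(y)
    +
    (c_M q)_R(x^{(N)})\,(c_M q)_R(y - y^{(N)}).
\end{equation*}
Since $(c_M q)_R$ is a continuous seminorm on $\widehat{\Sym}_R^\bullet(\lie{g})$ and $x - x^{(N)}, y - y^{(N)}$ are tails of absolutely convergent series in this seminorm, the two difference factors tend to zero as $N \to \infty$ while $(c_M q)_R(x^{(N)}) \leq (c_M q)_R(x)$ stays bounded; this yields the uniform convergence on $K$.

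Completeness of $\widehat{\Sym}_R^\bullet(\lie{g})$ together with the standard Weierstrass-type theorem for locally convex space valued analytic functions (uniform limits on compacta of entire functions are entire, with Taylor coefficients passing to the limit) then implies that $f$ is entire. The $z^n$-coefficient of the $N$-th approximant is $C_n(x^{(N)}, y^{(N)})$, and this converges to $C_n(x, y)$ by joint continuity of $C_n$ (Proposition~\ref{Prop:LCAna:Continuity1}~\refitem{item:CnOperatorEstimate}). Hence the Taylor expansion of $f$ at $z = 0$ is $\sum_{n \geq 0} z^n C_n(x, y)$, absolutely convergent for every $z \in \mathbb{K}$, and the entire-deformation assertion is a direct reformulation of what has been proved. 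The main obstacle is turning the $z$-dependent continuity estimate of Proposition~\ref{Prop:LCAna:Continuity1}, whose constant $32(|z| + 1)$ depends on $z$, into uniform convergence on compacta; this succeeds precisely because the dependence on $|z|$ is merely polynomial and can therefore be absorbed, on each compact, into a single continuous seminorm.
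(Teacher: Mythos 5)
Your argument takes a genuinely different route from the paper. The paper works term by term: it bounds $p_R(z^n C_n(x,y))$ directly and shows that $\sum_n |z|^n\, p_R(C_n(x,y))$ converges for every $z$; for $R=1$ the naive bound from Proposition~\ref{Prop:LCAna:Continuity1}, \refitem{item:CnOperatorEstimate}, only gives radius of convergence $8$, so the authors go back to homogeneous factorizing tensors and absorb a factor $M^n \le M^{k+\ell}$ into the seminorm, obtaining convergence for $|z|<8M$ with $M\ge 1$ arbitrary. You instead truncate $x$ and $y$ by degree, observe that the truncated products are polynomials in $z$, and pass to the limit uniformly on compacta using only the product estimate of part \refitem{item:LCAna:Continuity1}; this is economical, because the absorption of $(|z|+1)^n$ into the seminorms has already been carried out there once and for all. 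Your uniform-convergence step is correct: the bilinear splitting of $x\star_z y - x^{(N)}\star_z y^{(N)}$, the monotonicity $(cq)_R\le (c_Mq)_R$ for $c\le c_M$, the bound $(c_Mq)_R(x^{(N)})\le (c_Mq)_R(x)$, and the vanishing of the tails $(c_Mq)_R(x-x^{(N)})$ are all valid, as is the identification $C_n(x^{(N)},y^{(N)})\to C_n(x,y)$ via the joint continuity of $C_n$.

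The one place where your argument is genuinely incomplete is the appeal to the Weierstrass-type theorem, which is a statement of \emph{complex} analysis. The paper allows $\mathbb{K}=\mathbb{R}$, and on $\mathbb{R}$ a uniform limit of polynomials on compacta is merely continuous (Weierstrass approximation), so neither analyticity of the limit nor the passage of Taylor coefficients to the limit follows as stated; moreover the \emph{absolute} convergence of $\sum_n z^nC_n(x,y)$, which is part of the claim, is recovered in the complex case only through the Cauchy estimates, which you leave implicit. To close this you should either complexify --- the polynomial approximants and the estimate of Proposition~\ref{Prop:LCAna:Continuity1} extend verbatim to complex $z$ since only $|z|$ enters, so the Weierstrass and Cauchy arguments apply in the complexification and restrict back to real $z$ --- or reproduce the paper's refined coefficient bound, namely $p_R(C_n(x,y))\le \frac{1}{2(8M)^n}\,(32Mq)_R(x)\,(32Mq)_R(y)$ for arbitrary $M\ge 1$ obtained by absorbing $M^n\le M^{k+\ell}$ into the seminorms on homogeneous components, which gives absolute convergence for all $z$ without any complex analysis.
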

\begin{proof}
	The crucial point is that for $x, y \in \widehat{\Sym}_R^\bullet
	(\lie{g})$ and every continuous seminorm $p$ we have an asymptotic
	estimate $q$ such that
	\begin{align*}
		p_R \left( x \star_z y \right)
		& =
		p_R
		\left(
			\sum\limits_{n=0}^{\infty}
			z^n C_n(x,y)
		\right)
		\\
		& =
		\sum\limits_{n=0}^{\infty}
		|z|^n
		p_R( C_n(x, y) )
		\\
		& \leq
		(16 q)_R (x)
        (16 q)_R (y)
		\sum\limits_{n=0}^{\infty}
        \frac{|z|^n n!^{1 - R}}{2 \cdot 8^n},
	\end{align*}
	where we used the fact that the estimate \eqref{LCAna:CnOperators}
	extends to the completion. For $R > 1$, this map is clearly analytic
	and absolutely convergent for all $z \in \mathbb{K}$.
	If $R = 1$, then for every $M \geq 1$ we go back to homogeneous,
	factorizing tensors $x^{(k)}$ and $y^{(\ell)}$ of degree $k$ and $\ell$
	respectively, and have
	\begin{align*}
		M^{n} p_R \left(
			C_n \left( x^{(k)}, y^{(\ell)} \right)
		\right)
		& \leq
		\frac{M^n}{2 \cdot 8^n}
		(16 q)_R \left( x^{(k)} \right)
		(16 q)_R \left( y^{(\ell)} \right)
		\\
		& \leq
		M^{k + \ell}
		(16 q)_R \left( x^{(k)} \right)
		(16 q)_R \left( y^{(\ell)} \right)
		\\
		& =
		(16M q)_R \left( x^{(k)} \right)
		(16M q)_R \left( y^{(\ell)} \right),
	\end{align*}
	where we used that $0 \leq n \leq k + \ell - 1$. The infimum argument
	gives the estimate on all tensors $x,y \in \Tensor_R^{\bullet}(\lie{g})$
	and it extends to the completion such that
	\begin{equation*}
		p_R \left( z^n C_n(x, y) \right)
		\leq
		(16M q)_R(x) (16M q)_R(y)
		\frac{|z|^n}{2 \cdot (8M)^n}
	\end{equation*}
	and hence
	\begin{equation*}
		p_R(x \star_z y)
		\leq
		(16 M q)_R (x)
        (16 M q)_R (y)
		\sum\limits_{n=0}^{\infty}
        \frac{|z|^n}{2 \cdot (8M)^n}.
	\end{equation*}
	So the power series converges for all $z \in \mathbb{K}$ with
	$|z| < 8M$ and converges uniformly if $|z| \leq c M$ for $c < 8$.
	But then, the right hand side of \eqref{LCAna:Holomorphicity}
	converges on all open discs centred around $z = 0$, and it must
	therefore be entire.
\end{proof}

%
%

\subsection{Functoriality and Representations}
\label{subsec:FunctorialityRepresentations}

Let $z \in \mathbb{K}$, $\algebra{A}$ an associative, locally convex
algebra and $\phi_z \colon \lie{g}_z \longrightarrow \algebra{A}$ a
continuous Lie algebra homomorphism with respect to the $z$-scaled
Lie bracket. Then we have the commuting diagram
\begin{equation}
    \label{eq:WhatACoolDiagram}
    \begin{tikzpicture}[baseline = (current bounding box.center)]
        \matrix (m)[
        matrix of math nodes,
        row sep=6em,
        column sep=7em,
        ]
        {
          \algebra{U}_R(\lie{g}_z)
          & \Sym_R^\bullet(\lie{g}) \\
          \lie{g}
          & \algebra{A} \\
        };
        \draw
        [-stealth]
        (m-1-1) edge node [above] {$\mathfrak{q}_z^{-1}$}
        (m-1-2) edge node [left] {$\Phi_z$}
        (m-2-2)
        (m-1-2) edge node [right] {$\widetilde{\Phi}_z$}
        (m-2-2)
        (m-2-1) edge node [left] {$\iota_z$}
        (m-1-1)
        (m-2-1) edge node [below] {$\phi_z$}
        (m-2-2);
    \end{tikzpicture}
\end{equation}
from the algebraic theory. A crucial question is now whether the
algebra homomorphisms $\Phi_z$ and $\widetilde{\Phi}_z$ are continuous. This
question is partly answered by the following result:
\begin{proposition}
    \label{Prop:LCAna:Semi-functoriality}%
    Let $\lie{g}$ be an AE-Lie algebra, let $\algebra{A}$ be an
    associative AE-algebra, and let $\phi_z \colon \lie{g}
    \longrightarrow \algebra{A}$ be a continuous Lie algebra
    homomorphism with respect to the $z$-scaled Lie bracket.  If $R
    \geq 0$, then the induced algebra homomorphisms $\Phi_z$ and
    $\widetilde{\Phi}_z$ are continuous.
\end{proposition}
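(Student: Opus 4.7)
The plan is to prove continuity of $\widetilde{\Phi}_z$ directly via a single AE-estimate in the target algebra, and then to obtain continuity of $\Phi_z$ for free from the fact that $\mathfrak{q}_z$ is, by construction, a topological isomorphism between $\Sym_R^\bullet(\lie{g})$ and $\algebra{U}_R(\lie{g}_z)$.

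First, I would make $\widetilde{\Phi}_z = \Phi_z \circ \mathfrak{q}_z$ explicit on a symmetric monomial. Since $\mathfrak{q}_z$ is the total symmetrization of Equation~\eqref{eq:QuantizationMap} and $\Phi_z$ is the unique algebra extension of $\phi_z$ from $\algebra{U}(\lie{g}_z)$ to $\algebra{A}$, one obtains
\[
\widetilde{\Phi}_z(\xi_1 \cdots \xi_n) = \frac{1}{n!}\sum_{\sigma \in S_n} \phi_z(\xi_{\sigma(1)}) \cdots \phi_z(\xi_{\sigma(n)}),
\]
the product on the right being the associative product of $\algebra{A}$. Following the scheme set up in Section~\ref{subsec:DirectContinuity}, I would extend $\widetilde{\Phi}_z$ to $\Tensor_R^\bullet(\lie{g})$ by precomposing with the symmetrization map $\Symmetrizer$.

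For the actual estimate, fix a continuous seminorm $p$ on $\algebra{A}$ and an asymptotic estimate $q$ for $p$, which exists since $\algebra{A}$ is AE. By continuity of $\phi_z$ choose a continuous seminorm $q'$ on $\lie{g}$ with $q \circ \phi_z \leq q'$. On a factorizing tensor $\xi_1 \otimes \cdots \otimes \xi_n$, the triangle inequality, the AE-estimate in $\algebra{A}$, and the permutation-invariance of the resulting bound combine to give
\[
p\bigl(\widetilde{\Phi}_z(\xi_1 \otimes \cdots \otimes \xi_n)\bigr) \leq q'(\xi_1) \cdots q'(\xi_n) \leq n!^R\, q'(\xi_1) \cdots q'(\xi_n)
\]
for every $R \geq 0$. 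Summing over homogeneous degrees and passing to arbitrary tensors via the infimum argument written out explicitly at the end of the proof of Proposition~\ref{Prop:LCAna:Continuity1}, one arrives at $p(\widetilde{\Phi}_z(x)) \leq q'_R(x)$, which proves continuity of $\widetilde{\Phi}_z$. Continuity of $\Phi_z$ then drops out of $\Phi_z = \widetilde{\Phi}_z \circ \mathfrak{q}_z^{-1}$.

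Strictly speaking there is no serious obstacle in this proof: the Lie-algebra-homomorphism hypothesis on $\phi_z$ is only required to ensure that $\Phi_z$ and $\widetilde{\Phi}_z$ are well-defined as algebra homomorphisms, whereas the continuity argument uses nothing more than the continuity of $\phi_z$ together with the AE-property of $\algebra{A}$. The sole point of caution is that the AE-estimate is invoked for associative products $\phi_z(\xi_{\sigma(1)}) \cdots \phi_z(\xi_{\sigma(n)})$, which is of course covered by Definition~\ref{def:AE} since the bracketing of an associative product is irrelevant.
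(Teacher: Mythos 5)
Your proposal is correct and follows essentially the same route as the paper: extend $\widetilde{\Phi}_z$ to the tensor algebra by precomposing with $\Symmetrizer$, bound the image of a factorizing tensor by $q'(\xi_1)\cdots q'(\xi_n)\le n!^R\,q'(\xi_1)\cdots q'(\xi_n)$ using the AE-property of $\algebra{A}$ and the continuity of $\phi_z$, and conclude by the infimum argument; continuity of $\Phi_z$ is then immediate since the topology on $\algebra{U}_R(\lie{g}_z)$ is by definition the pull-back along $\mathfrak{q}_z$. If anything, your explicit symmetrized expression $\frac{1}{n!}\sum_{\sigma}\phi_z(\xi_{\sigma(1)})\cdots\phi_z(\xi_{\sigma(n)})$ is slightly more careful than the paper's shorthand, which identifies $\Symmetrizer(\xi_1\tensor\cdots\tensor\xi_n)$ with $\xi_1\star_z\cdots\star_z\xi_n$ before applying the homomorphism property, though both yield the same bound.
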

\begin{proof}
    We define an extension of $\widetilde{\Phi}_z$ on the whole tensor
    algebra by
    \begin{equation*}
        \Psi_z \colon
        \Tensor_R^\bullet(\lie{g})
        \longrightarrow
        \algebra{A},
        \quad
        \Psi
        =
        \widetilde{\Phi}_z \circ \Symmetrizer.
    \end{equation*}
    It is clear that if $\Psi_z$ is continuous on factorizing tensors,
    we get the continuity of $\widetilde{\Phi}_z$ and $\Phi_z$ via the
    infimum argument. So let $p$ be a continuous seminorm on
    $\algebra{A}$ with its asymptotic estimate $q$ and $\xi_1, \ldots,
    \xi_n \in \lie{g}$. Since $\phi_z$ is continuous, we find a
    continuous seminorm $r$ on $\lie{g}$ such that for all $\xi \in
    \lie{g}$ we have $q(\phi_z(\xi)) \leq r(\xi)$. Then we have
    \begin{align*}
        p \left(
           \Psi_z
           \left(
               \xi_1 \tensor \cdots \tensor \xi_n
            \right)
        \right)
        & =
        p \left(
            \widetilde{\Phi}_z
            \left(
                \xi_1 \star_z \cdots \star_z \xi_n
            \right)
        \right)
        \\
        & =
        p( \phi_z(\xi_1) \cdots \phi_z(\xi_n) )
        \\
        & \leq
        q( \phi_z(\xi_1) )
        \cdots
        q( \phi_z(\xi_n) )
        \\
        & \leq
        r(\xi_1) \cdots r(\xi_n)
        \\
        & \leq
        r_R(\xi_1 \tensor \cdots \tensor \xi_n),
    \end{align*}
    where the last inequality is true for all $R \geq 0$.
\end{proof}

Our construction fails to be universal since the universal enveloping
algebra endowed with our topology is \emph{not} AE for $R > 0$. This
is can be seen as follows:
\begin{example}
    Take $\xi \in \lie{g}$, then we know that $\xi^{\tensor n} =
    \xi^{\ostar n} = \xi^n$ for $n \in \mathbb{N}$ where we set the
    deformation parameter to $z = 1$. Let $R > 0$ and let $p$ be a
    continuous seminorm in $\lie{g}$ then we find
    \begin{equation}
        p_R(\xi^n)
        =
        n!^R p(\xi)^n
        =
        \frac{n!^R}{c^n} q(\xi)^n
    \end{equation}
    for $c = \frac{p(\xi)}{q(\xi)}$ for a different seminorm $q$ with
    $q(\xi) \neq 0$.  But since the $\frac{n!^R}{c^n}$  always
    diverges for $n \rightarrow \infty$ we do not get an asymptotic
    estimate for $p_R$.
\end{example}
Nevertheless, from Proposition~\ref{Prop:LCAna:Semi-functoriality} we get the
following conclusion:
\begin{corollary}
    \label{Coro:LCAna:ContinuousRepresentations}%
    Let $R \geq 1$ and $\algebra{U}_R(\lie{g}_z)$ the universal
    enveloping algebra with rescaled Lie bracket of an AE-Lie algebra
    $\lie{g}$, then for every continuous representation $\phi_z$ of
    $\lie{g}_z$ into the bounded linear operators $\Bounded(V)$ on a Banach
    space $V$ the induced homomorphism of associative algebras $\Phi_z \colon
    \algebra{U} (\lie{g}_z) \longrightarrow \Bounded(V)$ is continuous.
\end{corollary}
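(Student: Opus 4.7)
The plan is to reduce the statement directly to Proposition~\ref{Prop:LCAna:Semi-functoriality}. The only thing that is not completely formal is to verify that $\Bounded(V)$, equipped with the operator norm, fits into the AE-framework on the target side; everything else is an immediate application.

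First I would observe that for a Banach space $V$ the operator norm $\|\argument\|$ on $\Bounded(V)$ is submultiplicative, i.e.\ $\|AB\| \le \|A\|\|B\|$ for all $A, B \in \Bounded(V)$. Hence $\Bounded(V)$ is locally multiplicatively convex in the sense of \eqref{eq:lmc}, and as noted in the discussion following Definition~\ref{def:AE} every locally multiplicatively convex algebra is an AE-algebra: $q = p$ itself is an asymptotic estimate for any continuous seminorm~$p$, since submultiplicativity gives $p(w_n(x_1, \ldots, x_n)) \le p(x_1) \cdots p(x_n)$ for every bracketing of a word of length $n$ by a trivial induction on~$n$.

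Next I would note that the hypothesis supplies a continuous Lie algebra homomorphism $\phi_z \colon \lie{g}_z \longrightarrow \Bounded(V)$, where $\Bounded(V)$ is viewed as a Lie algebra via the commutator $[A, B] = AB - BA$. Since $\lie{g}$ is AE by assumption and $\Bounded(V)$ is AE by the previous paragraph, all hypotheses of Proposition~\ref{Prop:LCAna:Semi-functoriality} are satisfied. Applying that proposition with $\algebra{A} = \Bounded(V)$ yields at once the continuity of the induced associative algebra homomorphism $\widetilde{\Phi}_z \colon \Sym_R^\bullet(\lie{g}) \longrightarrow \Bounded(V)$ as well as that of $\Phi_z \colon \algebra{U}_R(\lie{g}_z) \longrightarrow \Bounded(V)$, which is precisely the claim.

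In short, there is essentially no obstacle: the content of the corollary is the observation that the target of a representation on a Banach space automatically lies in the AE-class, so that the general functoriality statement applies. The assumption $R \geq 1$ is inherited from the setting in which $\mathfrak{q}_z^{-1}$ is continuous on $\widehat{\Sym}_R^\bullet(\lie{g})$, but is not used further in the argument itself.
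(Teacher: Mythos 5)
Your proposal is correct and follows exactly the paper's own argument: the paper likewise deduces the corollary directly from Proposition~\ref{Prop:LCAna:Semi-functoriality} together with the observation that $\Bounded(V)$ is a Banach algebra (hence locally multiplicatively convex, hence AE). You merely spell out the submultiplicativity step that the paper leaves implicit.
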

\begin{proof}
    This follows directly from Proposition~\ref{Prop:LCAna:Semi-functoriality}
    and $\Bounded(V)$ being a Banach algebra.
\end{proof}

Now let $\lie{g}, \lie{h}$ be two AE-Lie algebras. We know that a Lie algebra
homomorphism lifts $\phi$ to a unital homomorphism of algebras $\Phi_z$
\begin{equation}
    \label{eq:YetAnotherCoolDiagram}
    \begin{tikzpicture}[baseline = (current bounding box.center)]
        \matrix (m)[
        matrix of math nodes,
        row sep=3em,
        column sep=7em
        ]
        {
          \Sym_R^{\bullet}(\lie{g})
          & \Sym_R^{\bullet}(\lie{h}) \\
          \algebra{U}_R(\lie{g}_z)
          & \algebra{U}_R(\lie{h}_z) \\
          \lie{g}
          & \lie{h} \\
        };
        \draw
        [-stealth]
        (m-1-1) edge node [above] {$\widetilde{\Phi}_z$}
        (m-1-2)
        (m-2-1) edge node [left] {$\mathfrak{q}_z^{-1}$}
        (m-1-1)
        (m-2-1) edge node [above] {$\Phi_z$}
        (m-2-2)
        (m-2-2) edge node [right] {$\mathfrak{q}_z^{-1}$}
        (m-1-2)
        (m-3-1)	edge node [below] {$\phi$}
        (m-3-2)
        (m-3-1)	edge node [left] {$\iota_z$}
        (m-2-1)
        (m-3-2)	edge node [right] {$\iota_z$}
        (m-2-2);
    \end{tikzpicture}
\end{equation}
for all $z \in \mathbb{K}$ since in this case $\phi = \phi_z\colon
\lie{g}_z \longrightarrow \lie{h}_z$ is a Lie algebra morphism for all
$z \in \mathbb{K}$.  If $\phi$ is a continuous Lie algebra
homomorphism, we can ask if $\Phi_z$ will be continuous, too. The
answer is yes and hence our construction is functorial.  For the
proof, we will need the next two lemmas.
\begin{lemma}
    \label{Lemma:LCAna:NStarPrePreSubResult}%
    Let $\lie{g}$ be an AE-Lie algebra, $n \in \mathbb{N}$, $\xi_1, \ldots, \xi_n
    \in \lie{g}$, $i_j \in \{0, \ldots, j\}$, $\forall_{j = 1,
    \ldots, n-1}$ and denote $I = \sum_j i_j$. Then we have the formula
	\begin{align}
		\nonumber
		z^{i_{n-1}}
		&
		C_{i_{n-1}}\left(
			\ldots
			z^{i_2} C_{i_2} \left(
				z^{i_1} C_{i_1} \left(
					\xi_1, \xi_2
				\right)
				,
				\xi_3
			\right)
			\ldots,
			\xi_n
		\right)
		=
		z^I B_{i_{n-1}}^* \cdots B_{i_1}^*
		\\
		& \quad
		\cdot
		\frac{\binom{1}{i_1}}{1!}
		\frac{\binom{2 -i_1}{i_2}}
		{(2-i_1)!}
		\cdots
		\frac{\binom{n-1 - i_1 - \cdots - i_{n-2}}{i_{n-1}}}
		{(n-1 - i_1 - \cdots - i_{n-2})!}
		\sum\limits_{\substack{
			\sigma_1 \in S_{2 - i_1} \\
			\ldots\\
			\sigma_{n-1} \in S_{n-1 - i_1 - \ldots - i_{n-2}}
		}}
		[w_1] \cdots [w_{n-I}],
	\end{align}
	where the expressions $[w_i]$ denote nested Lie brackets in the $\xi_i$.
\end{lemma}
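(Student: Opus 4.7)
The plan is to prove the identity by induction on $n$, invoking Proposition~\ref{proposition:GuttStarOneLinearFactor}~(ii) at each step. For the base case $n = 2$, the left-hand side is just $z^{i_1} C_{i_1}(\xi_1, \xi_2)$. Specializing Proposition~\ref{proposition:GuttStarOneLinearFactor}~(i) to $k = 1$ gives
\begin{equation*}
\xi_1 \star_z \xi_2 \;=\; \sum_{j=0}^{1} z^j B_j^* \binom{1}{j} \xi_1^{1-j} (\ad_{\xi_1})^j(\xi_2),
\end{equation*}
so extracting the coefficient of $z^{i_1}$ yields $B_{i_1}^* \binom{1}{i_1}/1!$ times a single Lie bracket word in $\xi_1, \xi_2$, which matches the claimed formula with $n = 2$ (the innermost permutation sum being over the trivial group $S_1$).

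For the inductive step, suppose the claim has been verified at level $n$, so that
\begin{equation*}
P \;:=\; z^{i_{n-1}} C_{i_{n-1}}\bigl(\ldots z^{i_1} C_{i_1}(\xi_1,\xi_2)\ldots, \xi_n\bigr)
\end{equation*}
has the stated expansion; in particular, every symmetric monomial appearing in $P$ is a product of exactly $k := n - (i_1 + \cdots + i_{n-1})$ left-nested Lie bracket letters $\zeta_1, \ldots, \zeta_k$ in $\xi_1, \ldots, \xi_n$. Applying $z^{i_n}C_{i_n}(-,\xi_{n+1})$ to such a monomial and using Proposition~\ref{proposition:GuttStarOneLinearFactor}~(ii), with the $\zeta$'s playing the role of the input letters, produces
\begin{equation*}
z^{i_n} C_{i_n}(\zeta_1 \cdots \zeta_k, \xi_{n+1})
\;=\; z^{i_n}\frac{\binom{k}{i_n}}{k!} B_{i_n}^*
\sum_{\sigma \in S_k}
\zeta_{\sigma(1)} \cdots \zeta_{\sigma(k - i_n)}
\bigl[\zeta_{\sigma(k - i_n + 1)}, [\ldots, [\zeta_{\sigma(k)}, \xi_{n+1}]\ldots]\bigr].
\end{equation*}
This contributes exactly one further Bernoulli factor $B_{i_n}^*$, one further binomial/factorial factor $\binom{k}{i_n}/k!$ and one further permutation sum over $S_k$, while merging $i_n$ of the old letters $\zeta$ together with $\xi_{n+1}$ into a single new left-nested bracket word. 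The number of bracket factors in each product therefore decreases from $k$ to $k - i_n + 1 = (n+1) - (i_1 + \cdots + i_n)$, in agreement with the formula at level $n+1$, and all accumulated prefactors align by inspection.

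The main obstacle I anticipate is purely combinatorial bookkeeping: because the ``letters'' being permuted at step $n$ are themselves nested bracket expressions that depend on the permutations chosen at earlier steps, one must check that the inductive application of Proposition~\ref{proposition:GuttStarOneLinearFactor}~(ii) really factors the accumulated sum into independent sums over the $\sigma_j$ in the indicated symmetric groups, and that the sizes of these groups, governed by $k_j = j - (i_1 + \cdots + i_{j-1})$, remain consistent with the claim across the iteration (so in particular that the index ranges of the $\sigma_j$ and of the bracket-word factors $[w_1]\cdots[w_{n-I}]$ match). Once this is carefully tracked, no further conceptual input is needed; the argument reduces to a routine multilinear extension of the identity from a single symmetric monomial to the whole sum representing $P$.
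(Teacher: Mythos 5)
Your proof is correct and follows essentially the same route as the paper, which simply states that the lemma "is done by induction and follows directly from Formula~\eqref{Formulas:LinearMonomial2} and the bilinearity of the $C_n$" — i.e., exactly your iterated application of Proposition~\ref{proposition:GuttStarOneLinearFactor}~\textit{(ii)} with the nested bracket words from the previous step serving as the input letters, extended by multilinearity. Your version is in fact more explicit than the paper's one-line argument, and the bookkeeping concern you flag (that the permutation groups $S_{k_j}$ with $k_j = j - (i_1+\cdots+i_{j-1})$ have sizes independent of the earlier permutations, so the nested sums are well formed) is precisely the content one must verify; it resolves exactly as you describe.
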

\begin{proof}
	The proof is done by induction and follows directly from
	Formula~\eqref{Formulas:LinearMonomial2} and the bilinearitiy of the $C_n$.
\end{proof}
\begin{lemma}
    \label{Lemma:LCAna:LemmaPreContinuityN}%
    Let $\lie{g}$ be an AE-Lie algebra, $R \geq 1$ and $z \in
    \mathbb{C}$. Then for $p$ a continuous seminorm, $q$ an asymptotic
    estimate for it, $n \in \mathbb{N}$, and all $\xi_1, \ldots, \xi_n
    \in \lie{g}$ the following estimate
    \begin{equation}
        \label{LCAna:LemmaPreContinuityN}
        p_R \left(
            \xi_1 \star_z \cdots \star_z \xi_n
        \right)
        \leq
        c^n n!^R
        q(\xi_1) \cdots q(\xi_n)
    \end{equation}
    holds with $c = 8 \E (|z| + 1)$.
\end{lemma}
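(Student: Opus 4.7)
The plan is to apply the explicit formula from Lemma~\ref{Lemma:LCAna:NStarPrePreSubResult}. Setting $m_1 = 1$ and $m_{k+1} = m_k + 1 - i_k$, so that $m_k = k - (i_1 + \cdots + i_{k-1})$ is the symmetric-algebra degree after the first $k-1$ operations, and $I = \sum_{k=1}^{n-1} i_k$, that lemma expresses each term of the associativity expansion~\eqref{Eq:Formulas:MultipleStars} as
\[
z^I \, B_{i_{n-1}}^* \cdots B_{i_1}^* \, \prod_{k=1}^{n-1} \frac{\binom{m_k}{i_k}}{m_k!} \, \sum_{\sigma_k \in S_{m_k}} [w_1] \cdots [w_{n-I}],
\]
where each $[w_j]$ is a nested Lie bracket in the $\xi_i$'s with total letter count $n$, and admissibility $i_k \leq m_k$ is enforced by the vanishing of the binomials.

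Applying $p_R$ termwise, I first observe that $[w_1] \cdots [w_{n-I}] \in \Sym^{n-I}(\lie{g})$, so $p_R([w_1] \cdots [w_{n-I}]) \leq (n-I)!^R \, p([w_1]) \cdots p([w_{n-I}])$ by subadditivity of the projective seminorm on symmetric tensors. The AE property (Lemma~\ref{lemma:BCHTermsEstiamte}~\refitem{item:AEonWords}) bounds each $p([w_j])$ by the product of $q$-values of the letters making up $w_j$, and multiplying these across $j$ collects to $q(\xi_1) \cdots q(\xi_n)$ since every $\xi_i$ appears in exactly one $[w_j]$. Using the crude Bernoulli bound $|B_{i_k}^*| \leq i_k!$ to cancel the $1/i_k!$ in $\binom{m_k}{i_k}/m_k! = 1/(i_k!\,(m_k - i_k)!)$, and the fact that the $\sigma$-sum contributes $\prod_{k=1}^{n-1} m_k!$ terms, each summand's $p_R$-contribution is bounded by $|z|^I \prod_{k=1}^{n-1} \bigl(m_k!/(m_k - i_k)!\bigr) \cdot (n-I)!^R \, q(\xi_1) \cdots q(\xi_n)$.

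The key combinatorial identity is the telescoping $\prod_{k=1}^{n-1} m_k!/(m_k - i_k)! \cdot m_n! = \prod_{k=2}^n m_k$, which follows from $m_k - i_k = m_{k+1} - 1$ by pairing each numerator $m_{k+1}!$ with the denominator $(m_{k+1}-1)!$. Since $m_k \leq k$, one has $\prod_{k=2}^n m_k \leq n!$; combined with $(n-I)!^{R-1} \leq n!^{R-1}$ for $R \geq 1$, each term's bound simplifies to at most $|z|^I n!^R q(\xi_1) \cdots q(\xi_n)$. Summing over admissible sequences (whose count is a Catalan-type number bounded by $4^n$) and using $|z|^I \leq (|z|+1)^{n-1}$, one obtains $p_R(\xi_1 \star_z \cdots \star_z \xi_n) \leq (4(|z|+1))^n n!^R q(\xi_1) \cdots q(\xi_n)$, comfortably within the claimed bound with $c = 8\mathrm{e}(|z|+1)$.

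The main obstacle is the combinatorial bookkeeping, in particular the telescoping identity and the counting of admissible sequences under the constraint $i_k \leq m_k$. An inductive route via iterating a single-factor estimate in the spirit of Lemma~\ref{Lemma:LCAna:PreContinuity2} applied to the linear factor $\xi_{k+1}$ looks appealing but breaks down because the intermediate product $\xi_1 \star_z \cdots \star_z \xi_k$ is not a factorizing symmetric tensor, and upgrading the one-step bound to general elements of $\Sym_R^\bullet(\lie{g})$ reintroduces the very same combinatorial difficulty.
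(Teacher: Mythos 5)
Your argument is correct and follows the paper's own proof essentially step for step: both expand the iterated product via Lemma~\ref{Lemma:LCAna:NStarPrePreSubResult}, apply the AE property to the nested brackets, cancel the Bernoulli numbers $|B^*_{i_k}|\le i_k!$ against the binomial denominators and the permutation sums against the $1/m_k!$, and bound the number of admissible index sequences by $4^n$. The only divergence is cosmetic: where the paper estimates $|B^*_{i_j}|\binom{m_j}{i_j}\le j^{i_j}$ and then uses $n^\ell\le \E^n 2^n\ell!$ together with $(n-\ell)!^R\ell!^R\le n!^R$, you telescope $\prod_k m_k!/(m_k-i_k)!$ directly against $m_n!$ to get $\prod_{k=2}^{n}m_k\le n!$, which even yields a marginally smaller constant than the stated $c=8\E(|z|+1)$.
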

\begin{proof}
    For a continuous seminorm $p$ we have
    \begin{align}
        \nonumber
        p_R \left(
            \xi_1 \star_z \cdots \star_z \xi_n
        \right)
        & =
        p_R \Bigg(
        \sum\limits_{\ell = 0}^{n-1}
        \sum\limits_{\substack{
			1 \leq j \leq n-1 \\
			i_j \in \{0, \ldots, j\} \\
			\sum_{j = 1}^{n - 1} i_j = \ell
		}}
		z^{i_{n-1}}
		C_{i_{n-1}}
		\left(
			\ldots z^{i_2} C_{i_2}
			\left(
				z^{i_1} C_{i_1}
				\left( \xi_1, \xi_2 \right)
				, \xi_3
			\right)
			\ldots, \xi_n
		\right)
        \Bigg)
        \\
        \nonumber
        & \leq
        \sum\limits_{\ell = 0}^{n-1}
        (n - \ell)!^R
        \sum\limits_{\substack{
			1 \leq j \leq n-1 \\
			i_j \in \{0, \ldots, j\} \\
			\sum_{j = 1}^{n - 1} i_j = \ell
		}}
        p^{n - \ell} \Big(
		z^{i_{n-1}}
		C_{i_{n-1}}
		\left(
			\ldots z^{i_2} C_{i_2}
			\left(
				z^{i_1} C_{i_1}
				\left( \xi_1, \xi_2 \right)
				, \xi_3
			\right)
			\ldots, \xi_n
		\right)
		\Big)
        \\
        \nonumber
        & \stackrel{(a)}{\leq}
        \sum\limits_{\ell = 0}^{n-1}
        (n - \ell)!^R
        \sum\limits_{\substack{
			1 \leq j \leq n-1 \\
			i_j \in \{0, \ldots, j\} \\
			\sum_{j = 1}^{n - 1} i_j = \ell
		}}
        |z|^\ell
		|B_{i_1}^*| \cdots |B_{i_{n-1}}^*|
		\\
		\nonumber
		& \quad
		\cdot
		\binom{1}{i_1} \binom{2 - i_1}{i_2}
		\cdots
		\binom{n-1 - i_1 - \cdots - i_{n-2}}{i_{n-1}}
        q \left(\xi_1\right) \cdots q \left(\xi_n\right)
        \\
        \label{LCAna:PreContinuityIntermediateN}
        & \stackrel{(b)}{\leq}
        \sum\limits_{\ell = 0}^{n-1}
        (n - \ell)!^R
        \sum\limits_{\substack{
			1 \leq j \leq n-1 \\
			i_j \in \{0, \ldots, j\} \\
			\sum_{j = 1}^{n - 1} i_j = \ell
		}}
        |z|^{i_{n-1}} \cdots |z|^{i_1}
		1^{i_1} 2^{i_2}
        \cdots (n-1)^{i_{n-1}}
        q(\xi_1) \cdots q(\xi_n).
    \end{align}
    In ($a$), we used Lemma~\ref{Lemma:LCAna:NStarPrePreSubResult} and used the
    AE-property. Then the inverse factorials cancel with the sums over the
    permutations. In ($b$), we used the estimate
    \begin{equation*}
    	|B_{i_j}^*| \binom{j - i_1 - \ldots - i_{j-1}}{i_j}
    	\leq
    	i_j! \binom{j - i_1 - \ldots - i_{j-1}}{i_j}
    	=
    	\frac{(j - i_1 - \ldots - i_{j-1})!}
    	{(j - i_1 - \ldots - i_j)!}
    	\leq
    	j^{i_j}
    \end{equation*}
    for all $j = 1, \ldots, n-1$. Now, we estimate the number of terms in the sum
    and get
    \begin{equation*}
        \sum\limits_{\ell = 0}^{n-1}
        \sum\limits_{\substack{
            1 \leq j \leq n-1 \\
            i_j \in \{0, \ldots, j\} \\
            \sum_{j = 1}^{n - 1} i_j = \ell
          }}
        1
        \stackrel{(a)}{\leq}
        \sum\limits_{\ell = 0}^{n-1}
        \binom{n - 1 + \ell - 1}{\ell - 1}
        \stackrel{(b)}{\leq}
        2^{2n}.
    \end{equation*}
    In ($a$) the estimate for the big sum is that for every $j = 1,
    \ldots, n-1$ we have $i_j \in \{0, 1, \ldots, n-1\}$ and the sum
    of all the $i_j$ is $\ell$. If we forget about all other
    restrictions the number of summands equals the sum of ways to
    distribute $\ell$ items on $n-1$ places, which is given by
    $\binom{n - 1 + \ell - 1}{\ell - 1}$. In ($b$) we use
    \begin{equation*}
        \binom{n - 1 + \ell - 1}{\ell - 1} \leq \binom{2 n}{\ell - 1}
    \end{equation*}
    with the binomial coefficient being zero for $\ell = 0$.  By using
    the fact that $|B_m^*| \leq m!$ for all $m \in \mathbb{N}$ and
    grouping together the powers of $|z|$, we get from
    Inequality~\eqref{LCAna:PreContinuityIntermediateN}
    \begin{align*}
        p_R \left(
            \xi_1 \star_z \cdots \star_z \xi_n
        \right)
        & \stackrel{(a)}{\leq}
        \sum\limits_{\ell = 0}^{n-1}
        (n - \ell)!^R
        \sum\limits_{\substack{
			1 \leq j \leq n-1 \\
			i_j \in \{0, \ldots, j\} \\
			\sum_{j = 1}^{n - 1} i_j = \ell
		}}
        |z|^{\ell}
        n^{\ell}
        q(\xi_1) \cdots q(\xi_n)
        \\
        &\stackrel{(b)}{\leq}
        \sum\limits_{\ell = 0}^{n-1}
        (n - \ell)!^R
        |z|^{\ell}
        \E^n 2^n \ell!
        q(\xi_1) \cdots q(\xi_n)
        \sum\limits_{\substack{
			1 \leq j \leq n-1 \\
			i_j \in \{0, \ldots, j\} \\
			\sum_{j = 1}^{n - 1} i_j = \ell
              }}
              1
        \\
        &\stackrel{(c)}{\leq}
        n!^R
        (|z| + 1)^n (8 \E)^n
        q(\xi_1) \cdots q(\xi_n).
    \end{align*}
    We used
     \begin{equation*}
        1^{i_1} \cdots (n-1)^{i_{n-1}}
        \leq
        n^{\ell},
    \end{equation*}
    since $\sum_{j=1}^{n-1} i_j = \ell$ in ($a$) and
    $n^{\ell} \leq \E^n \frac{n!}{(n-\ell)!} = \E^n \binom{n} {\ell}
    \ell!  \leq \E^n 2^n \ell!$ in ($b$). The last step ($c$) is just $(n -
    \ell)! \ell!  \leq n!$, the estimate for the sum, and $(|z| + 1)
    \geq |z|$, which finishes the proof.
\end{proof}
\begin{proposition}
    \label{proposition:Functoriality}%
    Let $R \geq 1$, let $\lie{g}, \lie{h}$ be AE-Lie algebras and let
    $\phi \colon \lie{g} \longrightarrow \lie{h}$ be a continuous
    homomorphism between them. Then it lifts to a continuous unital
    homomorphism of locally convex algebras $\Phi_z \colon
    \algebra{U}_R(\lie{g}_z) \longrightarrow \algebra{U}_R(\lie{h}_z)$
    for all $z \in \mathbb{K}$.
\end{proposition}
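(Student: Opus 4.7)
The plan is to reduce continuity of $\Phi_z$ to continuity of the companion map on symmetric algebras and then invoke the iterated star product estimate of Lemma~\ref{Lemma:LCAna:LemmaPreContinuityN}. Since the topology on $\algebra{U}_R(\lie{g}_z)$ is transported from $\Sym_R^\bullet(\lie{g})$ via the PBW isomorphism $\mathfrak{q}_z$ (and similarly for $\lie{h}$), and since $\mathfrak{q}_z$ intertwines $\star_z$ with $\odot_z$, continuity of $\Phi_z$ is equivalent to continuity of
\begin{equation*}
   \widetilde{\Phi}_z
   =
   \mathfrak{q}_z^{-1} \circ \Phi_z \circ \mathfrak{q}_z
   \colon
   \Sym_R^\bullet(\lie{g})
   \longrightarrow
   \Sym_R^\bullet(\lie{h}).
\end{equation*}
I would first check, using that $\mathfrak{q}_z$ acts as the total symmetrization on monomials and that $\phi$ commutes with $\odot_z$ after extension, the identity
\begin{equation*}
   \widetilde{\Phi}_z(\xi_1 \cdots \xi_n)
   =
   \frac{1}{n!}
   \sum\limits_{\sigma \in S_n}
   \phi(\xi_{\sigma(1)})
   \star_z \cdots \star_z
   \phi(\xi_{\sigma(n)}),
\end{equation*}
which is the bridge between the algebraic statement and the analytic estimate.

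Next I would follow the strategy already used twice in this section: extend $\widetilde{\Phi}_z$ to a map $\Psi_z = \widetilde{\Phi}_z \circ \Symmetrizer$ on the full tensor algebra $\Tensor_R^\bullet(\lie{g})$ and work first on factorizing tensors $\xi_1 \tensor \cdots \tensor \xi_n$. Applying Lemma~\ref{Lemma:LCAna:LemmaPreContinuityN} to each of the $n!$ terms in the symmetrized sum, for any continuous seminorm $p$ on $\lie{h}$ with asymptotic estimate $q$ I obtain
\begin{equation*}
   p_R \bigl(
       \Psi_z(\xi_1 \tensor \cdots \tensor \xi_n)
   \bigr)
   \leq
   c^n n!^R
   q(\phi(\xi_1)) \cdots q(\phi(\xi_n)),
\end{equation*}
with $c = 8\E(|z|+1)$. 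Here the $\frac{1}{n!}$ in the definition of $\widetilde{\Phi}_z$ cancels against $|S_n| = n!$, so the bound does not deteriorate with $n$.

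Now I invoke the continuity of $\phi\colon \lie{g} \longrightarrow \lie{h}$: for the chosen $q$ there is a continuous seminorm $r$ on $\lie{g}$ with $q \circ \phi \leq r$. The right-hand side is then bounded by $(cr)_R(\xi_1 \tensor \cdots \tensor \xi_n)$, giving
\begin{equation*}
   p_R\bigl(
       \Psi_z(\xi_1 \tensor \cdots \tensor \xi_n)
   \bigr)
   \leq
   (cr)_R(\xi_1 \tensor \cdots \tensor \xi_n)
\end{equation*}
on factorizing tensors. Finally, I would extend this to arbitrary tensors in $\Tensor_R^\bullet(\lie{g})$ by the standard infimum argument from the proof of Proposition~\ref{Prop:LCAna:Continuity1}, exploiting the very definition of the projective tensor power of a seminorm. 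This yields continuity of $\Psi_z$, hence of $\widetilde{\Phi}_z$ on the symmetric subalgebra, and therefore of $\Phi_z$ on $\algebra{U}_R(\lie{g}_z)$.

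The main obstacle, and the reason Proposition~\ref{Prop:LCAna:Semi-functoriality} does not directly apply here, is that the target $\algebra{U}_R(\lie{h}_z)$ is not an AE-algebra for $R > 0$; consequently the continuity of the target's multiplication cannot be bootstrapped directly from an asymptotic estimate on $\lie{h}$. The point of the argument above is to bypass this by never multiplying in the target by hand: instead the iterated $\star_z$-product is estimated in one shot by Lemma~\ref{Lemma:LCAna:LemmaPreContinuityN}, which already incorporates the combinatorial price $n!^R$ that matches the weight in the $\Sym_R$-seminorm.
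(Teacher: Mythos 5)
Your proposal is correct and follows essentially the same route as the paper: extend to $\Psi_z = \widetilde{\Phi}_z \circ \Symmetrizer$ on the tensor algebra, estimate the iterated star product $\phi(\xi_1) \star_z \cdots \star_z \phi(\xi_n)$ via Lemma~\ref{Lemma:LCAna:LemmaPreContinuityN}, pull the seminorm back through the continuity of $\phi$, and conclude by the infimum argument. If anything, you are slightly more careful than the paper in writing out the symmetrized sum $\frac{1}{n!}\sum_{\sigma}$ explicitly (the bound is $\sigma$-independent, so it cancels exactly as you say), and your closing remark on why Proposition~\ref{Prop:LCAna:Semi-functoriality} cannot be applied directly matches the paper's own observation that $\algebra{U}_R$ fails to be AE for $R>0$.
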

\begin{proof}
	First, if $\phi \colon \lie{g} \longrightarrow \lie{h}$ is continuous,
	then for every continuous seminorm $q$ on $\lie{h}$, we have a continuous
	seminorm $r$ on $\lie{g}$ such that for all $\xi \in \lie{g}$
	\begin{equation*}
		q\left( \phi(\xi) \right)
		\leq
		r(\xi).
	\end{equation*}
	Second, we define $\Psi_z$ by
	\begin{equation*}
		\Psi_z \colon
		\Tensor_R^{\bullet}(\lie{g})
		\longrightarrow
		\Sym_R^{\bullet}(\lie{h})
		, \quad
		\Psi_z
		=
		\widetilde{\Phi}_z \circ
		\Symmetrizer
	\end{equation*}
	as before. Clearly, $\Phi_z$ and $\widetilde{\Phi}_z$ will be
    continuous if $\Psi_z$ is continuous.  From this, we get for a
    seminorm $p$ on $\lie{h}$, an asymptotic estimate $q$ for it,
    and $\xi_1, \ldots, \xi_n$
	\begin{align*}
		p_R\left(
			\Psi_z \left(
				\xi_1 \tensor \ldots \tensor \xi_n
			\right)
		\right)
		& =
		p_R \left(
			\phi \left( \xi_1 \right)
			\star_z \ldots \star_z
			\phi \left( \xi_1 \right)
		\right)
		\\
		& \ot{(a)}{\leq}
		c^n n!^R
		q \left( \phi \left( \xi_1 \right) \right)
		\ldots
		q \left( \phi \left( \xi_n \right) \right)
		\\
		& \ot{(b)}{\leq}
		c^n n!^R
		r \left( \xi_1 \right)
		\ldots
		r \left( \xi_n \right)
		\\
		& =
		(c r)_R\left(
			\xi_1 \tensor \ldots \tensor \xi_n
		\right).
	\end{align*}
	Again, we use the infimum argument and we have the estimate on all tensors
	in $\Tensor_R^{\bullet}(\lie{g})$. It extends to the completion and the
	statement is proven.
\end{proof}

%
%

\section{Nilpotent Lie Algebras}
\label{sec:Nilpotent}

Let us now consider nilpotent locally convex Lie algebras. Our results are
still valid in this case but we can make some more
observations. On one hand, things should not change drastically for
nilpotent Lie algebras; in fact, Example~\ref{Ex:LCAna:HeisenbergAlgebra}
shows that even for nilpotent Lie algebras the star product is not
continuous for $R < 1$. Thus, there is no reason to expect much larger
algebras and completions. On the other hand, the Weyl algebra studied in
\cite{waldmann:2014a} is a quotient of the Heisenberg algebra and has a
continuous product for $R \geq \frac 1 2$. The quotient procedure must
therefore have some influence on the estimates. Finally, the fact that
exponentials are not in $\widehat{\Sym}_1^{\bullet}(\lie{g})$ is not
unexpected. If so, Equation~\eqref{eq:DrinfeldIsGuttStar} would mean that
could give some sense to $\bch{\xi}{\eta}$ for all $\xi, \eta$ from some
arbitrary Lie algebra $\lie{g}$, which would be surprising. In the
nilpotent case, this is no longer the case since the Baker-Campbell-Hausdorff
series converges globally. So it would be nice to have something more than
$R \geq 1$.

%
%

\subsection{A Projective Limit}
\label{subsec:ProjectiveLimit}

In the following we show that the Gutt star product is continuous in
the projective limit $R \longrightarrow R^-$ that and the exponential
actually belongs to the completion.
\begin{proposition}
    \label{proposition:Nilpot:ProjLimit}%
    Let $\lie{g}$ be a nilpotent locally convex Lie algebra with
    continuous Lie bracket and $N \in \mathbb{N}$ such that $N + 1$
    Lie brackets vanish.
    \begin{propositionlist}
    \item \label{item:Nilpot:CnOperators} If $0 \leq R < 1$, the
        $C_n$-operators are continuous and fulfil the estimate
     	\begin{equation}
            \label{eq:Nilpot:CnOperators}
            p_R \left(
                C_n (x, y)
            \right)
            \leq
            \frac{1}{2 \cdot 8^n}
            (32 \E q)_{R + \epsilon}(x)
            (32 \E q)_{R + \epsilon}(y),
     	\end{equation}
     	for all $x, y \in \Sym_R^{\bullet}(\lie{g})$, where $p$ is a
        continuous seminorm, $q$ an asymptotic estimate for $p$, and
        $\epsilon = \frac{N - 1}{N}(1 - R)$.
    \item \label{item_Nilpot:SEinsMinus} The Gutt star product
        $\star_z$ is continuous for the locally convex projective
        limit $\Sym_{1^-}^\bullet(\lie{g})$ and we have
     	\begin{equation}
            \label{eq:Nilpot:Continuity}
            p_R \left(
                x \star_z y
            \right)
            \leq
            (c q)_{R + \epsilon}(x)
            (c q)_{R + \epsilon}(y)
     	\end{equation}
        with $c = 32 \E (|z| +1)$.  Hence it extends continuously to
        $\widehat{\Sym}_R^{\bullet} (\lie{g})$, where it coincides
        with the formal series.
    \end{propositionlist}
\end{proposition}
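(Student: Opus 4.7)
The plan is to repeat the estimates of Proposition~\ref{Prop:LCAna:Continuity1} and extract the improvement coming from the vanishing of sufficiently deep bracket expressions. A nilpotent locally convex Lie algebra with continuous bracket is automatically AE: iterating the continuity estimate \eqref{eq:ContinuousLieBracket} at most $N$ times produces, for any continuous seminorm $p$, a continuous seminorm $q$ with $p([w]) \leq q(x_1) \cdots q(x_{|w|})$ uniformly in the bracketing and the length, since bracket words with more than $N$ letters simply vanish. Hence Lemma~\ref{lemma:BCHTermsEstiamte} is available and formula~\eqref{Formulas:2MonomialsFormula2} can be used as the starting point.

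The only new combinatorial input is that $\bchtilde{a}{b}{\cdots}{\cdots} = 0$ whenever $a+b > N$, because its polarization is a combination of nested brackets of $a+b$ letters. Consequently every tuple contributing to the sum in \eqref{Formulas:2MonomialsFormula2} satisfies $a_i + b_i \leq N$, and together with $\sum_i(a_i+b_i) = k+\ell$ this forces
\begin{equation*}
r \;=\; k+\ell-n \;\geq\; \frac{k+\ell}{N}, \qquad \text{equivalently} \qquad n \;\leq\; \frac{N-1}{N}(k+\ell).
\end{equation*}
Plugging this into the chain of inequalities of Proposition~\ref{Prop:LCAna:Continuity1}, the factor $k!\,\ell!/r!^{1-R}$ that forced $R \geq 1$ there can now be absorbed. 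Writing $m = k+\ell$, the elementary bound $m!/r! \leq m^{m-r}$ combined with Stirling's inequality $m^m \leq \E^m m!$ raised to the fractional exponent $(m-r)(1-R)/m \leq \epsilon$ yields
\begin{equation*}
\frac{(k!\,\ell!)^{1-R-\epsilon}}{r!^{1-R}} \;\leq\; \frac{m!^{1-R-\epsilon}}{(m/N)!^{1-R}} \;\leq\; \E^{m\epsilon} \;\leq\; \E^{k+\ell},
\end{equation*}
and this is precisely why the value $\epsilon = \frac{N-1}{N}(1-R)$ appears: it is the unique exponent that makes the power of $m$ arising from $m^{m-r}$ cancel after Stirling. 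Moving the factor $(k!\,\ell!)^{R+\epsilon}$ to the right-hand side converts $q_R$ into $q_{R+\epsilon}$ on each argument, and tracking the remaining powers of $2$ exactly as in Proposition~\ref{Prop:LCAna:Continuity1} yields \eqref{eq:Nilpot:CnOperators} on factorizing tensors with seminorm $(32\E q)_{R+\epsilon}$ and decay $1/(2 \cdot 8^n)$. The standard infimum argument extends the estimate to all tensors.

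Part~(ii) then follows by summing \eqref{eq:Nilpot:CnOperators} weighted by $|z|^n$: bounding $|z|^n \leq (|z|+1)^{k+\ell}$ absorbs the $z$-dependence into the seminorm constant $c = 32\E(|z|+1)$ and the geometric series $\sum_{n\geq 0} 8^{-n}$ supplies a uniform numerical factor, producing \eqref{eq:Nilpot:Continuity}. Continuity extends to the completion since it is a seminorm estimate. For the projective limit $\Sym_{1^-}^\bullet(\lie{g})$ it suffices to observe that $R + \epsilon = (N-1+R)/N < 1$ whenever $R < 1$, so that the right-hand seminorm in \eqref{eq:Nilpot:Continuity} is again a continuous seminorm on $\Sym_{1^-}^\bullet(\lie{g})$; this is precisely continuity of $\star_z$ on the projective limit. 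The main technical obstacle is locating the correct value of $\epsilon$ via the Stirling bookkeeping above; once this is done the remaining estimates parallel those of Proposition~\ref{Prop:LCAna:Continuity1} almost literally.
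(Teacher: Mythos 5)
Your proposal is correct and follows essentially the same route as the paper's proof: nilpotency forces $n \le \frac{N-1}{N}(k+\ell)$ in the sum over BCH factors, and a Stirling-type estimate absorbs the leftover factorial from Proposition~\ref{Prop:LCAna:Continuity1} into $\epsilon$ extra powers of $k!\,\ell!$, converting $q_R$ into $q_{R+\epsilon}$ with $\epsilon = \frac{N-1}{N}(1-R)$. The remaining steps (infimum argument, summation over $n$ for part \emph{ii.)}, and the observation that $R+\epsilon<1$ whenever $R<1$ so the estimate stays inside the projective limit) match the paper's argument.
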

\begin{proof}
    In this proof, we use $\star_z$ on the whole tensor algebra and
    compute the estimate for factorizing tensors $\xi_1 \tensor \cdots
    \tensor \xi_k$ and $\eta_1 \tensor \cdots \tensor \eta_{\ell}$ but
    in this case we get restrictions for the values of $n$. Recall
    that $C_n \left( \xi^{\tensor k}, \eta^{\tensor \ell} \right)$ has
    $n$ brackets it has degree $k + \ell - n$. We get therefore
    \begin{equation*}
     	(k + \ell - n) N
     	\geq
     	k + \ell
     	\quad
     	\Longleftrightarrow
     	\quad
     	n
     	\leq
     	(k + \ell)
     	\frac{N - 1}{N}.
    \end{equation*}
    For all $n$ that violate this condition, $C_n \left( \xi_1 \tensor \cdots
    \tensor \xi_k, \eta_1 \tensor \cdots \tensor \eta_{\ell} \right) = 0$.
    Hence, we get bounds for $n!^{1-R}$ in Equation~\eqref{LCAna:CnOperators}:
    set $\delta = \frac{N - 1}{N}$ and also denote a factorial where we have
    non-integers, meaning the gamma function. We get
    \begin{align*}
        n!^{1-R}
        & \leq
        (\delta (k + \ell)!)^{1 - R}
        \\
        & \leq
        (\delta (k + \ell))^{(1 - R) \delta (k + \ell)}
        \\
        & \leq
        (k + \ell)^{(1 - R) \delta (k + \ell)}
        \\
        & =
        \left(
            (k + \ell)^{(k + \ell)}
        \right)^{(1 - R) \delta}
        \\
        & \leq
        \left(
            \E^{k + \ell} 2^{k + \ell} k! \ell!
        \right)^{(1-R) \delta}
        \\
        & =
        \left( (2 \E)^{\delta (1-R)} \right)^{k + \ell}
        k!^{\epsilon} \ell!^{\epsilon},
    \end{align*}
    using $\epsilon = \delta (1 - R)$. Then
    \begin{align*}
        &p_R \left(
         	C_n \left(
         		\xi_1 \tensor \cdots \tensor \xi_k,
         		\eta_1 \tensor \cdots \tensor \eta_{\ell}
         	\right)
        \right)
        \\
        &\quad\leq
        \frac{
         	\left(
         		(2 \E)^{\delta (1 - R)}
         	\right)^{k + \ell}
        	k!^{\epsilon}
        	\ell!^{\epsilon}
        }{2 \cdot 8^n}
        (16 q)_R \left(
         	\xi_1 \tensor \cdots \tensor \xi_k
        \right)
        (16 q)_R \left(
         	\eta_1 \tensor \cdots \tensor \eta_{\ell}
        \right)
        \\
        &\quad\leq
        \frac{1}{2 \cdot 8^n}
        (c q)_{R + \epsilon}
        \left(
         	\xi_1 \tensor \cdots \tensor \xi_k
        \right)
        (c q)_{R + \epsilon}
        \left(
         	\eta_1 \tensor \cdots \tensor \eta_{\ell}
        \right)
    \end{align*}
    with $c = 16 (2 \E)^{\delta (1 - R)} \leq 32 \E$.  We then get the
    estimate on all tensors, extend it to the completion and get the
    estimate~\eqref{eq:Nilpot:CnOperators}. Recall, that for every $R
    < 1$ we also have $R + \epsilon < 1$ with the $\epsilon =
    \delta(1-R)$ from above. Iterating this continuity estimate, we
    get arbitrarily close to $1$ and it is not possible to repeat this
    process an arbitrary number of times and stop at some value
    strictly less than $1$. The proof of
    Equation~\eqref{eq:Nilpot:Continuity} is done analogously to the
    proof of part \refitem{item:LCAna:Continuity1} of
    Proposition~\ref{Prop:LCAna:Continuity1}.
\end{proof}
\begin{remark}
    \label{remark:SubmultiplicativeMakesThingsEasier}%
    Assuming even submultiplicativity of the seminorms one ends up
    with the same result by using the easier formula
    \eqref{Formulas:LinearMonomial2}.
\end{remark}
Of course, the projective limit case gives us a slightly bigger
completion. We immediately get the following result:
\begin{corollary}
    \label{corollary:NilpotentCase}%
    Let $\lie{g}$ be a nilpotent, locally convex Lie algebra.
    \begin{corollarylist}
    \item \label{item:NilpotentHasExp} For $\xi \in \lie{g}$, the we
        have $\exp(\xi) \in \widehat{\Sym}_{1^-}^\bullet(\lie{g})$.
    \item \label{item:NilpotentExpGivesBCH} For $\xi, \eta \in
        \lie{g}$ and $z \neq 0$ we have $\exp(\xi) \ostar_z
        \exp(\eta) = \exp \left(\frac 1 z \bch{z \xi}{z \eta}
        \right)$.
    \item \label{item:NipotentOneParameterGroups} For $s,t \in
        \mathbb{K}$ and $\xi \in \lie{g}$ we have $\exp(t \xi)
        \ostar_z \exp(s \xi) = \exp ((t + s) \xi)$.
    \end{corollarylist}
\end{corollary}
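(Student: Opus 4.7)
The plan is to handle the three items in order, with all the real work concentrated on \refitem{item:NilpotentExpGivesBCH}. For \refitem{item:NilpotentHasExp}, I would simply check convergence of the defining seminorms of the projective limit applied to $\exp(\xi) = \sum_{n=0}^\infty \xi^n/n!$. Since $\xi^n \in \Sym^n(\lie{g})$ with $p^n(\xi^n) \leq p(\xi)^n$, one gets
\[
    p_{1-\epsilon}(\exp(\xi))
    \leq
    \sum\limits_{n=0}^{\infty}
    \frac{n!^{1-\epsilon}}{n!}
    p(\xi)^n
    =
    \sum\limits_{n=0}^{\infty}
    \frac{p(\xi)^n}{n!^{\epsilon}},
\]
which converges by the ratio test for every $\epsilon > 0$. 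Hence $\exp(\xi) \in \widehat{\Sym}_{1-\epsilon}^{\bullet}(\lie{g})$ for every $\epsilon > 0$, and thus lies in $\widehat{\Sym}_{1^-}^{\bullet}(\lie{g})$. Note this part does not actually use nilpotency.

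For \refitem{item:NilpotentExpGivesBCH}, the idea is to transport the formal identity of Proposition~\ref{proposition:PBWGuttDrinfeldBCHWhoElseStarProduct} into the completion, using nilpotency in two places. First, since $\lie{g}$ is nilpotent, the BCH series is a finite sum of iterated brackets, so $\zeta := \frac{1}{z}\bch{z\xi}{z\eta}$ is an honest element of $\lie{g}$, and by \refitem{item:NilpotentHasExp} its exponential $\exp(\zeta)$ belongs to $\widehat{\Sym}_{1^-}^{\bullet}(\lie{g})$. Second, the left-hand side $\exp(\xi) \ostar_z \exp(\eta)$ is defined thanks to the continuity of $\ostar_z$ on this completion, as given by Proposition~\ref{proposition:Nilpot:ProjLimit}. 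Both sides then lie in the Hausdorff completion, which by Proposition~\ref{proposition:Projections}~\refitem{item:CompletionExplicitly} is determined by its graded components; so I would project both sides via the continuous $\pi_j$ onto $\Sym^j(\lie{g})$. The $j$-th projection of $\exp(\xi) \ostar_z \exp(\eta)$ reduces, via Lemma~\ref{lemma:2MonomialsFormula1}, to a finite polynomial combination of the $C_n(\xi^a, \eta^b)$ with $a + b - n = j$, while the $j$-th projection of $\exp(\zeta)$ is the $\Sym^j$-part of $\sum_r \zeta^r/r!$ expanded through the BCH series. Equality of these two finite polynomial expressions in $\xi, \eta$ is exactly the content of the formal identity of Proposition~\ref{proposition:PBWGuttDrinfeldBCHWhoElseStarProduct} at degree $j$, after substitution of the free variables by $\xi, \eta \in \lie{g}$.

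Part \refitem{item:NipotentOneParameterGroups} will then be an immediate specialization of \refitem{item:NilpotentExpGivesBCH}: since $[\xi, \xi] = 0$, every BCH term containing at least one bracket vanishes and $\bch{zt\xi}{zs\xi} = z(t + s)\xi$, so $\frac{1}{z} \bch{zt\xi}{zs\xi} = (t + s)\xi$, and the desired formula drops out. The main obstacle throughout is the transition in \refitem{item:NilpotentExpGivesBCH} from the formal power-series identity (proved in the appendix for free $\xi, \eta$) to an identity in the topological completion; the graded decomposition of $\widehat{\Sym}_{1^-}^{\bullet}(\lie{g})$ reduces this to a finite, polynomial verification on each $\Sym^j(\lie{g})$, where nilpotency ensures that $\zeta$ really lies in $\lie{g}$ so that $\exp(\zeta)$ is meaningful.
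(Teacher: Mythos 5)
Your proposal is correct and follows essentially the same route as the paper: part \refitem{item:NilpotentHasExp} by the direct seminorm estimate on the exponential series, part \refitem{item:NilpotentExpGivesBCH} by comparing graded components via the continuous projections $\pi_n$ (which reduces the identity to the formal statement of Proposition~\ref{proposition:PBWGuttDrinfeldBCHWhoElseStarProduct}), and part \refitem{item:NipotentOneParameterGroups} as a specialization. Your write-up is merely more explicit than the paper's three-sentence proof, usefully noting that nilpotency makes $\frac{1}{z}\bch{z\xi}{z\eta}$ an honest element of $\lie{g}$ and keeps each fixed-degree contribution a finite sum.
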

\begin{proof}
    For the first part, recall that the completion of the projective
    limit $1^-$ contains all those series $x = \sum_{n=0}^\infty x_n$
    with $x_n \in \Sym^n(\lie{g})$ such that
    \begin{equation*}
        \sum\limits_{n=0}^{\infty}
        p^n(x_n) n!^{1 - \epsilon}
        <
        \infty
    \end{equation*}
    for all continuous seminorms $p$ and all $\epsilon > 0$.  This is
    the case for the exponential series of $t \xi$ for $t \in
    \mathbb{K}$ and $\xi \in \lie{g}$. The second part follows from
    the fact that all the projections $\pi_n$ onto the homogeneous
    subspaces $\Sym_{\pi}^n$ are continuous. The third part is then a
    direct consequence of the second.
\end{proof}

%
%

\subsection{Functoriality and Representations}
\label{subsec:NilpotentFunctorialityRepresentations}

In the nilpotent case we get the same results for the extension of
maps from $\lie{g}$ into
associative AE-algebras and the same functorial properties in this case.
We just need to adapt Lemma~\ref{Lemma:LCAna:LemmaPreContinuityN}:
\begin{lemma}
    \label{Lemma:Nilpot:LemmaPreContinuityN}%
    Let $\lie{g}$ be locally convex nilpotent Lie algebra, $0 \leq R <
    1$ and $z \in \mathbb{C}$. Then for a continuous seminorm $p$ with
    an asymptotic estimate $p$, $n \in \mathbb{N}$ and all $\xi_1,
    \ldots, \xi_n \in \lie{g}$ the following estimate
    \begin{equation}
        \label{Nilpot:LemmaPreContinuityN}
        p_R \left(
            \xi_1 \star_z \cdots \star_z \xi_n
        \right)
        \leq
        c^n n!^{R + \epsilon}
        q^n(\xi_1 \tensor \cdots \tensor \xi_n)
    \end{equation}
    holds with $c = 16 \E^2 (|z| + 1)$ and $\epsilon = \frac{N-1}{N}(1
    - R)$.  The estimate is locally uniform in $z$.
\end{lemma}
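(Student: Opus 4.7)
The plan is to follow the template of Lemma~\ref{Lemma:LCAna:LemmaPreContinuityN} essentially verbatim, with the nilpotency of $\lie{g}$ entering as a truncation of the combinatorial sum over bracket multiplicities. I would start from the iterated star product expansion \eqref{Eq:Formulas:MultipleStars} combined with Lemma~\ref{Lemma:LCAna:NStarPrePreSubResult}, which writes $\xi_1 \star_z \cdots \star_z \xi_n$ as a sum over multi-indices $(i_1,\dots,i_{n-1})$ with $i_j\in\{0,\dots,j\}$ of terms of the shape $z^I B^*_{i_1}\cdots B^*_{i_{n-1}}\cdot(\text{binomials})\cdot [w_1]\cdots[w_{n-I}]$, where $I=i_1+\cdots+i_{n-1}$ and each $[w_j]$ is a left-nested Lie bracket in the $\xi_k$ with $|w_1|+\cdots+|w_{n-I}|=n$.

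\textbf{Nilpotency truncation.} The key new input is that, since $\lie{g}$ has nilpotency class $N$, any iterated bracket of length $>N$ vanishes, forcing $n\leq N(n-I)$, i.e.\ $I\leq n\delta$ with $\delta=(N-1)/N$, on nonvanishing summands. I would then run the same estimates as in Lemma~\ref{Lemma:LCAna:LemmaPreContinuityN}: $p_R$ on a tensor of degree $n-I$ yields the factor $(n-I)!^R$, the AE-property bounds $p([w_j])$ by a product of $|w_j|$ copies of $q$ so that the full bracket product is bounded by $q(\xi_1)\cdots q(\xi_n)$, the Bernoulli bound $|B^*_{i_j}|\leq i_j!$ combined with the binomials gives factors $\leq n^I$, and the number of admissible multi-indices with fixed $I$ is at most $2^{2n}$.

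\textbf{Trading factorials.} The only genuinely new step will be converting $(n-I)!^R$ into $n!^{R+\epsilon}$. Using $(n-I)!\,I!\leq n!$ I would first write $(n-I)!^R I! \leq n!^R I!^{1-R}$, and then apply the nilpotency cutoff $I\leq n\delta$ together with $m!\leq m^m$ and $m^m\leq \E^m m!$ to obtain
\begin{equation*}
    I!^{1-R}\leq (n\delta)!^{1-R}\leq (n\delta)^{n\delta(1-R)}\leq n^{n\epsilon}\leq \E^{n\epsilon}\,n!^{\epsilon},
\end{equation*}
with $\epsilon=\delta(1-R)$ — precisely the trick used in the proof of Proposition~\ref{proposition:Nilpot:ProjLimit}. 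Collecting all geometric-in-$n$ prefactors (Bernoulli and binomial bounds, the count $2^{2n}$, the factor $(|z|+1)^n$ from the powers of $z$, and the $\E^{n\epsilon}$ above) into a single constant of the required form $c=16\E^2(|z|+1)$ will yield \eqref{Nilpot:LemmaPreContinuityN} on factorizing tensors; the infimum argument used throughout the paper extends this to arbitrary $\xi_1\tensor\cdots\tensor\xi_n$, and local uniformity in $z$ is automatic since $c$ is continuous in $|z|$.

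\textbf{Main obstacle.} The only real difficulty is combinatorial bookkeeping: one must carefully track which factor of $n!$ originates from which source — the tensor degree $n-I$, the Bernoulli coefficients, the permutation count, and the binomials — so that the nilpotency cutoff $I\leq n\delta$ is applied to the correct $I!$ and everything combines into $n!^{R+\epsilon}$ with exactly the claimed constant $16\E^2(|z|+1)^n$. Once this bookkeeping is set up, every individual inequality used is elementary.
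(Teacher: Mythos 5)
Your proposal follows the paper's own proof essentially step for step: the paper likewise starts from the intermediate estimate in Lemma~\ref{Lemma:LCAna:LemmaPreContinuityN} (built on Lemma~\ref{Lemma:LCAna:NStarPrePreSubResult}), truncates the sum over the total bracket number $\ell$ to $\ell \leq \delta n$ by nilpotency, and converts $\ell!^{1-R}$ into $\E^{n\epsilon} n!^{\epsilon}$ via exactly the chain $\ell!^{1-R}\leq (\delta n)^{\delta n(1-R)}\leq n^{n\epsilon}\leq \E^{n\epsilon}n!^{\epsilon}$ before absorbing all geometric factors into $c=16\E^2(|z|+1)$. The proposal is correct and takes the same route, so there is nothing of substance to add.
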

\begin{proof}
    Take $R < 1$ and consider the estimate
    \eqref{LCAna:PreContinuityIntermediateN} in the proof of
    Lemma~\ref{Lemma:LCAna:LemmaPreContinuityN}.  We know that, since
    we can have at most $N$ brackets, also the values for $\ell$ are
    restricted to
    \begin{equation*}
        \ell
        \leq
        \frac{N-1}{N} n
        =
        \delta n.
    \end{equation*}
    Using the same steps of the proof of
    Lemma~\ref{Lemma:LCAna:LemmaPreContinuityN}
    we get
    \begin{align*}
        p_R \left(
            \xi_1 \star_z \cdots \star_z \xi_n
        \right)
        & \leq
        \sum\limits_{\ell = 0}^{\delta n}
        (n - \ell)!^R
        \sum\limits_{\substack{
			1 \leq j \leq n-1 \\
			i_j \in \{0, \ldots, j\} \\
			\sum_{j = 1}^{n - 1} i_j = \ell
		}}
        |z|^{\ell} n^{\ell}
        q(\xi_1) \cdots q(\xi_n)
        \\
        & \leq
        \sum\limits_{\ell = 0}^{\delta n}
        (n - \ell)!^R
        \sum\limits_{\substack{
			1 \leq j \leq n-1 \\
			i_j \in \{0, \ldots, j\} \\
			\sum_{j = 1}^{n - 1} i_j = \ell
		}}
        |z|^{\ell} (2 \E)^n \ell!
        q(\xi_1) \cdots q(\xi_n)
        \\
        & \leq
        (2 \E)^n (|z| + 1)^n
        q(\xi_1) \cdots q(\xi_n)
        \sum\limits_{\ell = 0}^{\delta n}
        (n - \ell)!^R \ell!
        \binom{n + \ell - 2}{\ell - 1}.
    \end{align*}
    We have the inequality
    \begin{equation*}
        \ell!
        =
        \ell!^R
        \ell!^{1-R}
        \leq
        \ell!^R
        \left(
            (\delta n)^{\delta n}
        \right)^{1-R}
        \leq
        \ell!^R
        n^{\delta n (1-R)}
        \leq
        \ell!^R
        n!^{\delta (1 - R)}
        \E^{\delta n (1 - R)}
    \end{equation*}
    and together with $\ell!^R (n - \ell)!^R \leq n!^R$ this gives
    \begin{align*}
        p_R \left(
            \xi_1 \star_z \cdots \star_z \xi_n
        \right)
        & \leq
        (2 \E)^n (|z| + 1)^n
        n!^R n!^{\delta (1 - R)}
        q(\xi_1) \cdots q(\xi_n)
        \sum\limits_{\ell = 0}^{\delta n}
        \binom{n + \ell - 2}{\ell - 1}
        e^{\delta n (1 - R)}
        \\
        & \leq
        (2 \E)^n (|z| + 1)^n
        \left(\E^{(1-R) \delta}\right)^n
        4^n n!^{R + \epsilon}
        q(\xi_1) \cdots q(\xi_n),
    \end{align*}
    with $\epsilon = \delta (1-R)$. It is clear that for all $R < 1$
    we have $R + \epsilon < 1$. To complete the proof we set
    \begin{equation*}
    	c
    	=
    	8 \E (|z|+1) \E^{(1-R)\delta}
    	\leq
    	16 \E^2 (|z|+1),
    \end{equation*}
    and notice that the estimate is locally uniform in $z$, even
    though it is not uniform in $z$.
\end{proof}
\begin{proposition}
    \label{proposition:FunctorialityNilpotent}%
    Let $\lie{g}$ be a nilpotent locally convex Lie algebra with
    continuous Lie bracket. Then the statements of
    Proposition~\ref{Prop:LCAna:Semi-functoriality},
    Corollary~\ref{Coro:LCAna:ContinuousRepresentations}, and
    Proposition~\ref{proposition:Functoriality} hold for the
    projective limit $\Sym_{1^-}^{\bullet}(\lie{g})$, too.
\end{proposition}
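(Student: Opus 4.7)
The plan is to reproduce the proofs of Proposition~\ref{Prop:LCAna:Semi-functoriality}, Corollary~\ref{Coro:LCAna:ContinuousRepresentations}, and Proposition~\ref{proposition:Functoriality} essentially verbatim, with Lemma~\ref{Lemma:Nilpot:LemmaPreContinuityN} substituted for Lemma~\ref{Lemma:LCAna:LemmaPreContinuityN} wherever the iterated star product $\xi_1 \star_z \cdots \star_z \xi_n$ needs to be controlled, and then to verify that the resulting estimates descend to the projective limit.

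For the analog of Proposition~\ref{Prop:LCAna:Semi-functoriality}, I observe that its original proof never invoked any Gutt star product estimate: after setting $\Psi_z = \widetilde{\Phi}_z \circ \Symmetrizer$, the continuity of $\phi_z$ together with the AE property of $\algebra{A}$ already yielded
\begin{equation*}
    p(\Psi_z(\xi_1 \tensor \cdots \tensor \xi_n))
    \leq
    r(\xi_1) \cdots r(\xi_n)
    \leq
    r_R(\xi_1 \tensor \cdots \tensor \xi_n)
\end{equation*}
for every $R \geq 0$. In particular the bound holds uniformly for all $R \in [0,1)$, so after the standard infimum argument and composition with the continuous projections $\widehat{\Sym}_{1^-}^{\bullet}(\lie{g}) \to \widehat{\Sym}_R^{\bullet}(\lie{g})$, the continuity of $\Psi_z$ (and hence of $\Phi_z$ and $\widetilde{\Phi}_z$) on the projective limit follows at once. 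Corollary~\ref{Coro:LCAna:ContinuousRepresentations} is then an immediate consequence, exactly as before, since $\Bounded(V)$ is a Banach algebra and thus AE.

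The real content is therefore the functoriality statement. Assume that both $\lie{g}$ and $\lie{h}$ are nilpotent, with $\lie{h}$ satisfying $M+1$ iterated brackets vanishing, and let $\phi \colon \lie{g} \longrightarrow \lie{h}$ be a continuous Lie algebra homomorphism. Fix a continuous seminorm $p$ on $\lie{h}$ with asymptotic estimate $q$ and a continuous seminorm $r$ on $\lie{g}$ satisfying $q(\phi(\xi)) \leq r(\xi)$. For $R \in [0,1)$, define $\Psi_z = \widetilde{\Phi}_z \circ \Symmetrizer \colon \Tensor_R^{\bullet}(\lie{g}) \longrightarrow \Sym_R^{\bullet}(\lie{h})$, so that $\Psi_z(\xi_1 \tensor \cdots \tensor \xi_n) = \phi(\xi_1) \star_z \cdots \star_z \phi(\xi_n)$. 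Applying Lemma~\ref{Lemma:Nilpot:LemmaPreContinuityN} inside $\lie{h}$ with $\epsilon = \frac{M-1}{M}(1-R)$ gives
\begin{equation*}
    p_R(\Psi_z(\xi_1 \tensor \cdots \tensor \xi_n))
    \leq
    c^n n!^{R+\epsilon}
    q(\phi(\xi_1)) \cdots q(\phi(\xi_n))
    \leq
    (c r)_{R+\epsilon}(\xi_1 \tensor \cdots \tensor \xi_n),
\end{equation*}
and the usual infimum argument extends this to arbitrary tensors, exhibiting $\Phi_z$ as a continuous map $\Sym_{R+\epsilon}^{\bullet}(\lie{g}) \longrightarrow \Sym_R^{\bullet}(\lie{h})$.

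The final step is projective limit bookkeeping, which is where all the delicacy sits but turns out to be trivial: since $R + \epsilon = (R + M - 1)/M < 1$ whenever $R < 1$, fixing any $\epsilon' > 0$ and setting $R = 1 - \epsilon'$ yields $R + \epsilon = 1 - \epsilon'/M$. Composing with the continuous projection $\widehat{\Sym}_{1^-}^{\bullet}(\lie{g}) \to \widehat{\Sym}_{1-\epsilon'/M}^{\bullet}(\lie{g})$ then gives continuity of $\Phi_z$ into $\widehat{\Sym}_{1-\epsilon'}^{\bullet}(\lie{h})$ for every $\epsilon' > 0$, and the universal property of the projective limit yields the desired continuous unital algebra homomorphism $\Phi_z \colon \widehat{\Sym}_{1^-}^{\bullet}(\lie{g}) \longrightarrow \widehat{\Sym}_{1^-}^{\bullet}(\lie{h})$ for every $z \in \mathbb{K}$. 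I do not anticipate any genuine obstacle: the analytic content is entirely encapsulated in Lemma~\ref{Lemma:Nilpot:LemmaPreContinuityN}, and the only point requiring care is the verification that the index shift $R \mapsto R + \epsilon$ stays strictly below $1$, which is automatic from finiteness of the nilpotency index $M$.
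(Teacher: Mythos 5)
Your proposal is correct and follows exactly the route the paper intends: the paper states Proposition~\ref{proposition:FunctorialityNilpotent} without a separate proof precisely because the arguments of Proposition~\ref{Prop:LCAna:Semi-functoriality}, Corollary~\ref{Coro:LCAna:ContinuousRepresentations}, and Proposition~\ref{proposition:Functoriality} carry over once Lemma~\ref{Lemma:Nilpot:LemmaPreContinuityN} replaces Lemma~\ref{Lemma:LCAna:LemmaPreContinuityN}, which is what you do. Your explicit check that $R + \epsilon = 1 - \epsilon'/M < 1$ and the projective-limit bookkeeping are the (routine but worth recording) details the paper leaves implicit.
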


%
%

\subsection{The Link to the Weyl Algebra}
\label{subsec:LinkToWeylAlgebra}

In this section we aim to discuss the link to the Weyl algebra from
\cite{waldmann:2014a}. For simplicity, we consider the easiest case of
the Weyl algebra with two generators. Recall that the Weyl algebra is
a quotient of the enveloping algebra of the Heisenberg algebra
$\lie{h}$ which one gets from dividing out its center. So let $c \in
\mathbb{K}$ and we have a projection
\begin{equation}
    \label{Nilpot:WeylProjection}
    \pi \colon
    \Sym_R^\bullet(\lie{h})
    \longrightarrow
    \mathcal{W}_R(\lie{h})
    =
    \left(
    	\frac{\Sym_R^\bullet(\lie{h})}
    	{\langle E - c \Unit \rangle}
    \right)
\end{equation}
\begin{proposition}
    \label{proposition:ProjectionWeylContinuous}%
    The projection $\pi$ is continuous for $R \geq 0$.
\end{proposition}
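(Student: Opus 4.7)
The plan is to recognize $\pi$ as the unique unital algebra homomorphism $\Sym^\bullet(\lie{h}) \to \Sym^\bullet(W)$ induced by the linear map $\phi\colon \lie{h} \to \Sym^{\le 1}(W)$ with $\phi(P) = P$, $\phi(Q) = Q$, $\phi(E) = c\Unit$, where $W = \mathrm{span}\{P,Q\}$ and $\mathcal{W}_R(\lie{h})$ carries the $\Sym_R$-topology on $\Sym^\bullet(W)$ from \cite{waldmann:2014a}. The crucial observation is that $\phi$ is \emph{not} degree-preserving: $\phi(E)$ sits in $\Sym^0(W)$ while $\phi(P),\phi(Q)$ sit in $\Sym^1(W)$, so the induced map sends $\Sym^n(\lie{h})$ into $\bigoplus_{m=0}^n \Sym^m(W)$. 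This degree-lowering behaviour is the mechanism that will make the $\Sym_R$-weights work out.

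Given a continuous seminorm $p$ on $W$, I would extend it to a continuous seminorm $\tilde{p}$ on $\lie{h}$ by
\[
\tilde{p}(aP + bQ + c'E) := p(aP+bQ) + \max(1,|c|)\,|c'|,
\]
chosen so that $p_R(\phi(\xi)) \le \tilde{p}(\xi)$ for all $\xi \in \lie{h}$. For a factorizing tensor $\xi_1 \tensor \cdots \tensor \xi_n \in \Sym^n(\lie{h})$, I would split each $\phi(\xi_i) = \phi(\xi_i)_0 + \phi(\xi_i)_1$ into its degree-$0$ and degree-$1$ parts, multiply out the product $\prod_{i=1}^n \phi(\xi_i)$ in $\Sym^\bullet(W)$, and estimate the resulting degree-$m$ components to obtain
\[
p_R\bigl(\pi(\xi_1 \cdots \xi_n)\bigr) \le \sum_{m=0}^n m!^R \sum_{|S|=m} \prod_{i \notin S}|\phi(\xi_i)_0| \prod_{i \in S} p(\phi(\xi_i)_1) \le n!^R \prod_{i=1}^n \tilde{p}(\xi_i),
\]
where the second inequality uses the essential bound $m!^R \le n!^R$, valid because $m \le n$ and $R \ge 0$.

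To pass from factorizing tensors to arbitrary $x_n \in \Sym^n(\lie{h})$, I would apply the standard infimum-over-representations argument used at the end of the proof of Proposition~\ref{Prop:LCAna:Continuity1}: writing $x_n$ as a sum of factorizing tensors and passing to the infimum yields $p_R(\pi(x_n)) \le n!^R \tilde{p}^n(x_n) = \tilde{p}_R(x_n)$. Summing over $n$ and using subadditivity of $p_R$ then gives $p_R(\pi(x)) \le \tilde{p}_R(x)$ on all of $\Sym_R^\bullet(\lie{h})$, which is precisely the required continuity estimate.

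The only conceptually nontrivial step is the degree-lowering observation; no control of nested brackets, no BCH estimates, and no AE-property is needed, which explains why the statement holds for \emph{all} $R \ge 0$, in sharp contrast to the continuity of $\star_z$ itself (which required $R \ge 1$). The only minor technical obstacle is to be careful that the extension $\tilde{p}$ of $p$ really dominates the seminorm $p_R\circ\phi$ pulled back from $\Sym^\bullet(W)$, but this is immediate from the explicit formula above since $p_R$ on a degree-$\le 1$ element is just the sum of its two homogeneous components.
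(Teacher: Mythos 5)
Your proof is correct and follows essentially the same mechanism as the paper's: the projection only lowers the symmetric degree, so the factorial weights satisfy $m!^R \leq n!^R$ for $R \geq 0$, the powers of $|c|$ are absorbed into a rescaled seminorm, and the infimum argument finishes the job. The only (harmless) difference is that the paper establishes the estimate on the ordered basis monomials $Q^{\tensor k} \tensor P^{\tensor \ell} \tensor E^{\tensor m}$ with the seminorm $((|c|+1)p)_R$, whereas you expand $\pi$ on arbitrary factorizing tensors via the subset decomposition and the modified seminorm $\tilde{p}$ --- which, if anything, makes the passage to general tensors by the infimum argument slightly more transparent.
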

\begin{proof}
    We extend $\pi$ to the whole tensor algebra by symmetrizing
    beforehand. Let then $p$ be a continuous seminorm on $\lie{h}$, $k,
    \ell, m \in \mathbb{N}_0$. We have
    \begin{align*}
        p_R(\pi (
        	Q^{\tensor k} \tensor
        	P^{\tensor \ell} \tensor
        	E^{\tensor m}
        ) )
        & =
        p_R( Q^k P^{\ell} c^m )
        \\
        & =
        |c|^m (k + \ell)!^R
        p^{k + \ell}(Q^k P^{\ell})
        \\
        & \leq
        (|c| + 1)^{k + \ell + m}
        (k + \ell + m)!^R
        p(Q)^k p(P)^{\ell} p(E)^m
        \\
        & =
        ((|c| + 1) p)_R
        (Q^{\tensor k} \tensor
        P^{\tensor \ell} \tensor
        E^{\tensor m}).
    \end{align*}
    Then we do the usual infimum argument and have the result on
    arbitrary tensors again.
\end{proof}

To establish the link to the continuity results of the Weyl algebra,
we need $\pi \circ \ostar_z$ to be continuous for $R \geq \frac
1 2$.
\begin{proposition}
    \label{proposition:ContinuousProductInWeyl}%
    Let $R \geq \frac{1}{2}$ and $\pi$ the projection from Equation
    \eqref{Nilpot:WeylProjection}. Then the map $\pi \circ
    \ostar_z$ is continuous.
\end{proposition}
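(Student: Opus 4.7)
The plan is to exploit two features of the Heisenberg setting that together yield the improvement from $R \geq 1$ to $R \geq \tfrac{1}{2}$. First, since $\lie{h}$ is $2$-step nilpotent, the BCH series truncates to
\[
\bch{\xi}{\eta} = \xi + \eta + \tfrac{1}{2}[\xi,\eta],
\]
so in the polarised formula of Lemma~\ref{lemma:2MonomialsFormula1} only the building blocks $\widetilde{\mathrm{BCH}}_{1,0}$, $\widetilde{\mathrm{BCH}}_{0,1}$ and $\widetilde{\mathrm{BCH}}_{1,1}$ can appear, and a term contributing $n$ brackets must use exactly $n$ copies of $\widetilde{\mathrm{BCH}}_{1,1}$. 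Second, $[\xi,\eta]\in\mathbb{K}E$ lies in the centre and $\pi$ sends $E$ to the scalar $c\Unit$, so $\pi([\xi,\eta])$ is a \emph{scalar} depending bilinearly and continuously on $\xi,\eta$. Thus, continuity of the bracket and of $\pi$ from Proposition~\ref{proposition:ProjectionWeylContinuous} jointly provide an asymptotic estimate $q$ of $p$ with both $|\pi([\xi,\eta])| \leq |c|\,q(\xi)q(\eta)$ and $p(\pi(\xi)) \leq q(\xi)$.

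Extending $\pi\circ\ostar_z$ to the tensor algebra via the symmetrisation map, I would first work on factorising tensors $x = \xi_1 \tensor \cdots \tensor \xi_k$, $y = \eta_1 \tensor \cdots \tensor \eta_\ell$. Expanding Lemma~\ref{lemma:2MonomialsFormula1} in the truncated BCH situation, the image
\[
\pi\bigl(C_n(\Symmetrizer(x),\Symmetrizer(y))\bigr)
=
\sum_{A,B,\beta}
\biggl(\prod_{(i,j)\in\beta} \tfrac{1}{2}\pi([\xi_i,\eta_j])\biggr)
\prod_{i \notin A}\pi(\xi_i)
\prod_{j \notin B}\pi(\eta_j),
\]
where the sum runs over subsets $A \subseteq \{1,\ldots,k\}$, $B \subseteq \{1,\ldots,\ell\}$ of cardinality $n$ and bijections $\beta \colon A \to B$, is a polynomial in the Weyl algebra of degree $k + \ell - 2n$, not $k + \ell - n$. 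This doubled degree drop per bracket is the critical gain. Applying $p^{k + \ell - 2n}$ and the bilinear bracket estimate yields
\[
p^{k + \ell - 2n}\bigl(\pi(C_n(x,y))\bigr)
\leq
\binom{k}{n}\binom{\ell}{n} n!
\left(\tfrac{|c|}{2}\right)^n
q(\xi_1)\cdots q(\xi_k)\,q(\eta_1)\cdots q(\eta_\ell),
\]
and hence
\[
p_R\bigl(\pi(C_n(x,y))\bigr)
\leq
(k+\ell-2n)!^R \binom{k}{n}\binom{\ell}{n} n!
\left(\tfrac{|c|}{2}\right)^n
q(\xi_1)\cdots q(\eta_\ell).
\]

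The core of the argument is then a factorial inequality. In the critical case $R = \tfrac{1}{2}$, after multiplying by $|z|^n$ and summing over $n = 0, \ldots, \min(k,\ell)$, the problem reduces to bounding
\[
\sum_{n=0}^{\min(k,\ell)}
\frac{(k+\ell-2n)!^{1/2} k!^{1/2}\ell!^{1/2}}{n!\,(k-n)!(\ell-n)!}
\left(\tfrac{|z||c|}{2}\right)^n
\leq \tilde{c}^{\,k+\ell}.
\]
This follows from the elementary inequalities $(k+\ell-2n)! \leq 2^{k+\ell-2n}(k-n)!(\ell-n)!$, $k!/(k-n)! \leq k^n$, $\sqrt{k\ell} \leq (k+\ell)/2$, and $\sum_{n\geq 0}x^n/n! = \E^x$, which together collapse the sum to the bound $(\sqrt{2}\,\E^{|z||c|/8})^{k+\ell}$. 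For $R > \tfrac{1}{2}$ the extra factor $(k+\ell-2n)!^{R-1/2}$ is absorbed via $(k+\ell)! \leq 2^{k+\ell} k!\,\ell!$ into a larger constant $\tilde{c}$ depending on $R$. This gives $p_R(\pi(x \ostar_z y)) \leq (\tilde{c}q)_R(x)\,(\tilde{c}q)_R(y)$ on factorising tensors; the infimum argument used in the proof of Proposition~\ref{Prop:LCAna:Continuity1} extends the estimate to all of $\Tensor_R^\bullet(\lie{h}) \tensor \Tensor_R^\bullet(\lie{h})$ and then to the completion.

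The main obstacle is the combinatorial bookkeeping: verifying that the multinomial prefactor from Lemma~\ref{lemma:2MonomialsFormula1}, together with the $1/(k+\ell-n)!$ and the double sum over $(\sigma,\tau) \in S_k \times S_\ell$, collapses—via the permutational redundancy created by commutativity of the central brackets—exactly to $\binom{k}{n}\binom{\ell}{n} n!$. Once this identification is in place, the factorial inequality selecting $R \geq \tfrac{1}{2}$ is elementary, and it makes transparent the Moyal-like nature of the product that the projection induces on the Weyl algebra.
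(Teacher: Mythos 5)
Your proof is essentially correct and rests on exactly the same mechanism as the paper's: after projecting $E$ to the scalar $c\Unit$, each bracket costs \emph{two} degrees instead of one, so the factorial weight becomes $(k+\ell-2n)!^R$, and the resulting inequality only needs $R \geq \tfrac12$ (in the paper this enters as $n! \leq (2n)!^{R}$ inside a binomial rearrangement; in your version as $\sqrt{k\ell}^{\,n}/n!$ summing to an exponential). Where you genuinely diverge is in the bookkeeping: the paper never writes a closed formula for $C_n$ on general tensors, but instead fixes the norm with $p(Q)=p(P)=p(E)$, dominates an arbitrary monomial product by the extremal case $C_n(Q^r,P^s)$ with $r,s$ the total degrees, and computes that single coefficient explicitly. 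You instead derive the Moyal-type expansion of $C_n$ over subsets $A,B$ and bijections $\beta$, which is correct for the $2$-step nilpotent case (the $\tfrac1{(k+\ell-n)!}\sum_{\sigma,\tau}$ of Lemma~\ref{lemma:2MonomialsFormula1} does collapse to coefficient $1$ per matching, as one checks by counting $n!(k-n)!(\ell-n)!$ pairs $(\sigma,\tau)$ per type-ordering), and it makes the structural reason for the gain much more transparent. One technical point needs repair, though: for general $\xi_i \in \lie{h}$ the image $\pi(\xi_i)$ has a constant component whenever $\xi_i$ has an $E$-part, so $\pi(C_n(x,y))$ is \emph{not} homogeneous of degree $k+\ell-2n$ and applying $p^{k+\ell-2n}$ as you do discards the lower-degree contributions. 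The fix is the same reduction the paper uses: work with the basis letters $Q,P,E$ (the $\ell^1$-norm makes the infimum argument compatible with this), for which the non-bracketed letters either stay in degree one or drop to a scalar, and every such drop only decreases the factorial weight $(\cdot)!^R$, so your estimate survives. Up to this repair and a harmless constant ($\E^{|z||c|/4}$ rather than $\E^{|z||c|/8}$), the argument is sound.
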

\begin{proof}
    Since we are in finite dimensions, we can choose a
    submultiplicative norm $p$ with $p(Q) = p(P) = p(E)$ without
    restriction. Moreover, let $k, k', \ell, \ell', m, m' \in
    \mathbb{N}_0$. Then we have to get an estimate for $p_R \left(
        \pi\left( Q^k P^{\ell} E^m \ostar_z Q^{k'} P^{\ell'} E^{m'}
        \right) \right)$.  If we calculate the star product
    explicitly, we see, that we only get Lie brackets where we have
    $P$'s and $Q$'s. Let $r = k + \ell + m$ and $s = k' + \ell' + m'$,
    then we can actually simplify the calculations by
    \begin{align*}
        p_R \left(
        \pi(Q^k P^{\ell} E^m
        \ostar_z    Q^{k'} P^{\ell'} E^{m'}
        ) \right)
        & =
        (p_R \circ \pi) \left(
        \sum\limits_{n=0}^{r + s - 1}
        z^n C_n(Q^k P^{\ell} E^m,
        Q^{k'} P^{\ell'} E^{m'})
        \right)
        \\
        & \leq
        \sum\limits_{n=0}^{r + s - 1}
        |z|^n
        (p_R \circ \pi) \left(
        C_n(Q^k P^{\ell} E^m,
        Q^{k'} P^{\ell'} E^{m'})
        \right)
        \\
        & \leq
        \sum\limits_{n=0}^{r + s - 1}
        |z|^n
        (p_R \circ \pi) \left(
        C_n(Q^r, P^s)
        \right)
        \\
        & =
        \sum\limits_{n=0}^{r + s - 1}
        |z|^n
        \frac{r! s!}{(r-n)! (s-n)! n!}
        (p_R \circ \pi) \left(
        Q^{r-n} P^{s-n} E^n
        \right)
        \\
        & =
        \sum\limits_{n=0}^{r + s - 1}
        |z|^n |c|^n
        \frac{r! s!}{(r-n)! (s-n)! n!}
        p_R \left(
        Q^{r-n} P^{s-n}
        \right)
        \\
        & \leq
        \sum\limits_{n=0}^{r + s - 1}
        |z|^n |c|^n
        \frac{r! s!}{(r-n)! (s-n)! n!}
        \frac{(r + s - 2n)!^R}{r!^R s!^R}
        p_R \left(Q^{\tensor r} \right)
        p_R \left(P^{\tensor s} \right)
        \\
        & \leq
        \sum\limits_{n=0}^{r + s - 1}
        |z|^n |c|^n
        \binom{r}{n} \binom{s}{n}
        \frac{(r + s - 2n)!^R n!}{r!^R s!^R}
        p_R \left(Q^{\tensor r} \right)
        p_R \left(P^{\tensor s} \right)
        \\
        & \leq
        \sum\limits_{n=0}^{r + s - 1}
        |z|^n |c|^n
        \binom{r}{n} \binom{s}{n}
        \frac{(r + s - 2n)!^R n!}{r!^R s!^R}
        p_R \left(Q^{\tensor r} \right)
        p_R \left(P^{\tensor s} \right)
        \\
        & \ot{(a)}{\leq}
        \sum\limits_{n=0}^{r + s - 1}
        |z|^n |c|^n
        \binom{r}{n} \binom{s}{n}
        \binom{r + s}{s}^R
        \binom{r + s}{2n}^{-R}
        p_R \left(Q^{\tensor r} \right)
        p_R \left(P^{\tensor s} \right)
        \\
        & \leq
        \sum\limits_{n=0}^{r + s - 1}
        (|z| + 1)^n (|c| + 1)^n
        4^{r + s}
        p_R \left(Q^{\tensor r} \right)
        p_R \left(P^{\tensor s} \right)
        \\
        & \ot{(b)}{ \leq }
        \underbrace{
        (8 (|z| + 1) (|c| + 1))^{r + s}
        }_{ = \tilde{c}^{r + s}}
        p_R \left(Q^{\tensor r} \right)
        p_R \left(P^{\tensor s} \right)
        \\
        & =
        (\tilde{c} p)_R \left(Q^{\tensor r} \right)
        (\tilde{c} p)_R \left(P^{\tensor s} \right)
        \\
        & \ot{(c)}{ = }
        (\tilde{c} p)_R \left(
        Q^{\tensor k} \tensor
        P^{\tensor \ell} \tensor
        E^{\tensor m} \right)
        (\tilde{c} p)_R \left(
        Q^{\tensor k'} \tensor
        P^{\tensor \ell'} \tensor
        E^{\tensor m'} \right),
    \end{align*}
    where in we rearranged the factorials in ($a$) and used $R \geq
    \frac{1}{2}$. The estimates ($b$) are the standard binomial
    coefficient estimates. In ($c$) we used $p(Q) = p(P) = p(E)$. Now
    we just use
    \begin{equation*}
        \left(
        	Q^{\tensor k} \tensor
        	P^{\tensor \ell} \tensor
        	E^{\tensor m}
        \right)
        \ostar_z
        \left(
        	Q^{\tensor k'} \tensor
        	P^{\tensor \ell'} \tensor
        	E^{\tensor m'}
        \right)
        =
        Q^k P^{\ell} E^m
        \ostar_z
        Q^{k'} P^{\ell'} E^{m'}
    \end{equation*}
    and the
    infimum argument to expand this estimate to all tensors.
    This concludes the proof.
\end{proof}

%
%

\section{The Hopf Algebra Structure}
\label{sec:Hopf}

In a last step we investigate the continuity of the Hopf algebra
structure maps on $\algebra{U}_R(\lie{g}_z)$. Pulling back the
coproduct $\Delta_z$ and the antipode $S_z$ from
$\algebra{U}_R(\lie{g}_z)$ to $\Sym^\bullet(\lie{g})$ we get a
coproduct $\Delta$ and an antipode $S$ with respect to $\star_z$.  It
is well-known that $\Delta$ and $S$ are independent of $z$ and
coincide with the classical shuffle coproduct and antipode
which on the symmetric algebra can be written as follows:
\begin{lemma}
    \label{Thm:Hopf:Formulas}%
    For $\xi_1, \ldots, \xi_n \in \lie{g}$ we have the identities
    \begin{equation}
        \label{Hopf:AntipodeFormula}
        S(\xi_1 \tensor \cdots \tensor \xi_n)
        =
        (-1)^n
        \xi_1 \cdots \xi_n
    \end{equation}
    and
    \begin{equation}
        \label{Hopf:CoproductFormula}
        \ocoproduct(\xi_1 \tensor \cdots \tensor \xi_n)
        =
        \sum\limits_{
        	I \subseteq
        	\{1, \ldots, n\}
        }
        \xi_I
        \tensor
        \xi_1 \cdots
        \widehat{\xi_I}
        \cdots \xi_n
    \end{equation}
    where $\xi_I$ denotes the symmetric tensor product of all $\xi_i$
    with $i \in I$ and $\widehat{\xi_I}$ means that the $\xi_i$ with
    $i \in I$ are left out.
\end{lemma}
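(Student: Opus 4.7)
The plan is to pull the coproduct and antipode back along the PBW map $\mathfrak{q}_z \colon \Sym^\bullet(\lie{g}) \longrightarrow \algebra{U}(\lie{g}_z)$ and then evaluate the resulting maps on the generators $\xi_1 \cdots \xi_n$ by a direct combinatorial computation. To set the stage, I would first recall that on $\algebra{U}(\lie{g}_z)$ the coproduct $\ocoproduct_z$ is the unique algebra homomorphism (with respect to $\odot_z$) satisfying $\ocoproduct_z(\xi) = \xi \tensor \Unit + \Unit \tensor \xi$ for $\xi \in \lie{g}$, and $S_z$ is the unique algebra anti-homomorphism with $S_z(\xi) = -\xi$. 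Since $\mathfrak{q}_z$ restricts to the identity on $\lie{g} = \Sym^1(\lie{g})$, the pulled-back maps $\ocoproduct$ and $S$ on $\Sym^\bullet(\lie{g})$ agree on $\lie{g}$ with these classical formulas; by uniqueness of the extension to the Hopf algebra generated by $\lie{g}$ they are independent of $z$, justifying the statement of the lemma.

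For the antipode, I would insert the explicit PBW formula $\mathfrak{q}_z(\xi_1 \cdots \xi_n) = \frac{1}{n!} \sum_{\sigma \in S_n} \xi_{\sigma(1)} \odot \cdots \odot \xi_{\sigma(n)}$ and apply $S_z$. Its anti-homomorphism property yields
\begin{equation*}
    S_z \bigl( \mathfrak{q}_z(\xi_1 \cdots \xi_n) \bigr)
    =
    \frac{(-1)^n}{n!}
    \sum_{\sigma \in S_n}
    \xi_{\sigma(n)} \odot \cdots \odot \xi_{\sigma(1)},
\end{equation*}
and the substitution $\tau(i) = \sigma(n+1-i)$ turns the right-hand side into $(-1)^n \mathfrak{q}_z(\xi_1 \cdots \xi_n)$. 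Applying $\mathfrak{q}_z^{-1}$ gives \eqref{Hopf:AntipodeFormula}.

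For the coproduct, the homomorphism property produces
\begin{equation*}
    \ocoproduct_z (\xi_{\sigma(1)} \odot \cdots \odot \xi_{\sigma(n)})
    =
    \prod_{i=1}^n (\xi_{\sigma(i)} \tensor \Unit + \Unit \tensor \xi_{\sigma(i)})
    =
    \sum_{I} \xi_{\sigma(i_1)} \odot \cdots \odot \xi_{\sigma(i_k)}
    \tensor
    \xi_{\sigma(j_1)} \odot \cdots \odot \xi_{\sigma(j_{n-k})},
\end{equation*}
where $I = \{i_1 < \cdots < i_k\}$ ranges over subsets of $\{1, \ldots, n\}$ and $\{j_1 < \cdots < j_{n-k}\}$ is its complement. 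Averaging over $\sigma \in S_n$ and swapping the order of summation, I would parametrize each $\sigma$ by the image $A = \sigma(I)$ together with the two bijections $I \to A$ and $I^c \to A^c$. The inner double sum then factors as $k! (n-k)! \, \mathfrak{q}_z(\xi_A) \tensor \mathfrak{q}_z(\xi_{A^c})$, and combining with the $\binom{n}{k}$ subsets $I$ of size $k$ and the prefactor $1/n!$ the combinatorial factors collapse to $\sum_A \mathfrak{q}_z(\xi_A) \tensor \mathfrak{q}_z(\xi_{A^c})$ summed over all subsets $A \subseteq \{1, \ldots, n\}$. Applying $\mathfrak{q}_z^{-1} \tensor \mathfrak{q}_z^{-1}$ yields \eqref{Hopf:CoproductFormula}.

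The only non-trivial point is the bookkeeping in the coproduct calculation, namely the re-parametrization of $\sigma$ by a triple (image subset, pair of bijections) and the cancellation of the binomial and factorial factors; everything else is either the definition of $\mathfrak{q}_z$ or the basic Hopf algebra axioms of $\algebra{U}(\lie{g}_z)$.
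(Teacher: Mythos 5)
Your proof is correct. Note first that the paper itself gives no proof of this lemma: it is stated as well-known folklore (the coproduct and antipode pulled back along $\mathfrak{q}_z$ are the classical unshuffle coproduct and antipode of the symmetric algebra), so there is nothing to compare against; your argument fills that gap with the standard computation. Both halves check out: for the antipode, the anti-homomorphism property of $S_z$ reverses the $\odot$-factors and produces $(-1)^n$, and the relabelling $\tau(i) = \sigma(n+1-i)$ shows the symmetrized sum is invariant under reversal, so $S_z \circ \mathfrak{q}_z = (-1)^n\, \mathfrak{q}_z$ on $\Sym^n(\lie{g})$. For the coproduct, the expansion of $\prod_i (\xi_{\sigma(i)} \tensor \Unit + \Unit \tensor \xi_{\sigma(i)})$ is valid because factors landing in different tensor slots commute while the relative order within each slot is preserved, and your reparametrization of $\sigma$ by the image $A = \sigma(I)$ together with the two bijections correctly produces the factor $k!\,(n-k)!$, which cancels against $\binom{n}{k}/n!$ after summing over the $\binom{n}{k}$ choices of $I$. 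Two minor remarks: the lemma is phrased on $\xi_1 \tensor \cdots \tensor \xi_n$ because the paper's convention is to precompose all such maps with the symmetrizer $\Symmetrizer$, which your computation on $\xi_1 \cdots \xi_n$ implicitly covers; and your preliminary appeal to ``uniqueness of the extension'' to argue $z$-independence is dispensable, since the explicit formulas you derive are manifestly independent of $z$ anyway.
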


We need a topology on the tensor product in
\eqref{Hopf:CoproductFormula}, for which we take again the projective
tensor product. The continuity of the two maps is now easy to
prove. In fact, it does not refer to the Lie structure at all and
holds in full generality:
\begin{proposition}
    \label{Prop:Hopf:CoproductContinuity}%
    Let $V$ be a locally convex vector space and let $R \geq 0$. For
    every continuous seminorm $p$ and all $x \in
    \widehat{\Sym}_R^\bullet(V)$ the following estimates hold:
    \begin{equation}
        \label{Hopf:AntipodeContinuity}
        p_R \left(S(x)\right)
        \leq
        p_R (x)
    \end{equation}
    and
    \begin{equation}
        \label{Hopf:CoproductContinuity}
        (p_R \tensor p_R)
        \left(\ocoproduct(x)\right)
        \leq
        (2 p)_R (x).
    \end{equation}
\end{proposition}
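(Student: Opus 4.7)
The plan is to exploit the explicit formulas from Lemma~\ref{Thm:Hopf:Formulas} and to extend both $S$ and $\ocoproduct$ from $\Sym^\bullet(V)$ to the full tensor algebra by precomposing with $\Symmetrizer$, exactly as was done for $\ostar_z$ in Section~\ref{sec:LCTopolgy}. It then suffices to prove the two estimates on factorizing tensors $\xi_1 \tensor \cdots \tensor \xi_n$, since the familiar infimum argument for the projective tensor product, carried out explicitly in the proof of Proposition~\ref{Prop:LCAna:Continuity1}, will propagate the inequalities to all of $\Tensor_R^\bullet(V)$, and continuity then extends everything to the completion $\widehat{\Sym}_R^\bullet(V)$.

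For the antipode estimate \eqref{Hopf:AntipodeContinuity} the argument is essentially immediate. On the degree-$n$ part, $S$ extended by $\Symmetrizer$ coincides with $(-1)^n \Symmetrizer_n$, so the contractivity \eqref{eq:ContinuityForSn} of the symmetrizer gives $p^n(S(v)) \leq p^n(v)$ for every homogeneous $v$; multiplying by $n!^R$ and summing over the grading yields $p_R(S(x)) \leq p_R(x)$ without any further work.

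The interesting case is the coproduct. Starting from \eqref{Hopf:CoproductFormula}, applying the triangle inequality for the projective tensor seminorm, and bounding each symmetric factor by \eqref{eq:ContinuityOfSymmetricProduct} via $p^{|I|}(\xi_I) \leq \prod_{i \in I} p(\xi_i)$, a direct computation on a factorizing tensor produces
\begin{equation*}
(p_R \tensor p_R)\bigl(\ocoproduct(\xi_1 \tensor \cdots \tensor \xi_n)\bigr)
\leq
\sum_{k=0}^n \binom{n}{k}\, k!^R\, (n-k)!^R\, p(\xi_1) \cdots p(\xi_n),
\end{equation*}
the binomial factor counting the $\binom{n}{k}$ subsets $I$ of size $k$. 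The combinatorial heart of the proof is then the uniform bound
\begin{equation*}
\sum_{k=0}^n \binom{n}{k}\, k!^R\, (n-k)!^R
=
n!^R \sum_{k=0}^n \binom{n}{k}^{1-R}
\leq
2^n\, n!^R,
\end{equation*}
which dissolves into two easy cases: for $0 \leq R \leq 1$ one has $\binom{n}{k}^{1-R} \leq \binom{n}{k}$ so the sum is at most $2^n$, while for $R \geq 1$ one has $\binom{n}{k}^{1-R} \leq 1$ so the sum is at most $n+1$, which is bounded by $2^n$ for $n \geq 1$ (the $n=0$ case being trivial). Multiplying through produces exactly $(2p)_R$ applied to the factorizing tensor of degree $n$ on the right-hand side.

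The only genuine obstacle is the combinatorial inequality above, and it collapses under the $R \leq 1$ versus $R \geq 1$ case split. The rest of the proof is routine: the projective-tensor-product infimum argument promotes both estimates from factorizing tensors to all of $\Tensor_R^\bullet(V)$, and continuous extension lifts them to $\widehat{\Sym}_R^\bullet(V)$, completing the proposition.
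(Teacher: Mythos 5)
Your proposal is correct and follows essentially the same route as the paper: extend $S$ and $\ocoproduct$ by precomposing with $\Symmetrizer$, verify both estimates on factorizing tensors, and promote them via the infimum argument and continuity. The only (cosmetic) difference is in the combinatorial step for the coproduct, where your case split on $R \leq 1$ versus $R \geq 1$ can be avoided: the paper simply bounds each summand by $|I|!^R\,(n-|I|)!^R \leq n!^R$, which holds for all $R \geq 0$ since $k!\,(n-k)! \leq n!$, and then counts the $2^n$ subsets.
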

\begin{proof}
    We use the extension to the whole tensor algebra by symmetrizing
    beforehand. Inequality \eqref{Hopf:AntipodeContinuity} is clear
    on factorizing tensors and extends to all tensors by the infimum
    argument.  To get the estimate~\eqref{Hopf:CoproductContinuity},
    we compute it on factorizing tensors:
    \begin{align*}
        (p_R \tensor p_R)
        \left(
            \ocoproduct\left(
                \xi_1 \tensor \cdots \tensor \xi_n
            \right)
        \right)
        &=
        (p_R \tensor p_R)
        \left(
            \sum\limits_{
              I \subseteq
              \{1, \ldots, n\}
            }
            \xi_I
            \tensor
            \xi_1 \cdots
            \widehat{\xi_I}
            \cdots \xi_n
        \right)
        \\
        &\leq
        \sum\limits_{
          I \subseteq
          \{1, \ldots, n\}
        }
        |I|!^R (n - |I|)!^R
        p^{|I|} \left( \xi_I \right)
        p^{n - |I|}
        \left(
            \xi_1 \cdots \widehat{\xi_I} \cdots \xi_n
        \right)
        \\
        &\leq
        \sum\limits_{
          I \subseteq
          \{1, \ldots, n\}
        }
        |I|!^R (n - |I|)!^R
        p(\xi_1) \cdots p(\xi_n)
        \\
        &\leq
        \sum\limits_{
        	I \subseteq
        	\{1, \ldots, n\}
        }
        n!^R
        p(\xi_1) \cdots p(\xi_n)
        \\
        & =
        2^n n!^R
        p(\xi_1) \cdots p(\xi_n)
        \\
        &=
        (2p)_R \left(
            \xi_1 \tensor \cdots \tensor \xi_n
        \right).
    \end{align*}
    This extends to all tensors by the infimum argument and restricts
    to the symmetric algebra afterwards.
\end{proof}

Since the continuity of the unit and the counit $\epsilon = \pi_0$ is
clear by the definition of our topology, we have the following result.
\begin{proposition}
    \label{Prop:Hopf:ContinuousHopf}%
    Let $\lie{g}$ be an AE-Lie algebra and $z \in \mathbb{C}$. Then,
    if $R \geq 1$, $\widehat{\Sym}_R^\bullet (\lie{g})$ is a
    topological Hopf algebra. The same holds for
    $\widehat{\Sym}_{1^-}^{\bullet}(\lie{g})$, if $\lie{g}$ is a
    nilpotent locally convex Lie algebra with continuous Lie bracket.
\end{proposition}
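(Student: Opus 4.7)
The plan is simply to assemble the ingredients we have already collected: a topological Hopf algebra structure on $\widehat{\Sym}_R^\bullet(\lie{g})$ amounts to joint continuity of the five structure maps (unit, multiplication $\star_z$, counit, coproduct, antipode) plus verification of the usual Hopf algebra axioms, and all of these pieces are now at hand. The algebraic axioms themselves require no extra work: on $\Sym^\bullet(\lie{g})$, the coproduct $\coproduct$, counit $\epsilon$, and antipode $S$ are the pullback under $\mathfrak{q}_z$ of the corresponding classical structure on $\algebra{U}(\lie{g}_z)$ (independently of $z$, as recalled in Lemma~\ref{Thm:Hopf:Formulas}), so every Hopf algebra identity holds on the dense subalgebra $\Sym^\bullet(\lie{g})$ and extends to $\widehat{\Sym}_R^\bullet(\lie{g})$ by continuity.

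First I would collect the continuity of the multiplication: for $R\ge 1$ and arbitrary AE-Lie algebras this is Proposition~\ref{Prop:LCAna:Continuity1}~\refitem{item:LCAna:Continuity1}, which gives the estimate $p_R(x\star_z y)\le (cq)_R(x)(cq)_R(y)$ with $c=32(|z|+1)$; for the nilpotent projective limit $R\longrightarrow 1^-$ the analogous estimate is Proposition~\ref{proposition:Nilpot:ProjLimit}~\refitem{item_Nilpot:SEinsMinus}. The continuity of the antipode and coproduct is the content of Proposition~\ref{Prop:Hopf:CoproductContinuity}: the estimates $p_R(S(x))\le p_R(x)$ and $(p_R\tensor p_R)(\coproduct(x))\le (2p)_R(x)$ hold on every $\widehat{\Sym}_R^\bullet(V)$, so in particular on the nilpotent projective limit as well, since the constants on both sides are uniform in $R$.

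The remaining maps are the unit $\Unit\colon\mathbb{K}\longrightarrow \widehat{\Sym}_R^\bullet(\lie{g})$ and the counit $\epsilon=\pi_0\colon\widehat{\Sym}_R^\bullet(\lie{g})\longrightarrow\mathbb{K}$. The unit is the inclusion of scalars into the degree-zero component, which is continuous because $p_R$ restricted to $\Sym^0(\lie{g})=\mathbb{K}$ reduces to the absolute value. The counit is the projection onto the zero-th component, continuous by Proposition~\ref{proposition:Projections}~\refitem{item:ComponentProjectionsContinuous}. Both continuity statements are manifestly uniform in $R$, hence transfer to $\widehat{\Sym}_{1^-}^\bullet(\lie{g})$.

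No step is really a genuine obstacle, so there is no hard core to the proof; the only subtlety is bookkeeping the two cases simultaneously. In the $R\ge 1$ case one invokes the AE-estimate via Proposition~\ref{Prop:LCAna:Continuity1} and uses that $\widehat{\Sym}_R^\bullet(\lie{g})$ with the projective tensor product topology on tensor squares gives the natural codomain for $\coproduct$; in the nilpotent case one replaces the product continuity by Proposition~\ref{proposition:Nilpot:ProjLimit} and observes that all other estimates already hold $R$-uniformly and thus survive the projective limit. Once everything is continuous on the dense subalgebra $\Sym^\bullet(\lie{g})$, the Hopf algebra identities (associativity, coassociativity, counit, antipode) extend by continuity, completing the proof.
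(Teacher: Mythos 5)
Your proposal is correct and follows essentially the same route as the paper, which states this proposition as a direct assembly of the already-established continuity results: Proposition~\ref{Prop:LCAna:Continuity1} (resp.\ Proposition~\ref{proposition:Nilpot:ProjLimit}) for the product, Proposition~\ref{Prop:Hopf:CoproductContinuity} for antipode and coproduct, and the evident continuity of unit and counit, with the Hopf identities holding on the dense subalgebra $\Sym^\bullet(\lie{g})$ and extending by continuity. Your additional remarks on the $R$-uniformity of the antipode and coproduct estimates in the projective-limit case are a welcome explicit touch but do not change the argument.
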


%
%

\section{Outlook and Open Questions}
\label{sec:outlook}

The two main theorems allow us to make some observations, which also
go beyond deformation quantization. We found a locally convex topology
on the universal enveloping algebra and encountered a special class of
AE(-Lie) algebras. It is worth looking at those two issues more
closely.

\subsection{Asymptotic Estimate Algebras}

The term asymptotic estimate has, to the best of our knowledge, first
been used by Boseck, Czichowski and Rudolph in
\cite{boseck.czichowski.rudolph:1981a}. Their definition of an
AE-algebra differs from ours since not just one but a whole series of
asymptotic estimates has to exist for every seminorm. This series has
to fulfil two technical properties. This is not the case in our
definition, which is, in general, weaker.

Furthermore, in \cite{gloeckner.neeb:2012a}, Glöckner and Neeb use a
property to which they referred as ($*$) for associative
algebras. This was then used in
\cite{bogfjellmo.dahmen.schmedig:2015a} by Bogfjellmo and Dahmen and
Schmedig, who called it the $GN$-property. It is easy to see that it
is equivalent to being AE.

As we already know, there are a lot of examples of these structures,
like $C^*$-algebras, Banach algebras and more generally: locally
multiplicatively convex algebras. Recall that a locally convex algebra
(either associative or Lie) is called \emph{locally multiplicatively
  convex} if there exists a defining system of seminorms such that for
all seminorms $p$ of this system one has
\begin{equation}
    \label{eq:LMCProperty}
    p(x \cdot y) \le p(x)p(y)
\end{equation}
for all $x, y \in \algebra{A}$. Clearly, this implies
\eqref{Intro:AE}. The associative locally multiplicatively convex
algebras have been discussed in detail by Michael
\cite{michael:1952a}. Clearly, finite-dimensional (Lie)
algebras are Banach and hence locally multiplicatively convex.

Also, all nilpotent locally convex (Lie) algebras belong to this
category, since one just needs to take the maximum of a finite number
of seminorms. So far, we did not find an example for an AE(-Lie)
algebra, which was neither locally multiplicatively convex nor
nilpotent and it seems to be a non-trivial question, whether such an
algebra exists at all.
\begin{question}
    Are there AE(-Lie) algebras, which are not already locally
    multiplicatively convex or nilpotent?
\end{question}

In some special cases, however, we can give an answer to this
question: every associative AE algebra admits an entire holomorphic
calculus in the following sense: let $\algebra{A}$ be a complete
associative AE algebra and let
\begin{equation*}
    f \colon
    \mathbb{C}
    \longrightarrow
    \mathbb{C},
    \quad
    z
    \longmapsto
    f(z)
    =
    \sum\limits_{n = 0}^{\infty}
    a_n z^n
\end{equation*}
be an entire function, then for every $x \in \algebra{A}$ and every
continuous seminorm $p$ with asymptotic estimate $q$ we have
\begin{equation}
    \label{eq:HolomorphicCalculus}
    p \left( f(x) \right)
    =
    p \left(
        \sum\limits_{n = 0}^{\infty}
        a_n x^n
    \right)
    \leq
    \sum\limits_{n = 0}^{\infty}
    |a_n| p \left( x^n \right)
    \leq
    \sum\limits_{n = 0}^{\infty}
    |a_n| q(x)^n
    < \infty.
\end{equation}
Thus $f(x) \in \algebra{A}$ is defined for every $x$ and every entire
function $f$, obeying the usual rules of a functional calculus.  In
this sense, $\algebra{A}$ behaves very much like a locally
multiplicatively convex algebra. If in addition, $\algebra{A}$ is
commutative and Fr\'echet, we get an answer to our question by using a
result \cite{mitiagin.rolewicz.zelazko:1962a} due to Mitiagin,
Rolewicz and \.Zelazko. They showed that an associative, commutative
Fréchet algebra admitting an entire calculus \emph{is} actually
locally multiplicatively convex.

For non-commutative algebras, the situation is different. There are
associative Fr\'echet algebras admitting an entire holomorphic
calculus which are not locally multiplicatively convex. In
\cite{zelazko:1994a}, \.Zelazko gave an example of such an
algebra. However, his example is also not AE.

In the Lie case, not much seems to be known about the relation between
AE-Lie algebras and locally multiplicatively convex Lie algebras.

%
%

\subsection{Another Topology on the Universal Enveloping Algebra}
\label{subsec:AnotherTopologyUniversalEnvelopingAlgebra}

We also imposed a locally convex topology on the universal enveloping
algebra $\mathcal{U}(\lie{g})$, since we can just pull back the
topology on $\Sym_R^{\bullet}(\lie{g})$ with the
Poincar\'e-Birkhoff-Witt isomorphism.  It is not the only one on
$\mathcal{U}(\lie{g})$ which is reasonable: in
\cite{pflaum.schottenloher:1998a} Schottenloher and Pflaum mention
another locally convex topology in the case of a finite-dimensional
Lie algebra, which we will call $\tau$ for now. They take the coarsest
locally convex topology, such that all finite-dimensional
representations of $\lie{g}$ extend to continuous algebra
homomorphisms. This topology is in fact even locally multiplicatively
convex and therefore different from ours. It is easy to show, that our
topology is finer.
\begin{proposition}
    \label{Thm:LCAna:ContinuousRepresentations}%
    Let $R \geq 1$ and $\algebra{U}_R(\lie{g})$ the universal
    enveloping algebra of a finite-dimensional Lie algebra $\lie{g}$,
    then the $\Sym_R$-topology is strictly finer than $\tau$.
\end{proposition}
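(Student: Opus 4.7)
The plan splits into two claims: first that the $\Sym_R$-topology is finer than $\tau$, and then that the inclusion is strict. Both are comparatively short, given the machinery already at hand.

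For the first part, recall that $\tau$ is by construction the coarsest locally convex topology on $\algebra{U}(\lie{g})$ for which every finite-dimensional representation $\phi \colon \lie{g} \longrightarrow \mathrm{End}(V)$ extends to a continuous unital algebra homomorphism. Since $\mathrm{End}(V)$ is finite-dimensional and Banach, it is in particular an associative AE-algebra, and any $\phi$ is automatically a continuous Lie algebra homomorphism. Corollary~\ref{Coro:LCAna:ContinuousRepresentations} applied at $z = 1$ then yields continuity of the induced algebra homomorphism $\Phi \colon \algebra{U}_R(\lie{g}) \longrightarrow \mathrm{End}(V)$ with respect to the $\Sym_R$-topology. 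Pulling back operator seminorms along each such $\Phi$ produces $\Sym_R$-continuous seminorms on $\algebra{U}_R(\lie{g})$, so by the universal property defining $\tau$ we obtain the inclusion $\tau \subseteq \Sym_R$.

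For strictness I would exploit the incompatibility with local multiplicative convexity. The topology $\tau$ is locally multiplicatively convex, since each of its generating seminorms is the pull-back of a submultiplicative operator norm along an algebra homomorphism. I claim that the $\Sym_R$-topology on $\algebra{U}_R(\lie{g})$ is not lmc for $R \geq 1$. Indeed, pick any $\xi \in \lie{g}$ and any continuous seminorm $p$ on $\lie{g}$ with $p(\xi) \neq 0$. Using \eqref{eq:QuantizationMap} and PBW one has $\mathfrak{q}(\xi^n) = \xi^{\odot n}$, so under the identification with $\Sym^\bullet(\lie{g})$ the $\star_z$-powers reduce to symmetric powers: $\xi^{\star n} = \xi^n \in \Sym^n(\lie{g})$. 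Hence
\begin{equation*}
    p_R(\xi^{\star n}) \;=\; n!^R\, p(\xi)^n.
\end{equation*}
If the $\Sym_R$-topology were lmc there would exist a continuous submultiplicative seminorm $\tilde{p}$ and a constant $C > 0$ with $p_R \leq C \tilde{p}$, forcing $n!^R p(\xi)^n \leq C\, \tilde{p}(\xi)^n$ for all $n$, which is impossible for $R \geq 1$. Combined with $\tau \subseteq \Sym_R$, this strict failure of local multiplicative convexity on the $\Sym_R$-side shows that the two topologies do not coincide, i.e. $\Sym_R$ is strictly finer than $\tau$.

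The only non-routine step is recognising why one may reduce to a single generator when ruling out local multiplicative convexity; once this observation is made, the super-exponential factor $n!^R$ against $p(\xi)^n$ immediately contradicts any hypothetical submultiplicative bound. I do not expect additional technical obstacles, since the hard continuity estimates have already been done in Section~\ref{sec:LCTopolgy} and the lmc character of $\tau$ is built into its very definition.
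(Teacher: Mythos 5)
Your proposal is correct and follows essentially the same route as the paper: continuity of all finite-dimensional representations (Corollary~\ref{Coro:LCAna:ContinuousRepresentations}) gives $\tau \subseteq \Sym_R$ by the universal property of $\tau$, and strictness comes from the failure of local multiplicative convexity of the $\Sym_R$-topology against the built-in lmc character of $\tau$. The only divergence is in how that failure is certified: the paper invokes the absence of an entire calculus for $R \geq 1$ (via $\exp(\xi) \notin \widehat{\Sym}_R^\bullet(\lie{g})$), whereas you give a direct, self-contained estimate $p_R(\xi^{\star n}) = n!^R\, p(\xi)^n$ against any hypothetical submultiplicative bound — both arguments rest on exactly the same super-exponential growth of powers of a single generator, so this is a presentational rather than a substantive difference.
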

\begin{proof}
    From Corollary~\ref{Coro:LCAna:ContinuousRepresentations} we know
    that if $\lie{g}$ is finite-dimensional, all finite-dimensional
    representations will extend to continuous algebra
    homomorphisms. Since $\tau$ was the coarsest topology such that
    this is the case, the $\Sym_R$-topology must be finer. It is
    strictly finer, since the $\Sym_R$-topology does not have an
    entire calculus for $R \geq 1$ and can hence not be locally
    multiplicatively convex.
\end{proof}

For representations on infinite-dimensional Banach or Hilbert spaces,
the statement is less important, since there one rarely has
norm-continuous representations, but merely strongly continuous ones.
\begin{remark}
    \label{Rem:LCAnaBCHConvergence}
    One could argue, that a topology which is locally multiplicatively
    convex is much more useful than a ``just locally convex'' one. Of
    course, submultiplicativity of the seminorms allows for more
    constructions like an entire calculus, but there are good reasons
    why the $\Sym_R$-topology also has its advantages: first of all,
    it is also defined for infinite-dimensional Lie algebras with
    reasonable functorial properties. Second, the topology build upon
    representations does not allow to have a continuous deformation of
    $\Sym^{\bullet}(\lie{g})$, at least in a similar way: the
    canonical projections $\pi_n \colon \Sym^{\bullet}(\lie{g})
    \longrightarrow \Sym^n(\lie{g})$ from
    Proposition~\ref{proposition:Projections} can be shown to be
    discontinuous with respect to $\tau$. For $z = 1$, $\xi, \eta \in
    \lie{g}$ and $\pi_1$ being the projection onto the Lie algebra
    itself, we get
    \begin{equation}
        \label{eq:piEinsDiscontinuous}
    	\pi_1 \left(
            \exp(\xi) \star \exp(\eta)
    	\right)
    	=
    	\bch{\xi}{\eta},
    \end{equation}
    if we assume the product and the projection to be convergent. With
    respect to $\tau$, the exponential series $\exp(\xi)$ exists for
    all $\xi \in \lie{g}$ and therefore the Baker-Campbell-Hausdorff
    series must exist for all $\xi, \eta \in \lie{g}$.  Since this is
    not the case in a generic Lie algebra, we get a contradiction.
\end{remark}

%
%

\subsection{Possible Applications and Generalizations}
\label{subsec:GeneralizeStuff}

First we note that it is just a matter of collecting signs to extend
our results to super Lie algebras. Here the analysis part is robust
enough as the signs will not alter the estimates. We do not formulate
the details here.

More interesting is the application of our result to deformation
quantization, say in finite dimensions for simplicity. While the Gutt
star product itself may be seen as yet rather simple, it encodes
quantizations of coadjoint orbits: from
\cite{cahen.gutt.rawnsley:1996a} one knows that the Gutt star product
does not just restrict to coadjoint orbits in general, even though
there are interesting exceptional cases. For these cases, we
immediately get convergent star products on coadjoint orbits:
\begin{theorem}
    \label{theorem:QuantizeCoadjointOrbits}%
    Let $G$ be a finite-dimensional Lie group with Lie algebra
    $\lie{g}$ and $\mathcal{O} \subseteq \lie{g}^*$ a coadjoint orbit
    to which the Gutt star product is tangential.  Then the Gutt star
    product restricts to a continuous star product on $\mathcal{O}$.
\end{theorem}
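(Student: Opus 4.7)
The plan is to reduce this theorem to the continuity statements of Main Theorem~\ref{theorem:MainTheoremI} by exploiting the fact that, in finite dimensions, $\Sym^\bullet(\lie{g}) \cong \Pol^\bullet(\lie{g}^*)$ and that ``tangential'' is precisely the algebraic condition allowing the star product to descend to a quotient.

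First, I would fix some $R \geq 1$ and work inside the locally convex algebra $(\widehat{\Sym}_R^\bullet(\lie{g}), \star_z)$ supplied by Main Theorem~\ref{theorem:MainTheoremI}, which is legitimate because $\lie{g}$ is finite-dimensional and hence trivially an AE-Lie algebra. Let $\mathcal{J}(\mathcal{O}) \subseteq \Sym^\bullet(\lie{g})$ denote the vanishing ideal of $\mathcal{O}$, i.e.\ the polynomial functions on $\lie{g}^*$ that vanish on $\mathcal{O}$. The tangentiality assumption says exactly that $\mathcal{J}(\mathcal{O})$ is a two-sided ideal for $\star_z$ in the purely algebraic sense, so the quotient $\Sym^\bullet(\lie{g})/\mathcal{J}(\mathcal{O}) \cong \Pol^\bullet(\mathcal{O})$ inherits a star product $\star_z^{\mathcal{O}}$ from $\star_z$.

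Next I would upgrade this to the completion. Using Proposition~\ref{proposition:Projections}~\refitem{item:PointsArePoints}, the evaluation functional $\delta_\varphi \colon \widehat{\Sym}_R^\bullet(\lie{g}) \to \mathbb{K}$ is continuous for every $\varphi \in \lie{g}^* = \lie{g}'$, so
\begin{equation*}
    \widehat{\mathcal{J}}(\mathcal{O})
    := \bigcap_{\varphi \in \mathcal{O}} \ker \delta_\varphi
    \subseteq \widehat{\Sym}_R^\bullet(\lie{g})
\end{equation*}
is a closed subspace extending $\mathcal{J}(\mathcal{O})$. Since $\star_z$ is continuous by Main Theorem~\ref{theorem:MainTheoremI} and $\mathcal{J}(\mathcal{O})$ is already a two-sided ideal for $\star_z$ on the dense subspace $\Sym^\bullet(\lie{g})$, one argues by continuity that $\widehat{\mathcal{J}}(\mathcal{O})$ is a two-sided ideal for the extended product. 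Concretely, left multiplication by any fixed $x \in \widehat{\Sym}_R^\bullet(\lie{g})$ is continuous, and it maps the dense subspace $\mathcal{J}(\mathcal{O})$ into $\widehat{\mathcal{J}}(\mathcal{O})$, hence it maps the closure $\widehat{\mathcal{J}}(\mathcal{O})$ into $\widehat{\mathcal{J}}(\mathcal{O})$; the same for right multiplication. Consequently the quotient $\widehat{\Sym}_R^\bullet(\lie{g})/\widehat{\mathcal{J}}(\mathcal{O})$ is a Hausdorff locally convex algebra with a continuous star product.

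Finally, I would interpret this quotient as a function algebra on $\mathcal{O}$: by construction the quotient map factors through the evaluation map $\widehat{\Sym}_R^\bullet(\lie{g}) \to \mathbb{K}^{\mathcal{O}}$, $x \mapsto (\delta_\varphi(x))_{\varphi \in \mathcal{O}}$, whose kernel is exactly $\widehat{\mathcal{J}}(\mathcal{O})$. Thus the quotient is realized as a space of (continuous) functions on $\mathcal{O}$ containing all polynomial functions $\Pol^\bullet(\mathcal{O})$, and the induced product restricts to the tangential Gutt star product there. Equipping $\Pol^\bullet(\mathcal{O})$ with the quotient topology coming from the $\Sym_R$-topology then makes $\star_z^{\mathcal{O}}$ continuous, which is the claim.

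The main obstacle I anticipate is verifying carefully that $\widehat{\mathcal{J}}(\mathcal{O})$ really is the closure of the algebraic ideal $\mathcal{J}(\mathcal{O})$, or at least that it is an ideal for the extended product; once this is in hand, the rest is a formal consequence of the joint continuity of $\star_z$. A secondary subtlety is ensuring that the quotient topology is genuinely Hausdorff, which here is automatic because $\widehat{\mathcal{J}}(\mathcal{O})$ has been defined as an intersection of kernels of continuous functionals and is therefore closed by construction.
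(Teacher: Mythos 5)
Your proposal uses exactly the same ingredients as the paper's proof: the identification $\Sym^\bullet(\lie{g}) \cong \Pol^\bullet(\lie{g}^*)$ in finite dimensions, the continuity of the evaluation functionals from Proposition~\ref{proposition:Projections}, \refitem{item:PointsArePoints}, to see that the vanishing ideal is closed, tangentiality to see that it is a two-sided $\star_z$-ideal, and the continuity of $\star_z$ from Main Theorem~\ref{theorem:MainTheoremI} to conclude that the quotient carries a Hausdorff locally convex topology with continuous product. The one substantive difference is the order of operations: the paper quotients $\Sym_R^\bullet(\lie{g})$ by the closed ideal $\mathcal{J}(\mathcal{O})$ first (where the ideal property is a purely algebraic consequence of tangentiality and needs no limiting argument) and completes afterwards, whereas you complete first and then try to quotient $\widehat{\Sym}_R^\bullet(\lie{g})$ by $\widehat{\mathcal{J}}(\mathcal{O}) = \bigcap_{\varphi \in \mathcal{O}} \ker \delta_\varphi$.

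That reordering creates the gap you yourself flag: your argument that $\widehat{\mathcal{J}}(\mathcal{O})$ is a two-sided ideal for the extended product assumes that $\mathcal{J}(\mathcal{O})$ is dense in $\widehat{\mathcal{J}}(\mathcal{O})$, i.e.\ that the intersection of kernels of evaluations in the completion coincides with the closure of the algebraic vanishing ideal. This is not obvious and is not needed. The repair is either to follow the paper's order (quotient, then complete), or to replace $\widehat{\mathcal{J}}(\mathcal{O})$ by the closure $\overline{\mathcal{J}(\mathcal{O})}$ in $\widehat{\Sym}_R^\bullet(\lie{g})$: for fixed $x$ in the completion, left and right multiplication by $x$ are continuous and map the dense subalgebra $\Sym^\bullet(\lie{g})$-translates of $\mathcal{J}(\mathcal{O})$ into $\mathcal{J}(\mathcal{O})$, hence map $\overline{\mathcal{J}(\mathcal{O})}$ into itself, so $\overline{\mathcal{J}(\mathcal{O})}$ is automatically a closed two-sided ideal and the quotient is Hausdorff. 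Since $\overline{\mathcal{J}(\mathcal{O})} \subseteq \widehat{\mathcal{J}}(\mathcal{O})$, the evaluations at points of $\mathcal{O}$ still descend to the quotient, which is all that is needed to realize it as a function algebra on the orbit containing $\Pol^\bullet(\mathcal{O})$. With that adjustment your argument is correct and matches the paper's.
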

\begin{proof}
    Here we use now that $\Sym_R(\lie{g})$ can be identified with the
    polynomial functions $\Pol(\lie{g}^*)$ on $\lie{g}^*$. Now since
    the evaluation functionals at points in $\lie{g}^*$ are continuous
    by Proposition~\ref{proposition:Projections},
    \refitem{item:PointsArePoints}, the vanishing ideal of the orbit
    is a closed subspace of $\Sym_R(\lie{g})$. Since we assume that
    the Gutt star product is tangential, it is also a two-sided ideal
    with respect to $\star_z$. Thus, on the one hand, the quotient
    inherits the product and, on the other hand, it inherits a
    Hausdorff locally convex topology for which the product is
    continuous. We then can complete again to get a holomorphic
    deformation of those functions on the orbit which are obtained by
    restricting functions on $\lie{g}^*$ in the completion of
    $\Pol^\bullet(\lie{g}^*)$ with respect to the $\Sym_R$-topology to
    the orbit.
\end{proof}

However, even if the Gutt star product is not tangential, there is some
hope to get a star product on coadjoint orbits for which the convergence can
be controlled: in \cite[Thm.~5.2]{bieliavsky.bordemann.gutt.waldmann:2003a}
a construction of a deformed restriction map $\boldsymbol{\iota} = \iota
+ \cdots$ of functions on $\lie{g}^*$ to functions on a coadjoint orbit
$\iota\colon \mathcal{O} \longrightarrow \lie{g}^*$ was given,
provided the Lie algebra $\lie{g}$ is compact and the orbit is
regular. It will be left to a future project to investigate the
behaviour of the deformed restriction map with respect to polynomial
functions and their $\Sym_R$-topology. Beside this fairly general
construction, it seems also plausible to generalize the restriction
procedure to particular other cases.

%
%

\appendix
\section{
	Proofs of the Propositions
	\ref{proposition:PBWGuttDrinfeldBCHWhoElseStarProduct} and
	\ref{proposition:DeformedLieAlgebraStarProduct}
	}
\label{sec:TheProofPBWGDBCH}

Since we used the results from
Proposition~\ref{proposition:PBWGuttDrinfeldBCHWhoElseStarProduct} and
Proposition~\ref{proposition:DeformedLieAlgebraStarProduct},
we need to give a proof that that they hold for any (also
infinite-dimensional) Lie algebra $\lie{g}$. Our analysis is based on
a careful investigation of the BCH series which can be found in the
literature at many places, see e.g. \cite[part 2.8.12]{dixmier:1977a}
or \cite[Chap.~5]{jacobson:1997a}.

To avoid any possible confusion, we define three different star
products.
\begin{definition}
	\label{Def:ThreeStarProducts}
	Let $\lie{g}$ be a Lie algebra and $z \in \mathbb{C}$. Then we define
	the Gutt star product $\star_z$ via \eqref{eq:xikStaretaell}, the
	deformed PBW star product $\widehat{\star}_z$ via
	\eqref{eq:DeformedLieIsGuttStar} and the Drinfel'd star product $\ast_z$
	via 	\eqref{eq:DrinfeldIsGuttStar}.
\end{definition}
We need to prove that all of them are identical. Therefore, it is enough to
show that they coincide on $\xi^n \star \eta$ for $\xi, \eta \in \lie{g}$ and
any $n \in \mathbb{N}$, since those terms generate the whole algebra. We get
equality for $\xi_1 \ldots \xi_n \star \eta$ by polarization and
then for $\xi_1 \ldots \xi_n \star \eta_1 \ldots \eta_{\ell}$ by
iteration.

Recall that we have $\star_z = \widehat{\star}_z$ if $z = 1$ from the
construction, since $\mathfrak{q} = \mathfrak{q}_1$. In
Proposition~\ref{proposition:GuttStarOneLinearFactor} we have sketched the
proof of a formula for $\ast_z$:
\begin{equation}
	\xi^k \ast_z \eta
	=
	\sum\limits_{j=0}^k
    \binom{k}{j} z^j B_j^*
    \xi^{k-j}(\ad_{\xi})^j (\eta).
\end{equation}
We first show $\ast_z = \widehat{\star}_z$.
\begin{proposition}
	\label{proposition:App:DrinfeldIsDeformedStar}
	Let $\lie{g}$ be a Lie algebra, $k \in \mathbb{N}$ and $\xi, \eta \in
	\lie{g}$. Then
	\begin{equation}
		\label{eq:App:DrinfeldIsDeformedStar}
		\xi^k \widehat{\star}_z \eta
		=
		\sum\limits_{j=0}^k
	    \binom{k}{j} z^j B_j^*
	    \xi^{k-j}(\ad_{\xi})^j (\eta).
	\end{equation}
\end{proposition}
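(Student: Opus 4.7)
The plan is to unwind the definition of $\widehat{\star}_z$ and verify the resulting identity directly in $\algebra{U}(\lie{g}_z)$. By Definition~\ref{Def:ThreeStarProducts} and the fact that $\mathfrak{q}_z(\xi^k) = \xi^{\odot_z k}$ (the PBW symmetrization of $k$ identical factors collapses to the $k$-fold $\odot_z$-product), Proposition~\ref{proposition:App:DrinfeldIsDeformedStar} is equivalent to
\begin{equation*}
    \xi^{\odot_z k} \odot_z \eta
    =
    \sum_{j=0}^k \binom{k}{j} z^j B_j^*\,
    \mathfrak{q}_z\bigl(\xi^{k-j} (\ad_\xi)^j(\eta)\bigr)
\end{equation*}
inside $\algebra{U}(\lie{g}_z)$; applying $\mathfrak{q}_z^{-1}$ then yields the claim.

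The most economical route is to recognize this as the classical ($z = 1$) identity $\xi^{\odot k} \odot \eta = \sum_{j} \binom{k}{j} B_j^*\, \mathfrak{q}\bigl(\xi^{k-j} (\ad^{\lie{h}}_\xi)^j(\eta)\bigr)$, which holds in $\algebra{U}(\lie{h})$ for any Lie algebra $\lie{h}$ (compare \cite[2.8.12]{dixmier:1977a}), specialized to $\lie{h} = \lie{g}_z$. Since the bracket of $\lie{g}_z$ is $z[\cdot,\cdot]$, one has $\ad^{\lie{g}_z}_\xi = z\,\ad^{\lie{g}}_\xi$, so each $j$-th summand acquires precisely the factor $z^j$ and the displayed identity drops out.

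To give a self-contained derivation, I would prove the classical $z = 1$ case by induction on $k$. The base $k = 0$ is $\eta = B_0^*\eta$. For the inductive step, left-multiply by $\xi$ and expand $\xi \odot \mathfrak{q}(\xi^m \eta_j)$ with $\eta_j = (\ad_\xi)^j(\eta)$ by writing the total symmetrization as an average over the $m+1$ positions at which $\eta_j$ may sit, and then pushing the additional factor of $\xi$ into the rightmost slot via the commutation rule $\xi \odot w - w \odot \xi = [\xi, w]$. The resulting correction is an explicit rational multiple of $\mathfrak{q}(\xi^m \eta_{j+1})$, so each $\mathfrak{q}(\xi^{k+1-j}\eta_j)$ in the induction claim appears with a coefficient built from $B_j^*$ and $B_{j-1}^*$; matching this to $\binom{k+1}{j} B_j^*$ reduces to the Bernoulli recursion encoded by $(1 - e^{-w}) \cdot \frac{w}{1 - e^{-w}} = w$. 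The main obstacle is precisely this combinatorial bookkeeping around the re-symmetrization: the left-multiplication shifts the distribution of insertion positions by one, so the missing boundary position must be rewritten as a commutator, and the coefficients must then be identified with a single Bernoulli number via the recursion. Once this step is carried out, the induction closes, and combining with the one-line rescaling above yields the proposition.
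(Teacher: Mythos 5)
Your setup is sound, and your reduction to the case $z=1$ via $\ad^{\lie{g}_z}_\xi = z\,\ad^{\lie{g}}_\xi$ is a legitimate simplification that the paper does not make explicit (it carries the factors of $z$ through the whole computation). The gap is in the inductive step. Write $\eta_j = (\ad_\xi)^j(\eta)$ and $m = k-j$. Left-multiplying $\mathfrak{q}(\xi^m\eta_j) = \frac{1}{m+1}\sum_{l=0}^{m}\xi^{\odot(m-l)}\odot\eta_j\odot\xi^{\odot l}$ by $\xi$ and comparing with $\mathfrak{q}(\xi^{m+1}\eta_j)$, the missing insertion position gives
\[
\xi\odot\mathfrak{q}(\xi^m\eta_j)
=
\tfrac{m+2}{m+1}\,\mathfrak{q}(\xi^{m+1}\eta_j)
-
\tfrac{1}{m+1}\,\eta_j\odot\xi^{\odot(m+1)},
\]
and the boundary term $\eta_j\odot\xi^{\odot(m+1)}$ is \emph{not} a rational multiple of $\mathfrak{q}(\xi^m\eta_{j+1})$: commuting $\eta_j$ back through $m+1$ factors of $\xi$ produces contributions to $\mathfrak{q}(\xi^{m+1-i}\eta_{j+i})$ for every $i=0,\ldots,m+1$ (already $\eta\odot\xi^{\odot 2} = \xi^{\odot 2}\odot\eta - 2\,\xi\odot\eta_1 + \eta_2$ involves $\eta_2$). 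Consequently the coefficient of $\mathfrak{q}(\xi^{k+1-i}\eta_i)$ in the induction claim receives contributions from all of $B_0^*,\ldots,B_i^*$, not just from $B_i^*$ and $B_{i-1}^*$, and what must be verified is a convolution-type Bernoulli identity rather than the defining recursion of $w/(1-\E^{-w})$. That identity is precisely where the content of the proposition sits, and your sketch does not supply it.

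For comparison, the paper avoids the induction on $k$ altogether: it applies $\mathfrak{q}_z$ to the right-hand side, expands everything in the ordered monomials $\xi^{\odot(k-s)}\odot\eta\odot\xi^{\odot s}$ using the two elementary identities for $\mathfrak{q}_z(\xi^m\zeta)$ and $\mathfrak{q}_z((z\ad_\xi)^n\eta)$, and then proves that the resulting coefficient $\mathcal{K}(k,s)$ equals $\delta_{s,0}$ by an induction on $s$ which uses Carlitz's identity $(-1)^k\sum_{j}\binom{k}{j}B_{m+j} = (-1)^m\sum_{i}\binom{m}{i}B_{k+i}$ --- a genuinely stronger input than the basic Bernoulli recursion. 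If you want to keep your route, the clean way to close it is a generating-function argument rather than an induction on $k$: from $\E^{t\xi}\odot\eta = \E^{t\ad_\xi}(\eta)\odot \E^{t\xi}$ and $\mathfrak{q}(\E^{t\xi}\eta) = \frac{\E^{t\ad_\xi}-1}{t\ad_\xi}(\eta)\odot \E^{t\xi}$ (both formal power series identities in $t$, valid order by order in any Lie algebra) one gets $\E^{t\xi}\odot\eta = \mathfrak{q}\bigl(\E^{t\xi}\,\tfrac{t\ad_\xi}{1-\E^{-t\ad_\xi}}(\eta)\bigr)$ and reads off the claim by comparing coefficients of $t^k$. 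As written, however, your inductive step does not close.
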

\begin{proof}
	We need to show
	\begin{equation*}
		\mathfrak{q}_z \left(
			\sum\limits_{j=0}^k
        	\binom{k}{j} z^j B_j^*
        	\xi^{k-j}(\ad_{\xi})^j (\eta)
		\right)
		=
		\xi^{k} \cdot \eta
	\end{equation*}
	in $\algebra{U}(\lie{g}_z)$. We divide it into two lemmata.
	\begin{lemma}\label{App:Lemma1}
		Let $\xi, \eta \in \mathfrak{g}$ and $k\in \mathbbm{N}$.
		Then we have
		\begin{equation*}
			\mathfrak{q}_z \left(\sum\limits_{n=0}^k z^n
			\binom{k}{n} B_n^* \xi^{k-n}
			\left(\ad_{\xi}\right)^n(\eta)\right)
			=
			\sum\limits_{s=0}^k\mathcal{K}(k,s)
			\xi^{k-s} \cdot \eta \cdot \xi^s
		\end{equation*}
		with
		\begin{equation*}
			 \mathcal{K}(k,s)
			 =
			 \frac{1}{k + 1} \sum\limits_{n=0}^k
			 \binom{k+1}{n} B_n^*
			 \sum\limits_{j=0}^n
			 (-1)^j \binom{n}{j}
			 \sum\limits_{\ell=0}^{k-n} \delta_{s, \ell+j}
		\end{equation*}
	\end{lemma}
	\begin{subproof}
		We need the identities
		\begin{align*}
			\mathfrak{q}_z\left( \xi^k \eta \right)
			& =
			\frac{1}{k + 1}
			\sum\limits_{j = 0}^k
			\xi^{k-l} \cdot \eta \cdot \xi^k
		\intertext{and}
			\mathfrak{q}_z
			\left(
				\left(
					z^n \ad_{\xi}
				\right)^k
				(\eta)
			\right)
			& =
			\sum\limits_{j = 0}^k
			(-1)^j \binom{k}{j}
			\xi^{k-j} \cdot \eta \cdot \xi^j
		\end{align*}
		which are easy to check. Since the map $\mathfrak{q}_z$ is linear,
		we can pull out the constants and get
		\begin{equation*}
			\mathfrak{q}_z \left( \sum\limits_{n=0}^k
			z^n \binom{k}{n} B_n^* \xi^{k-n}
			\left(\ad_{\xi}\right)^k(\eta)\right)
			=
			\sum\limits_{n=0}^k
			\binom{k}{n} B_n^*
			\mathfrak{q}_z\left(z^n \xi^{k-n}
			\left(\ad_{\xi}\right)^k(\eta)\right).
		\end{equation*}
		Now we just have to use the two equalities to get
		\begin{align*}
			\sum\limits_{n = 0}^k
			\binom{k}{n} B_n^*
			\mathfrak{q}_z(z^n \xi^{k-n}
			\left(\ad_{\xi}\right)^k(\eta))
			& =
			\sum\limits_{n = 0}^k
			\binom{k}{n}
			\frac{B_n^*}{k - n + 1}
			\sum\limits_{\ell = 0}^{k - n}
			\xi^{k - n - \ell} \cdot
			\left(
				\sum\limits_{j = 0}^n
				(-1)^j \binom{n}{j}
				\xi^{n-j} \cdot \eta \cdot \xi^j
			\right)
			\cdot \xi^{\ell}
			\\
			& =
			\frac{1}{k + 1}
			\sum\limits_{n = 0}^k
			\binom{k + 1}{n} B_n^*
			\sum\limits_{j = 0}^n
			(-1)^j \binom{n}{j}
			\sum\limits_{\ell = 0}^{k - n}
			\xi^{k - \ell - j}
			\cdot \eta \cdot
			\xi^{\ell + j}
		\end{align*}
		and collect the terms for which we have $\ell + j = s$.
	\end{subproof}
	Now we have to show the following statement:
	\begin{lemma}\label{App:Lemma2}
		Let $\mathcal{K}(k,s)$ be defined as in Lemma \ref{App:Lemma1},
		then we have for all $k\in \mathbbm{N}$
		\begin{equation*}
			\mathcal{K}(k, s)
			=
			\begin{cases}
				1 & s=0 \\
				0 & \text{else}
			\end{cases}
		\end{equation*}
	\end{lemma}
	\begin{subproof}
		We need some standard identities on binomial coefficients and
		the recursive definition of the Bernoulli numbers. Besides that, we
		have to use the equality
		\begin{equation*}
			(-1)^k
			\sum\limits_{j = 0}^k
			\binom{k}{j} B_{m + j}
			=
			(-1)^m
			\sum\limits_{i = 0}^m
			\binom{m}{i} B_{k + i}
		\end{equation*}
		which was proven by Carlitz in \cite{carlitz:1968a}. We take $k \in
		\mathbb{N}$ and prove the lemma by induction over $s$. It is
		straightforward to see that	$\mathcal{K}(k,0) = 1$. The induction
		starts at $s = 1$:
		\begin{align*}
			\mathcal{K}(k,1)
			& =
			\underbrace{
				\frac{1}{k + 1}
			}_{n=0}
			-
			\underbrace{k B_k^*}_{n=k}
			+
			\frac{1}{k + 1}
			\sum\limits_{n = 1}^{k - 1}
			\binom{k + 1}{n} B_n^*
			\left(
				1 + (-1) \binom{n}{1}
			\right)
			\\
			& =
			\underbrace{
				\frac{1}{k+1}
				\sum\limits_{n = 0}^{k - 1}
				\binom{k + 1}{n}
				B_n^*
			}_{ = 1 - B_k^*}
			-
			\frac{1}{k + 1}
			\sum\limits_{n = 0}^k
			\binom{k + 1}{n}
			n B_n^*
			\\
			& =
			1 - B_k^* - k - 1
			+
			\sum\limits_{n = 0}^{k - 1}
			\binom{k}{n} B_n^*
			-
			B_k^*
			\\
			& =
			0.
		\end{align*}
		Now we can do the step $s \rightarrow s + 1$. It is sufficient to
		prove $\mathcal{K}(k,s+1) - \mathcal{K}(k,s) = 0$. For this purpose,
		we rewrite $\mathcal{K}(k,s)$ and we find
		\begin{align*}
			\mathcal{K}(k,s)
			& =
			\frac{1}{k + 1}
			\left[
				1 +
				\sum\limits_{n = k + 1 - s}^s
				\binom{k + 1}{n}
				B_n (-1)^{k - s}
				\binom{n - 1}{k - s}
			\right.
			\\
			& \quad +
			\left.
				\sum\limits_{n = s + 1}^k
				\binom{k + 1}{n} B_n
				\left(
					(-1)^{k - s}
					\binom{n - 1}{k - s}
					+
					(-1)^{n + s}
					\binom{n - 1}{s}
				\right)
			\right].
		\end{align*}
		We get rid of the factor in front by multiplying with $(k + 1)$ and
		finally get after some combinatorial manipulations
		\begin{align*}
			(k+1) \left(
				\mathcal{K}(k,s) - \mathcal{K}(k,s + 1)
			\right)
			& =
			\sum\limits_{n = k - s}^k
			\binom{k + 1}{s + 1}
			\binom{s + 1}{n + s - k}
			B_n (-1)^{k - s}
			\\
			& \quad
			+
			\sum\limits_{n = s + 1}^k
			\binom{k + 1}{s + 1}
			\binom{k - s}{n - s - 1}
			B_n (-1)^{n + s}.
		\end{align*}
		Rewriting it and dividing by $\binom{k + 1}{s + 1}$ (which is not 0
		for $s \leq k$), two sums vanish by the Carlitz identity. We are left
		with
		\begin{equation*}
			(-1)^{s + 1} B_{k + 1}
			-
			(-1)^{k - s} B_{k + 1}
			=
			-(-1)^s
			B_{k + 1}
			\left(
				1 + (-1)^k
			\right)
			=
			0
		\end{equation*}
		since the bracket will be zero if $k$ is odd and $B_{k + 1} = 0$
		if $k$ is even.
	\end{subproof}
	From those two lemmas, the proposition is clear.
\end{proof}
Now, we only have to prove $\star_z = \widehat{\star}_z$ for $z \neq 1$. But we
know that
\begin{equation}
	\xi^k \widehat{\star}_z \eta
	=
	\sum\limits_{j=0}^k
    \binom{k}{j} z^j B_j^*
    \xi^{k-j}(\ad_{\xi})^j (\eta)
\end{equation}
and also
\begin{equation}
	\xi^k \star_1 \eta
	=
	\mathfrak{q}^{-1}
	\left(
		\mathfrak{q}\left( \xi^k \right)
		\cdot
		\mathfrak{q}(\eta)
	\right)
	=
	\sum\limits_{j=0}^k
    \binom{k}{j} B_j^*
    \xi^{k-j}(\ad_{\xi})^j (\eta).
\end{equation}
We finally get from Equation \eqref{eq:xikStaretaell}
\begin{equation}
	\xi^k \star_z \eta
	=
	\sum\limits_{j=0}^k
    \binom{k}{j} z^j B_j^*
    \xi^{k-j}(\ad_{\xi})^j (\eta).
\end{equation}

%
%


%
%


%
%

\end{document}